\documentclass[11pt,a4paper,reqno]{article}
\usepackage{amsmath}
\usepackage{graphicx}
\usepackage{comment}
\usepackage{amsfonts}
\usepackage{amssymb}
\usepackage{amsthm}
\usepackage[margin=2.5cm]{geometry}
\usepackage{indentfirst}
\usepackage[all]{xy}
\usepackage[colorlinks=true,linkcolor=cyan]{hyperref}
\usepackage{mathrsfs} 
\usepackage{tikz-cd}
\usetikzlibrary{graphs,decorations.pathmorphing,decorations.markings}
\usepackage{enumitem}
\usepackage[title]{appendix}
\setitemize{noitemsep,topsep=0pt,parsep=0pt,
partopsep=0pt,itemindent=12pt,leftmargin=0pt}
\makeatletter
\newcommand{\subjclass}[2][2020]{%
 \let\@oldtitle\@title%
 \gdef\@title{\@oldtitle\footnotetext{#1 \emph{Mathematics subject classification.} #2}}%
}
\newcommand{\keywords}[1]{%
 \let\@@oldtitle\@title%
 \gdef\@title{\@@oldtitle\footnotetext{\emph{Key words and phrases.} #1.}}%
}
\makeatother


\newtheorem{theorem}{Theorem}[section]
\newtheorem{lemma}[theorem]{Lemma}
\newtheorem{proposition}[theorem]{Proposition}

\theoremstyle{definition}
\newtheorem{definition}[theorem]{Definition}

\theoremstyle{remark}
\newtheorem{remark}[theorem]{Remark}

\newcommand{\build}[3]{\mathrel{\mathop{\kern 0pt#1}\limits_{#2}^{#3}}}

\newcommand\U{{\mathrm U}}
\newcommand\Z{{\mathbb Z}}

\newcommand\R{\mathbb{R}}
\newcommand\C{\mathbb{C}}
\newcommand\Tr{\mathrm{Tr}}
\newcommand\vol{\mathrm{vol}}

\newcommand\Hom{\mathrm{Hom}}

\newcommand\Gbb{{\mathbb{G}}}
\newcommand\Hol{{\mathrm{Hol}}}

\newcommand\mesh{{\mathrm{mesh}}}

\title{Universality of two-dimensional Markovian holonomy fields}

\author{Thibaut Lemoine\thanks{Universit\'e de Strasbourg, CNRS, UMR 7501 -- Institut de Recherche Math\'ematique Avanc\'ee, 7 rue Ren\'e Descartes, 67000 Strasbourg, France. thibaut.lemoine@math.unistra.fr} \hspace{0.1cm}and Elias Nohra\thanks{Sorbonne Universit\'e, Universit\'e Paris Cit\'e, CNRS, Laboratoire de Probabilit\'es, Statistique et Mod\'elisation, LPSM, F-75005, France. Email: elias.nohra@sorbonne-universite.fr}}

\keywords{two-dimensional Yang--Mills theory; lattice gauge theory;
Markovian holonomy fields; L\'evy processes on compact Lie groups;
Wilson loops; state-sum formulas; scaling limits}

\subjclass{Primary 60B15; Secondary 81T13, 60G51, 60F17, 81T27}

\begin{document}

\maketitle

\begin{abstract}
We prove a universality theorem for a broad class of two-dimensional gauge theories on compact surfaces. Each admissible conjugation-invariant L\'evy process on a compact connected Lie group determines a universality class of lattice gauge theories whose continuum limit is the associated Markovian holonomy process. Our result can be seen as a gauge-theoretic analogue of invariance principles for random walks and L\'evy processes.
This framework includes the Yang--Mills holonomy process and the standard heat-kernel (Villain), Wilson, and Manton lattice actions. The proof uses state-sum formulas of independent interest, separating action-dependent spectral coefficients from action-independent topological coefficients determined by the surface and the marked ribbon type of the loop configuration.
\end{abstract}
\tableofcontents

\section{Introduction}
The Yang--Mills holonomy process, introduced by L\'evy in \cite{Lev03,Lev10}, is a stochastic process indexed by loops based at a fixed point $o$ of a compact surface $\Sigma$, valued in a compact Lie group $G$, and satisfying the multiplicativity property expected of holonomies. It provides a rigorous description of the parallel transports of a random connection distributed according to the two-dimensional Yang--Mills measure. As a stochastic process, its parameter is not time but the loop along which the holonomy is computed; correspondingly, its Markov property is spatial and is expressed through cutting and gluing of surfaces \cite[Chapter 5]{Lev03}. 

Building on this analogy, L\'evy constructs in \cite{Lev10} a whole family of two-dimensional Markovian holonomy fields in which the Yang--Mills measure sits exactly as the Brownian motion sits in the class of Lévy processes. In L\'evy's terminology, a Markovian holonomy field is more than a single process on one surface: it is a coherent family of measures on multiplicative holonomy functions, indexed by measured marked surfaces (with possible constraints), and satisfying natural covariance, cutting--gluing, disjoint-union, and Markov-type axioms. After fixing a surface and a base point, and restricting to based loops, such a field induces a holonomy process in the sense used in the present paper. The fields constructed by L\'evy are parametrized by a class of $G$-valued L\'evy processes, called admissible.

This perspective suggests a natural question, which is the focus of the present paper. It is well known that every L\'evy process arises as the scaling limit of a suitable class of discrete random walks. We ask: \emph{what is the gauge-theoretic analogue of a random walk, and do such discrete objects converge, in an appropriate scaling limit, to the holonomy processes induced by Markovian holonomy fields?} In this work, we propose a natural definition of discrete holonomy processes (our ``random walks in gauge theory'') and study their convergence toward the corresponding continuous holonomy processes obtained from L\'evy's fields.

A defining feature of L\'evy's Markovian holonomy fields \cite{Lev03,Lev10} is their exact consistency under subdivision. The convolution-semigroup property implies that subdividing a face and integrating over the newly introduced edge variables leaves the theory unchanged. Generic lattice actions\footnote{In the case of Yang--Mills theory, this includes, for example, the Wilson and Manton actions.}, however, do not enjoy this exact consistency: their discrete holonomy laws genuinely depend on the chosen topological map. Our purpose is to show that this dependence disappears under refinement and to identify precisely which features of the plaquette action survive in the continuum limit.

We prove that, for a broad class of central plaquette weights on an arbitrary compact oriented surface and with arbitrary compact connected structure group, the continuum scaling limit depends only on the infinitesimal Fourier symbol of the plaquette action. More precisely, if the Fourier coefficients of the plaquette weights satisfy
\[
 a_t(\lambda)=\frac{1}{d_\lambda}\langle Q_t,\chi_\lambda\rangle
 =e^{-t\psi(\lambda)}+o_\lambda(t),\qquad t\downarrow0,
\]
for the symbol $\psi$ of an admissible conjugation-invariant L\'evy process, together with a uniform spectral summability condition, then the corresponding lattice holonomy processes converge to the based-loop holonomy process induced by the Markovian holonomy field associated with $\psi$. Consequently, admissible L\'evy symbols parametrize universality classes of two-dimensional lattice gauge theories.

The mechanism behind the theorem is a state-sum factorization separating the action-dependent Fourier data of the plaquette weights from topological coefficients determined by the surface and the loop configuration. Once the marked ribbon type is fixed, these topological coefficients are invariant under refinement, while all dependence on the plaquette action and on the face areas is carried by products of one-plaquette Fourier coefficients. This separation is the basic reason why the infinitesimal symbol $\psi$ alone determines the limit. We describe the factorization precisely below; let us first state our main result.

\subsection{Main result}

Let us fix a compact connected Lie group $G$, and a $\operatorname{Ad}G$-invariant inner product on its Lie algebra $\mathfrak g$. This determines a bi-invariant Riemannian metric and distance on $G$, the Laplace--Beltrami operator, the heat kernel, and the Casimir eigenvalues. Haar measure is always normalized to have total mass one. Let $\Sigma_{g,n}$ be a compact connected oriented surface of genus $g\geq0$ with $n$ boundary components $b_1,\ldots,b_n$. We assume that it is endowed with a smooth positive area density $\vol$, which can for instance be inherited from a Riemannian structure, and we denote by $|D|=\vol(D)$ the area of a domain $D\subset\Sigma_{g,n}$. For any topological map $\Gbb=(V,E,F)$ embedded in $\Sigma_{g,n}$, we consider the space of $G$-valued configurations $G^E$, identified with the space of discrete connections on the trivial $G$-bundle over $\Gbb$. Given a configuration $U=(U_e)_{e\in E}$ and a path $\gamma=e_1^{\varepsilon_1}\cdots e_m^{\varepsilon_m}$, with $\varepsilon_i\in\{\pm1\}$, its holonomy is
\[
 \Hol_\gamma(U)=U_{e_1}^{\varepsilon_1}\cdots U_{e_m}^{\varepsilon_m}.
\]
 The associated unnormalized lattice gauge measure on $\Gbb$, for a family $(Q_t)_{t>0}$ of central continuous probability densities called central plaquette weights and boundary condition $[t_i]$ around the $i$-th boundary component, is
\[
 d\mu(U)=\prod_{f\in F}Q_{|f|}(\Hol_{\partial f}(U))
 \prod_{i=1}^n\delta_{[t_i]}(\Hol_{\partial b_i}(U))\,dU,
\]
where $dU=\prod_{e\in E}dU_e$ and $\delta_{[t_i]}$ is the invariant orbital probability measure on the conjugacy class $[t_i]$ of $t_i\in G$. 

Fix a common base vertex $o$. The gauge-invariant observables are Wilson loop functionals $F(H_{\ell_1},\ldots,H_{\ell_k})$, with $\ell_i\in L_o(\Gbb)$ and $F:G^k\to\R$ invariant under the diagonal adjoint action of $G$, where $H_\ell:U\mapsto\Hol_\ell(U)$. For instance, when $G=\U(N)$, such functionals are generated by products of traces\footnote{See e.g. \cite{Lev04} for a proof.}
\[
 W_{\ell_1,\ldots,\ell_k}=\Tr(H_{\ell_1})\cdots\Tr(H_{\ell_k}).
\]
For a finite based loop family $L=(\ell_1,\ldots,\ell_k)$, we denote the corresponding unnormalized Wilson loop integral by
\[
 \mathcal E_{\Gbb}(F;L)
 :=\int_{G^E}F(\Hol_{\ell_1}(U),\ldots,\Hol_{\ell_k}(U))\,d\mu(U).
\]
When the partition function is non-zero, normalization gives a probability measure, and the full discrete holonomy process can be viewed as a random homomorphism
\[
 H\in\Hom(L_o(\Gbb),G)/G,
\]
since gauge transformations conjugate all based-loop holonomies simultaneously. Its joint holonomy laws are the finite-dimensional marginals; Wilson loop expectations are expectations of conjugation-invariant test functions against these marginals.

The continuous counterpart is the based-loop holonomy process induced by a Markovian holonomy field in the sense of L\'evy \cite{Lev03,Lev10}. Once a surface $\Sigma_{g,n}$ and a base point $o$ are fixed, such a field determines, after normalization, a gauge-invariant law on multiplicative $G$-valued functions of based loops. Its local building blocks are conjugation-invariant L\'evy processes on $G$: Brownian motion yields the ordinary two-dimensional Yang--Mills holonomy process, while more general admissible L\'evy processes give non-Brownian holonomy processes.

For completeness, let us recall the probabilistic input. A stochastic process $X=(X_t)_{t\geq0}$ on $G$, defined on a probability space $(\Omega,\mathcal F,\mathbb P)$, is called a L\'evy process if:
\begin{enumerate}
\item $X_0=1_G$ almost surely;
\item for any $0\leq t_1<t_2<\cdots<t_n$, the increments $X_{t_1}^{-1}X_{t_2},X_{t_2}^{-1}X_{t_3},\ldots,X_{t_{n-1}}^{-1}X_{t_n}$ are independent;
\item for any $0\leq s<t$, the increment $X_s^{-1}X_t$ has the same law as $X_{t-s}$;
\item for every $\varepsilon>0$ and every $t\geq0$,
\[
 \lim_{s\to t}\mathbb P\big(d(X_s,X_t)>\varepsilon\big)=0,
\]
where $d$ is any metric inducing the topology of $G$;
\item the sample paths $t\mapsto X_t$ are almost surely c\`adl\`ag.
\end{enumerate}

Following \cite{Lev10}, a L\'evy process $X$ on $G$ is called \emph{admissible} if it is started at $1_G$ and, for every $t>0$, the law of $X_t$ has a central positive density $Q_t^X$ with respect to Haar measure, such that $(t,g)\mapsto Q_t^X(g)$ is continuous and positive on $\R_+^*\times G$. Centrality implies that the Fourier transform in every irreducible representation $\lambda$ is scalar. The convolution-semigroup property and continuity at $t=0$ therefore give a unique function
\[
 \psi:\widehat G\longrightarrow\C,
 \qquad \Re\psi(\lambda)\geq0,
\]
called the \emph{L\'evy symbol}, such that
\[
 \frac{1}{d_\lambda}\langle Q_t^X,\chi_\lambda\rangle=e^{-t\psi(\lambda)},
 \qquad \lambda\in\widehat G.
\]
Because $Q_t^X$ is real-valued, $\psi(\overline\lambda)=\overline{\psi(\lambda)}$. The symbol is real and nonnegative when the semigroup is invariant under inversion; conjugation invariance alone does not force this additional symmetry. Not all L\'evy processes are admissible: compound Poisson processes, for instance, are not. As discussed in \cite{Lev10}, admissible L\'evy processes nevertheless form the natural class associated with Markovian holonomy fields.

We shall consider more general families $Q=(Q_t)_{t>0}$ that need not themselves form a convolution semigroup; we call them \emph{action weights}. Let $\psi:\widehat G\to\C$ be the L\'evy symbol of an admissible L\'evy process. We say that $Q$ is \emph{scaling-admissible for the symbol $\psi$} if:
\begin{itemize}
\item[(A1)] for every $t>0$, $Q_t$ is a central continuous probability density on $G$;
\item[(A2)] for every $\lambda\in\widehat G$,
\[
 a_t(\lambda):=\frac{1}{d_\lambda}\langle Q_t,\chi_\lambda\rangle
 =e^{-t\psi(\lambda)}+o_\lambda(t),\qquad t\downarrow0;
\]
\item[(A3)] for every $T>0$ and $M\geq0$, there exists $t_0=t_0(T,M)>0$ such that
\begin{equation}\label{eq:single-plaquette-summability}
 \sum_{\lambda\in\widehat G}
 d_\lambda^M
 \sup_{0<t\leq t_0}|a_t(\lambda)|^{T/t}
 <\infty.
\end{equation}
\end{itemize}

Absolute convergence of a single-plaquette character expansion is a separate regularity property. We say that $Q_t$ is \emph{Peter--Weyl regular} if
\begin{equation}\label{eq:PW-sum}
 \sum_{\lambda\in\widehat G}d_\lambda^2|a_t(\lambda)|<\infty.
\end{equation}
Under~\eqref{eq:PW-sum}, the Peter--Weyl series of $Q_t$ converges absolutely and uniformly.

If $X$ is an admissible L\'evy process with symbol $\psi$, then $(Q_t^X)_{t>0}$ is scaling-admissible for $\psi$. Indeed, (A1) and (A2) follow from admissibility and the convolution-semigroup property, while
\[
 \sup_{0<t\leq t_0}|a_t(\lambda)|^{T/t}=e^{-T\Re\psi(\lambda)},
\]
which is summable in  $\lambda$ with the weight $d_\lambda^M$ by Lemma~\ref{lem:levy-symbol-summability}. In addition, every $Q_t^X$ is Peter--Weyl regular: the semigroup property gives $a_t(\lambda)=a_{t/2}(\lambda)^2$, and Plancherel yields
\[
 \sum_{\lambda\in\widehat G}d_\lambda^2|a_t(\lambda)|
 =\sum_{\lambda\in\widehat G}d_\lambda^2|a_{t/2}(\lambda)|^2
 =\|Q_{t/2}^X\|_{L^2(G)}^2<\infty.
\]
The point of scaling-admissibility is that the semigroup identity itself is unnecessary: only its infinitesimal Fourier behaviour, supplemented by the uniform summability in (A3), survives in the scaling limit.

There remains a geometric issue: the discrete process on $\Gbb_n$ and the limiting process on $\Sigma$ have different index sets. We handle this through regular loop families, stable approximations, and universally stable sequences of topological maps, leading to the notion of regular convergence introduced in Definition~\ref{def:regular-process-convergence}. The geometric framework is described in detail below in the subsection on geometric input and proof strategy.

\begin{theorem}[Universality of holonomy processes]\label{thm:universality}
Let $\Sigma$ be a compact connected oriented Riemannian surface, possibly with boundary, endowed with a smooth positive area density $\vol$, and fix a base point $o\in\operatorname{int}(\Sigma)$. Let $G$ be a compact connected Lie group, let $(\Gbb_n)_{n\geq1}$ be a universally stable sequence of embedded topological maps on $\Sigma$ based at $o$, and let $(Q_t)_{t>0}$ be scaling-admissible with L\'evy symbol $\psi$. In the case with boundary, fix the same prescribed boundary conjugacy classes $\mathfrak t$ for the lattice models and for the limiting Markovian holonomy field.
\par Then the lattice partition function is strictly positive for all sufficiently large $n$. For those $n$, let $H_n$ be the holonomy process on $\Gbb_n$ associated with the plaquette weights $(Q_t)_{t>0}$ and boundary data $\mathfrak t$ when present. Then $(H_n)$ converges regularly to the holonomy process $H^\psi$ on $\Sigma$ induced by the regular Markovian holonomy field associated with the L\'evy process of symbol $\psi$ and the same boundary data.
\end{theorem}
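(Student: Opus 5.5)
The proof reduces everything to convergence of unnormalized Wilson loop integrals for a fixed loop family, via the state-sum factorization announced in the introduction. Fix a regular loop family $L=(\ell_1,\dots,\ell_k)$ in $\Sigma$ together with its stable approximations $L_n$ in $\Gbb_n$, provided by the universal stability of $(\Gbb_n)$. For $n$ large the pair $(\Gbb_n,L_n)$ has a fixed marked ribbon type, and $\Gbb_n$ refines a fixed coarse map $\Gbb_L$ adapted to $L$, whose faces $D_1,\dots,D_m$ have areas $|D_j|$. The plan is to expand each plaquette weight in characters, $Q_{|f|}=\sum_\lambda d_\lambda a_{|f|}(\lambda)\chi_\lambda$, and each boundary delta $\delta_{[t_i]}$ likewise. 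We then integrate out edge variables using the Peter--Weyl/Schur orthogonality relations, after first expanding a conjugation-invariant $F$ in invariant matrix coefficients. This should yield a formula of the form
\[
\mathcal E_{\Gbb_n}(F;L_n)=\sum_{\lambda_1,\dots,\lambda_m}\ \prod_{j=1}^m\ \prod_{f\subset D_j}a_{|f|}(\lambda_j)\ \cdot\ C_{\mathrm{top}}(F;\lambda_1,\dots,\lambda_m),
\]
where the topological coefficient $C_{\mathrm{top}}$ depends only on the surface, the boundary data and the marked ribbon type. Its refinement invariance follows because gluing two plaquettes along an edge forces equal labels (Schur orthogonality $\chi_\lambda*\chi_\mu=\delta_{\lambda\mu}\chi_\lambda/d_\lambda$). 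The same computation with the semigroup $Q^X$, where $\prod_{f\subset D_j}e^{-|f|\psi(\lambda_j)}=e^{-|D_j|\psi(\lambda_j)}$, reproduces L\'evy's formula for the Markovian holonomy field on $\Gbb_L$. So the limit object has exactly the same expansion with spectral factor $\prod_j e^{-|D_j|\psi(\lambda_j)}$.

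The second step is termwise convergence. By (A2), $\log a_t(\lambda)=-t\psi(\lambda)+o_\lambda(t)$. Universal stability should give $\mesh(\Gbb_n)\to0$, i.e.\ $\max_f|f|\to0$. Hence for each fixed $\lambda$,
\[
\prod_{f\subset D_j}a_{|f|}(\lambda)=\exp\Big(-|D_j|\psi(\lambda)+\sum_{f\subset D_j}o_\lambda(|f|)\Big)\to e^{-|D_j|\psi(\lambda)},
\]
since $\sum_f|f|=|D_j|$.

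The third step, which I expect to be the main obstacle, is dominated convergence over the labels. The topological coefficients are polynomially bounded, $|C_{\mathrm{top}}|\le C\prod_j d_{\lambda_j}^{M}$ with $M$ depending on the genus, the number of boundary components and the loop family; this comes from the standard $d_\lambda^{2-2g-\dots}$ Witten-type factors and the Clebsch--Gordan multiplicities. The difficulty is that the faces in $D_j$ have unequal areas $|f|$, while (A3) controls only $|a_t(\lambda)|^{T/t}$ uniformly in $t\le t_0$. I would handle this by writing $|a_{|f|}(\lambda)|=(|a_{|f|}(\lambda)|^{T/|f|})^{|f|/T}$. Bounding each factor by $S(\lambda):=\sup_{t\le t_0}|a_t(\lambda)|^{T/t}$, valid once $\mesh\le t_0$, gives
\[
\prod_{f\subset D_j}|a_{|f|}(\lambda)|\le S(\lambda)^{|D_j|/T}.
\]
Taking $T=\min_j|D_j|$, and using $S\le1$ (since $|a_t|\le1$ for a probability density), yields domination by $S(\lambda)$. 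By (A3) with the needed $M$, $\sum_\lambda d_\lambda^M S(\lambda)<\infty$, so the multi-sum is dominated and $\mathcal E_{\Gbb_n}(F;L_n)\to\mathcal E^\psi(F;L)$.

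Finally, positivity and normalization. Applying the previous steps with $F\equiv1$ shows that the partition functions converge to the continuum partition function $Z^\psi$. This is strictly positive because the admissible densities are positive, so $Z_n>0$ for large $n$. Ratios then converge, giving convergence of all Wilson loop expectations for continuous invariant $F$ (by density of characters/invariant coefficient functions and uniform boundedness, extending from the finite Peter--Weyl span). Since conjugation-invariant test functions determine laws on $G^k/G$, this gives convergence of finite-dimensional marginals for every regular loop family. That is exactly regular convergence in the sense of Definition~\ref{def:regular-process-convergence}.
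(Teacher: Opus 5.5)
Your proposal follows the paper's proof in outline: a state-sum factorization into products of one-plaquette coefficients times an action-independent topological coefficient, termwise convergence from (A2), and domination from (A3). The domination uses exactly the rewriting $|a_{|f|}(\lambda)|=(|a_{|f|}(\lambda)|^{T/|f|})^{|f|/T}$ together with a polynomial bound on the topological coefficient. The argument then closes with $F\equiv1$, positivity of $Z^\psi$, normalization and invariant test functions. Two steps, however, are wrong as written and have to be replaced by what the paper actually does.

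\textbf{First, the geometry of stable approximations.} A stable approximation does not make $\Gbb_n$ refine a fixed coarse map carrying $L$. The loops $L_n$ move with $n$: each finite graph carries only finitely many simple closed curves, so all but countably many members of a continuum of based simple loops with distinct images are carried by no $\Gbb_n$. Definition~\ref{def:Levy-stable-graph-approximation} only provides homeomorphisms $h_n$ preserving the marked ambient ribbon type, with $|h_n(C_j)|\to|C_j|$. The repair has four parts:
\begin{enumerate}
\item The labels must sit on the components $C_{j,n}=h_n(C_j)$ of $\Sigma\setminus\Gamma(L_n)$. These are in general not disks, so you need the partition-function formula on each component, with its genus and several boundary cycles.
\item The topological coefficient of $L_n$ must be identified with that of $L$. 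This uses its invariance under subdivision and under marked ambient ribbon type (Remark~\ref{rem:topological-coefficient-subdivision} and Lemma~\ref{lem:topological-coefficient-marked-invariance}).
\item The product limit must be taken with $\sum_{f\subset C_{j,n}}|f|=|C_{j,n}|\to|C_j|$, not with an exact area identity.
\item In the domination, $T$ must be chosen strictly below $\min_j|C_j|$, so that $|C_{j,n}|/T\ge1$ for large $n$. The paper's $\eta_j$ play this role.
\end{enumerate}
A fixed map carrying $L$ is legitimate only on the continuum side, where you identify the limit with L\'evy's field.

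\textbf{Second, term-by-term character expansion.} (A1)--(A3) do not make an individual $Q_t$ Peter--Weyl regular. Also, $\delta_{[t]}$ has no convergent character expansion. So expanding every plaquette and boundary factor and integrating term by term is not justified. The paper instead proves the state-sum formula by heat-kernel regularization $Q_{|f|}*p_\varepsilon$ and orbital coordinates for the boundary constraints. This works under the componentwise summability~\eqref{eq:component-spectral-summability}, which your (A3) bound supplies once $M\ge2$. Finally, expanding $F$ is unnecessary. $\widehat W_{F,L}$ is defined directly for continuous invariant $F$, and its polynomial bound follows from $|\chi_\lambda|\le d_\lambda$ alone.
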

\begin{remark}
The qualification ``regularly'' is necessary because $H_n$ and $H^\psi$ have different index sets: $H_n$ is indexed by $L_o(\mathbb G_n)$, whereas $H^\psi$ is indexed by rectifiable loops in $\Sigma$. Definition~\ref{def:regular-process-convergence} simultaneously controls the approximation of loops and the convergence of their holonomies.
\end{remark}
The theorem identifies an entire universality class with each admissible L\'evy symbol. The most important example is the Brownian class,
\[
 \psi_{\mathrm{YM}}(\lambda)=\frac12c_2(\lambda),
\]
where $c_2(\lambda)$ is the Casimir eigenvalue of the irreducible representation $\lambda$. The limiting process is then the ordinary two-dimensional Yang--Mills holonomy process. This class contains the heat-kernel action, but also the Wilson and Manton actions. Thus these microscopically different lattice theories have the same holonomy-level continuum limit.

The statement is not restricted to diffusions: if $f$ is a Bernstein function such that the subordinated heat kernels on $G$ are admissible, then Brownian motion subordinated by the subordinator with Laplace exponent $f$ has symbol
\[
 \psi_f(\lambda)=f\!\left(\frac12c_2(\lambda)\right).
\]
For example, $f(u)=u^\alpha$, $0<\alpha<1$, gives a stable-type non-Brownian Markovian holonomy field. The theorem identifies the scaling limit of any scaling-admissible plaquette action with infinitesimal symbol $\psi_f$ with the holonomy process induced by this non-Brownian field. In particular, the holonomy formulation treats Brownian and genuinely non-Brownian universality classes within the same framework.

\subsection{State-sum factorization and the mechanism of universality}

The analytic content of the scaling assumption becomes effective because unnormalized Wilson loop integrals admit state-sum expansions in which all dependence on the action and on the face areas is separated from the topology of the loop configuration. These are the first structural results of the paper. For $t>0$ and $\lambda\in\widehat G$, write as above $a_t(\lambda)=\frac{1}{d_\lambda}\langle Q_t,\chi_\lambda\rangle$.

\begin{theorem}[Partition function]\label{thm:Fourier_PF}
Let $\Sigma_{g,n}$ be a compact connected oriented surface and let $\Gbb=(V,E,F)$ be a topological map embedded in $\Sigma_{g,n}$, with every boundary component carried by $\Gbb$. Let $(Q_t)$ be a family of central continuous probability densities and assume the graph-level spectral summability condition
\begin{equation}\label{eq:PF-spectral-summability}
 \sum_{\lambda\in\widehat G}
 d_\lambda^{2-2g}
 \prod_{f\in F}|a_{|f|}(\lambda)|<\infty.
\end{equation}
Then the partition function has the absolutely convergent expansion
\begin{equation}\label{eq:pf-state-sum-intro}
 Z_{\Gbb,G,Q,\vol}(t_1,\ldots,t_n)
 =\sum_{\lambda\in\widehat G}
 \left(\prod_{f\in F}a_{|f|}(\lambda)\right)
 d_\lambda^{2-2g}\prod_{j=1}^n\frac{\chi_\lambda(t_j)}{d_\lambda}.
\end{equation}
\end{theorem}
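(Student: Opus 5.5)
We expand each plaquette weight in characters, integrate edge by edge with the Schur orthogonality relations, and recognize the resulting sum as the standard Migdal–Witten formula. The only delicate point is that we do not assume Peter--Weyl regularity of the individual $Q_{|f|}$, only the summability condition \eqref{eq:PF-spectral-summability}.

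\textbf{Algebraic computation with smooth weights.} Assume first that every $Q_{|f|}$ is a finite linear combination of characters, so that
\[
Q_{|f|}=\sum_\lambda d_\lambda a_{|f|}(\lambda)\chi_\lambda .
\]
Expanding the product over faces gives a sum over colourings $\lambda:F\to\widehat G$.

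Each edge $e$ not on the boundary borders two faces (possibly the same face twice) with opposite orientations. Integrating over $U_e$ with the relations
\[
\int\chi_\lambda(xU)\chi_\mu(U^{-1}y)\,dU=\delta_{\lambda\mu}\,\chi_\lambda(xy)/d_\lambda,
\qquad
\int\chi_\lambda(xUyU^{-1})\,dU=\chi_\lambda(x)\chi_\lambda(y)/d_\lambda,
\]
forces adjacent faces to carry the same colour. Since the surface is connected, only constant colourings $\lambda$ survive.

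For a fixed $\lambda$, we then proceed by induction on the map. We contract a spanning tree, for which edge integration is trivial by gauge invariance, and merge faces along interior edges. This reduces to a one-face map whose boundary word is
\[
\prod_{i=1}^g[a_i,b_i]\;\prod_{j=1}^n c_j\, x_j\, c_j^{-1},
\]
where $x_j$ is the boundary holonomy constrained by $\delta_{[t_j]}$. Each merge along an edge contributes a factor $d_\lambda^{-1}$ while removing an edge and a face. Each commutator integration contributes $d_\lambda^{-1}$ times $\chi_\lambda$ of the remaining word. Each boundary conjugation contributes $\chi_\lambda(t_j)/d_\lambda$. Collecting the powers of $d_\lambda$ through the Euler characteristic $V-E+F=2-2g-n$, with the boundary vertices and edges counted appropriately, yields exactly
\[
d_\lambda^{2-2g}\prod_j\frac{\chi_\lambda(t_j)}{d_\lambda}\prod_f a_{|f|}(\lambda).
\]
To make this bookkeeping robust, I would verify it by a direct count: contracting the tree reduces to a single vertex without changing the integral, and the one-vertex case is the classical computation.

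\textbf{Removing the smoothness assumption.} The main obstacle is justifying the interchange of sum and integral when the single-plaquette series need not converge absolutely. I would regularize by convolution. Let $(\varphi_\epsilon)$ be the heat kernel at time $\epsilon$, which is central and smooth, and set $Q^\epsilon_f=Q_{|f|}*\varphi_\epsilon$. Then $Q^\epsilon_f\to Q_{|f|}$ uniformly, since each $Q_{|f|}$ is continuous on a compact group, and its coefficients are $a_{|f|}(\lambda)e^{-\epsilon c_2(\lambda)/2}$. These decay rapidly, so the character expansions converge absolutely and uniformly, and the algebraic computation above applies to $Q^\epsilon$.

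On the integral side, the unnormalized measure with weights $Q^\epsilon$ converges to the one with weights $Q$. The integrand converges uniformly, the $\delta_{[t_i]}$ factors are probability measures, and the domain is compact, so $Z^\epsilon\to Z$.

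On the series side, the summand for $Q^\epsilon$ is dominated in absolute value by
\[
d_\lambda^{2-2g}\prod_f|a_{|f|}(\lambda)|\,\prod_j\frac{|\chi_\lambda(t_j)|}{d_\lambda}\le d_\lambda^{2-2g}\prod_f|a_{|f|}(\lambda)|,
\]
using $|\chi_\lambda|\le d_\lambda$. This bound is summable by \eqref{eq:PF-spectral-summability}, so dominated convergence as $\epsilon\to0$ gives \eqref{eq:pf-state-sum-intro}, together with its absolute convergence.

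One point to check is the case where the boundary holonomy is pinned by $\delta_{[t_i]}$ rather than by a face weight. Every boundary component is carried by $\Gbb$, so the constraint is a measure on a loop of edges. Handling it via its gauge-invariant orbital average, with the Schur identity $\int\chi_\lambda(x\,g t g^{-1})\,dg=\chi_\lambda(x)\chi_\lambda(t)/d_\lambda$, fits into the same induction.
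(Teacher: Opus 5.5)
Your proposal follows essentially the same route as the paper's proof. The shared steps are: heat-kernel regularization $Q_{|f|}*p_\varepsilon$; Schur gluing, which forces a single label across the connected dual graph; tree gauge fixing and reduction to a one-face normal form; and dominated convergence as $\varepsilon\downarrow0$ using \eqref{eq:PF-spectral-summability}.

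There is one slip in your bookkeeping. A handle contributes
\[
\int_{G^2}\chi_\lambda([A,B]x)\,dA\,dB=\frac{\chi_\lambda(x)}{d_\lambda^{2}},
\]
that is, $d_\lambda^{-2}$ rather than the $d_\lambda^{-1}$ you state. Taken literally, your count would give $d_\lambda^{2-g}$. With the correct factor, the tally
\[
d_\lambda^{q}\cdot d_\lambda^{-(q-1)}\cdot d_\lambda^{-2g}\,\chi_\lambda(1)\cdot\prod_j\frac{\chi_\lambda(t_j)}{d_\lambda},\qquad q=\#F,
\]
yields exactly $d_\lambda^{2-2g}\prod_j\chi_\lambda(t_j)/d_\lambda$.

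Two minor remarks. After contracting a spanning tree, the face word is really $\prod_i[a_i,b_i]\prod_j c_j$ on $2g+n$ loop edges, with $c_j$ the boundary loop itself, as in the paper. Your lasso form $\prod_j c_jx_jc_j^{-1}$ gives the same integral, with the $c_j$-integrations playing the role of the orbital coordinates. Also, the step where you call the one-vertex case "the classical computation" needs the fact that the normal-form change of variables preserves Haar measure; the paper obtains this from Nielsen moves.
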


Condition~\eqref{eq:PF-spectral-summability} concerns the complete product of face coefficients rather than the character expansion of any individual plaquette. It is automatically satisfied if one face weight is Peter--Weyl regular, since $|a_t(\lambda)|\leq1$. More importantly for scaling limits, assumption (A3) implies it when the faces are sufficiently small while their total area stays bounded below.

Formula~\eqref{eq:pf-state-sum-intro} is classical for the heat-kernel action \cite{Rus90,Wit91,BlauThom92}, where subdivision invariance allows one to reduce directly to a one-face graph. It is stated without proof at the same level of generality in \cite{DangNohra26}, and it may also be compared with the spin-foam formulas obtained in the physics literature \cite{OeckPfei01}. We give a complete proof for general central actions. The important feature for the present paper is that the factors depending on the plaquette weights and on the areas are cleanly separated from the topological factor $d_\lambda^{2-2g}\prod_j\chi_\lambda(t_j)/d_\lambda$.

The same principle extends to arbitrary gauge-invariant functions of finitely many holonomies. Fix a vertex $o$ and let $L=(\ell_1,\ldots,\ell_k)$ be a regular\footnote{See Definition~\ref{def:regular-loop-family}.} finite family of graph loops in $L_o(\Gbb)$, and let $\Gamma(L)$ be the finite ribbon subgraph formed by the edges visited by the loops. Cutting $\Sigma_{g,n}$ along $\Gamma(L)$ gives compact surfaces with boundary $C_1,\ldots,C_r$; let $F_i$ be the set of faces contained in $C_i$. Choose one orientation of each cut edge and denote the corresponding set by $E_L$. If $U=(U_e)_{e\in E_L}\in G^{E_L}$, write $h_{\ell_i}(U)$ for the word corresponding to $\ell_i$. For each component $C_\alpha$, denote its boundary holonomies by
\[
 x_{\alpha,1}(U),\ldots,x_{\alpha,m_\alpha}(U).
\]
Each of them is either one of the prescribed boundary holonomies $t_j$, up to conjugacy, or a word in the variables $U_e^{\pm1}$. For any configuration $\Lambda:\{C_1,\ldots,C_r\}\to\widehat G$, define the \emph{spectral coefficient}
\begin{equation}\label{eq:spec-coef}
 \kappa_{\Gbb,Q,\vol}(\Lambda)
 =\prod_{i=1}^r\left(\prod_{f\in F_i}a_{|f|}\big(\Lambda(C_i)\big)\right)
\end{equation}
and the \emph{topological coefficient}
\begin{equation}\label{eq:top-coef}
 \widehat W_{F,L}(\Lambda)
 =\int_{G^{E_L}}F\big(h_{\ell_1}(U),\ldots,h_{\ell_k}(U)\big)
 \prod_{i=1}^r\left[d_{\Lambda(C_i)}^{\chi(C_i)}
 \prod_{j=1}^{m_i}\chi_{\Lambda(C_i)}\big(x_{i,j}(U)\big)\right]dU.
\end{equation}

\begin{theorem}[Wilson loop integrals]\label{thm:Fourier_Wilsonloop}
Let $\Sigma_{g,n}$ be a compact surface of genus $g$ with $n$ boundary components, with prescribed boundary conjugacy classes $\mathfrak t=([t_1],\ldots,[t_n])$ when $n>0$. Let $\Gbb=(V,E,F)$ be a topological map embedded in $\Sigma_{g,n}$, fix a vertex $o\in V$, and let $L=(\ell_1,\ldots,\ell_k)$ be a finite regular family of loops in $L_o(\Gbb)$. Denote by $C_1,\ldots,C_r$ the surfaces with boundary obtained by cutting $\Sigma_{g,n}$ along $\ell_1,\ldots,\ell_k$, and let $F_\alpha$ be the set of faces contained in $C_\alpha$. Let $(Q_t)$ be a family of central continuous probability densities and assume that, for every $1\leq\alpha\leq r$,
\begin{equation}\label{eq:component-spectral-summability}
 \sum_{\lambda\in\widehat G}
 d_\lambda^{2-2g_\alpha}
 \prod_{f\in F_\alpha}|a_{|f|}(\lambda)|<\infty,
\end{equation}
where $g_\alpha$ is the genus of $C_\alpha$. For any $F\in C(G^k)^{\operatorname{Ad}G}$, set
\[
 \mathcal E_{\Gbb}(F;L)
 :=\int_{\Omega^1(\Gbb,G)}
 F\bigl(\Hol_{\ell_1}(U),\ldots,\Hol_{\ell_k}(U)\bigr)\,
 d\mu_{\Gbb,G,Q,\vol,\mathfrak t}(U).
\]
Then we have the absolutely convergent decomposition
\begin{equation}\label{eq:decomp_spectrale_WL}
 \mathcal E_{\Gbb}(F;L)
 =\sum_{\Lambda}\kappa_{\Gbb,Q,\vol}(\Lambda)\widehat W_{F,L}(\Lambda).
\end{equation}
\end{theorem}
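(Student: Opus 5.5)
The idea is to integrate out, component by component, every edge variable not visited by the loops. This reduces each component $C_\alpha$ to a partition-function-type integral with free boundary holonomies, to which Theorem~\ref{thm:Fourier_PF} applies; the remaining integrals over the edges of $\Gamma(L)$ then give the topological coefficients.

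First, split $E=E_L\sqcup\bigsqcup_\alpha E_\alpha$, where $E_\alpha$ is the set of edges lying in the interior of $C_\alpha$, that is, not on $\Gamma(L)$ and not on $\partial\Sigma_{g,n}$. Boundary edges of $\Sigma_{g,n}$ belong to the $C_\alpha$ containing them; their holonomy is pinned by the delta measures $\delta_{[t_j]}$, which are handled as in Theorem~\ref{thm:Fourier_PF}. The integrand $F(\Hol_{\ell_1},\dots,\Hol_{\ell_k})$ depends only on $U|_{E_L}$, and each plaquette factor $Q_{|f|}(\Hol_{\partial f})$ with $f\in F_\alpha$ depends only on $U|_{E_\alpha}$ and on the edges of $\partial C_\alpha$, which lie in $E_L$ or on $\partial\Sigma$. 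By Fubini (everything is continuous on a compact group), we obtain
\[
\mathcal E_\Gbb(F;L)=\int_{G^{E_L}}F(h_{\ell_1}(U),\dots,h_{\ell_k}(U))\prod_{\alpha=1}^r Z_\alpha\big(x_{\alpha,1}(U),\dots,x_{\alpha,m_\alpha}(U)\big)\,dU ,
\]
where $Z_\alpha(x_1,\dots,x_{m_\alpha})$ is the integral over $G^{E_\alpha}$ of $\prod_{f\in F_\alpha}Q_{|f|}$, with boundary edges frozen. This is where regularity of $L$ (Definition~\ref{def:regular-loop-family}) enters. We need that cutting along $\Gamma(L)$ produces genuine surfaces $C_\alpha$ whose boundary cycles are words $x_{\alpha,j}(U)$ in the cut-edge variables, and that the restriction of $\Gbb$ to each $C_\alpha$ is a topological map carrying all boundary components.

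Second, identify $Z_\alpha$. Using gauge invariance at the boundary vertices of $C_\alpha$, $Z_\alpha$ is a central function of each boundary holonomy separately. I would then show that it equals the partition function of Theorem~\ref{thm:Fourier_PF} for the surface $C_\alpha$ of genus $g_\alpha$, evaluated at boundary data $x_{\alpha,j}$. Concretely, I would rerun that proof with the boundary deltas replaced by fixed group elements, using $\int_G\chi_\lambda(g x g^{-1}y)\,dg=\chi_\lambda(x)\chi_\lambda(y)/d_\lambda$ to see that fixing an element and averaging over its conjugacy class agree. Under \eqref{eq:component-spectral-summability} this gives the absolutely and uniformly convergent expansion
\[
Z_\alpha(x)=\sum_{\lambda}\Big(\prod_{f\in F_\alpha}a_{|f|}(\lambda)\Big)d_\lambda^{2-2g_\alpha}\prod_{j}\frac{\chi_\lambda(x_j)}{d_\lambda}.
\]
Since $\chi(C_\alpha)=2-2g_\alpha-m_\alpha$, this can be rewritten as $d_\lambda^{\chi(C_\alpha)}\prod_j\chi_\lambda(x_j)$, which matches \eqref{eq:top-coef}. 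Uniformity follows from $|\chi_\lambda|\le d_\lambda$: the $\lambda$-th term is bounded by $d_\lambda^{2-2g_\alpha}\prod_{f}|a_{|f|}(\lambda)|$, which is summable.

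Third, substitute these expansions into the product over $\alpha$ and expand the product of the $r$ series into a sum over $\Lambda:\{C_\alpha\}\to\widehat G$. This multiple series converges absolutely and uniformly in $U$, being dominated by the product of the $r$ convergent majorant series. Since $F$ is bounded, we may then interchange the sum with $\int_{G^{E_L}}$. The factors $\prod_{f\in F_\alpha}a_{|f|}(\Lambda(C_\alpha))$ come out as $\kappa_{\Gbb,Q,\vol}(\Lambda)$, and what remains is exactly $\widehat W_{F,L}(\Lambda)$. This proves \eqref{eq:decomp_spectrale_WL} together with its absolute convergence.

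The main obstacle is the second step. Theorem~\ref{thm:Fourier_PF} is stated for a closed surface with conjugacy-class boundary data, whereas here each $C_\alpha$ comes from cutting along a possibly self-touching ribbon graph: a single edge of $\Gamma(L)$ can bound the same component on both sides, and boundary words may share letters. I would first handle this by working on the abstract cut surface, where each side of a cut edge carries its own copy of the variable, and apply the partition-function formula there. Only afterwards would I re-identify the copies in $E_L$. This is legitimate because the re-identification happens inside the outer integral over $G^{E_L}$, and that is precisely why the topological coefficient is defined as an integral of words in the variables $U_e^{\pm1}$.
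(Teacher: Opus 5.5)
Your proposal is correct and follows the paper's proof. Both condition on the cut-edge variables in $E_L$, factor the remaining integral into component partition functions with boundary holonomies $x_{\alpha,q}(U)$, expand each one by Theorem~\ref{thm:Fourier_PF}, and interchange the label sums with the $E_L$-integral using the uniform bound $d_\lambda^{2-2g_\alpha}\prod_{f\in F_\alpha}|a_{|f|}(\lambda)|$. Your extra steps fill in points the paper treats as immediate: checking via gauge invariance that frozen boundary elements give the same component partition function as conjugacy-class averaging, and passing through the cut-open surface when a component meets itself along $\Gamma(L)$.
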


The terminology is deliberate. The coefficient $\kappa_{\Gbb,Q,\vol}(\Lambda)$ is a product of Fourier--Plancherel coefficients of the plaquette weights and contains all dependence on the action, the areas, and the refinement. By contrast, $\widehat W_{F,L}(\Lambda)$ depends only on the topology of the surface and of the loop configuration and not on the volume measure, the plaquette action, or the ambient graph once the marked ribbon type is fixed.

This makes the universality mechanism transparent. For a fixed component $C$ and a fixed representation $\lambda$, a stable refinement gives, schematically,
\[
 \prod_{f\subset C}a_{|f|}(\lambda)
 =\prod_{f\subset C}\big(e^{-|f|\psi(\lambda)}+o_\lambda(|f|)\big)
 \longrightarrow e^{-|C|\psi(\lambda)}.
\]
The topology-dependent coefficient does not change along the stable approximation. Assumption (A3) supplies the uniform spectral domination needed to pass this convergence through the representation sums. Thus the exact state-sum formula turns the scaling problem into a product limit for one-plaquette Fourier coefficients, which is the representation-theoretic core of Theorem~\ref{thm:universality}.

These state-sum formulas are much easier for the heat-kernel action because the discrete two-dimensional Yang--Mills measure is exactly invariant under subdivision. For a general action there is no such reduction, and part of the work is to develop a systematic graph-independent integration procedure. Theorem~\ref{thm:Fourier_Wilsonloop} is reminiscent of conformal bootstrap decompositions in CFT or of IRF models. For $G=\U(N)$, L\'evy recently obtained a purely combinatorial explicit formula \cite{Lev26} when $F$ is a product of traces; his results are stated for the heat-kernel action, although many of the arguments apply more generally.

For the scaling-limit theorem we do \emph{not} need an explicit expression for every $\widehat W_{F,L}(\Lambda)$: its stability and action-independence are enough. Explicit control becomes important in other asymptotic regimes. For example, Dahlqvist \cite{Dah26} computed large-$N$ limits of Wilson loops for the heat-kernel action using a refined extension of Koike--Schur--Weyl duality. Related representation-theoretic state-sum ideas extend to higher-dimensional lattice gauge theory and are developed for $\U(N)$ with general actions by the first author in \cite{Lem26b}, with an application to the heat-kernel action at strong coupling in \cite{Lem26c}.
\par We also mention that it would be of interest to see if such state-sum formulas can be applied to the study of the zero--area limit of the Markovian holonomy fields, beyond the Yang--Mills case (see for example \cite{Sen97limit,Liu96,DangNohra26b}).

\subsection{Relation with previous results}

The continuum limit of two-dimensional lattice gauge theories has been approached from several complementary directions. The exact character formulas of Rusakov, Witten, and Blau--Thompson \cite{Rus90,Wit91,BlauThom92} exhibit the special solvability of the two-dimensional theory. Driver \cite{Dri89} proved convergence of lattice Yang--Mills theories at the level of expectations of gauge-invariant observables associated with finitely many curves, thereby identifying the continuum theory through its finite-dimensional holonomy distributions. Gross--King--Sengupta \cite{GKS89} constructed the continuous Yang--Mills measure directly through stochastic parallel transport, and Sengupta extended this construction to compact surfaces \cite{Sen97}.

A structural construction at the level of holonomies was subsequently developed by L\'evy \cite{Lev03,Lev10}. He first constructed the Yang--Mills holonomy process and then general Markovian holonomy fields from consistent families of finite-dimensional distributions satisfying a Markov property. This is the continuum framework in which our limiting objects live. The present result runs in the opposite direction: starting from microscopic lattice models that are generally not projectively consistent under subdivision, it identifies the based-loop holonomy processes induced by L\'evy's fields as their universal scaling limits.

A different line of work realizes the Yang--Mills measure as a random distributional connection. This involves a highly technical study of this measure in Banach spaces of distributional connections, with regularity analysis using Sobolev or H\"older--Besov scales. On the torus, Chevyrev \cite{Chev19} constructed the Yang--Mills measure as a random distribution with good regularity properties, enabling the application of SPDE techniques and regularity structures to its study. Indeed, Chandra--Chevyrev--Hairer--Shen \cite{CCHS22} constructed the associated Langevin dynamics, and subsequent work proved universality of the Yang--Mills measure for broad classes of lattice approximations; see Chevyrev--Shen \cite{ChevShen26}. Recently, constructions and universality results on general surfaces were obtained using the Morse gauge by Dang and the second author \cite{DangNohra26}, while Chevyrev--Klose--Mohammed \cite{CKM} developed a PDE approach on the square, and Bonthonneau--Chhaibi--Dang--Rivi\`ere--T\^o \cite{BCDRT26} a dynamical-systems approach directly in the continuum to the Yang--Mills measure on compact surfaces.

These results and the present theorem address complementary levels of description. The distributional-connection framework provides a strong functional-analytic treatment of the Brownian class, establishing the scaling limit in suitable Banach spaces of distributional connections that capture the analytic regularity of the Yang--Mills random connection. Our aim, by contrast, is to obtain a universality statement directly for holonomy processes and to generalize it to the full admissible Lévy–Markovian class, including genuinely non-Brownian symbols\footnote{For instance, subordinated or jump-type Markovian holonomy fields, for which the Brownian Yang--Mills measure need not be the natural limiting object.}. It is closer in spirit to  Chevyrev--Garban \cite{ChevGar25}.

Nevertheless, it would be of interest to investigate whether techniques from \cite{Chev19,DangNohra26,CKM,BCDRT26} can be applied to Markovian holonomy fields beyond the Yang--Mills case, in order to realize them as random distributional connections and to analyze their regularity and scaling limits in Banach spaces of distributions, depending on the Lévy symbol.

\subsection{Geometric input and proof strategy}

The remaining difficulty is geometric rather than spectral. A discrete holonomy process on $\Gbb_n$ is naturally indexed by the group $L_o(\Gbb_n)$ of reduced based loops, while the holonomy process induced on $\Sigma$ by a continuum Markovian holonomy field is indexed by rectifiable based loops in $\Sigma$. Since the index sets vary with $n$, ordinary finite-dimensional convergence is not by itself a complete formulation of the scaling limit.

We resolve this by introducing \emph{regular loop families}, Definition~\ref{def:regular-loop-family}, together with stable graph approximations preserving their marked ambient ribbon type and the areas of the complementary components. Convergence along all such approximations leads to the notion of \emph{regular convergence}, Definition~\ref{def:regular-process-convergence}; the limiting process then extends from regular loops to all rectifiable loops by stochastic continuity. To make this simultaneous for every finite regular family, we introduce \emph{universally stable} graph sequences, Definition~\ref{def:universally-stable-graph-sequence}, and prove in Proposition~\ref{prop:existence-universally-stable-sequence} that such sequences exist on every compact smooth surface.

With these notions in place, the proof of Theorem~\ref{thm:universality} separates into two independent ingredients. First, Theorem~\ref{thm:Fourier_Wilsonloop}, together with scaling-admissibility, gives convergence for every fixed regular loop family along every stable graph approximation. Second, the deterministic universal-stability construction provides a single vanishing-mesh graph sequence that contains stable approximations of all finite regular loop families. This separation mirrors the state-sum formula itself: the first step is spectral and probabilistic, while the second is geometric and deterministic.

The paper is organized accordingly. Section~2 reviews the discrete gauge-theoretic and representation-theoretic preliminaries. Section~3 proves the partition-function and Wilson-loop state-sum formulas. Section~4 introduces regular loop families, stable approximations, universally stable graph sequences, and regular convergence, and proves the universality theorem from the state-sum expansion. Section~5 proves the existence of universally stable graph sequences.
\section*{Acknowledgments}
\par E.N. is deeply grateful to his PhD advisor, Thierry Lévy, and to his collaborator, Nguyen Viet Dang, for their constant support, sharp insights, and kindness. He also thanks Nguyen Viet Dang and Nicolas Fournier for valuable discussions on Lévy processes, and Thomas Duquesne for his wonderful course on Markov processes at Sorbonne Université. 
Finally, he gratefully acknowledges the financial support of the ``Fondation CFM pour la Recherche'' and thanks the foundation for providing excellent working conditions during his PhD. 
\par T.L. thanks Nguyen Viet Dang for stimulating discussions about state-sum formalism and approximation theory.

\section{Preliminaries on discrete gauge theory}

In this section we review the basic definitions of lattice gauge theory, and we introduce a few useful results.

\subsection{Graphs, paths and loops}

In this paper, we will repeatedly go back and forth between combinatorial properties of graphs and topological properties of surfaces, because lattice gauge theory lives on the former and the holonomy processes induced by Markovian holonomy fields live on the latter. For this reason, we recall a few definitions. The first one is that of a topological map.

\begin{definition}\label{def:topological_map}
Let $\Sigma_{g,n}$ be a compact surface of genus $g$ with $n$ connected boundary components. A \emph{topological map} embedded in $\Sigma_{g,n}$ is a cellular embedding $\Gbb=(V,E,F)$ of a graph $(V,E)$. 
\end{definition}

Concretely, it means the following: start with a combinatorial graph $\tilde{G}=(\tilde{V},\tilde{E})$, together with embeddings $\theta_V:\tilde{V}\to\Sigma_{g,n}$ and $\theta_E:\tilde{E}\to\Sigma$ such that the images of vertices are distinct points of the surface, and the images of edges are continuous curves on the surface, that only meet at their endpoints, which are images of vertices. We denote by $V=\theta_V(\tilde{V})$ and $E=\theta_E(\tilde{E})$ the images of vertices and edges respectively (note that $(V,E)$ is also a combinatorial graph if we forget the topological structure). Finally, $F$ is the set of faces, which are the connected components of $\Sigma_{g,n}\setminus(V\cup E)$. Vertices (resp. edges, faces) are 0-cells (resp. 1-cells, 2-cells) of the topological map. An important assumption is that all faces are simply connected, \emph{i.e.}, homeomorphic to a disk. A related notion is that of a ribbon graph.

\begin{definition}A ribbon graph is a finite graph $\Gamma=(V,E)$, possibly with loops and multiple edges, together with, for every vertex $v\in V$, a cyclic ordering of the half-edges incident to $v$. An isomorphism of ribbon graphs is a graph isomorphism preserving these cyclic orders.
\end{definition}

If $\Sigma_{g,n}$ is an oriented surface of genus $g$ with $n$ connected boundary components, every finite graph embedded in $\Sigma_{g,n}$ with edges meeting only at their endpoints carries a canonical ribbon structure: the cyclic order at each vertex is the cyclic order induced by the orientation of $\Sigma$. We call such an object an embedded ribbon graph. An embedded ribbon graph is not required to be cellular. However, if the complementary components are disks, then the embedded ribbon graph determines a topological map. Figures~\ref{fig:ribbon} and~\ref{fig:ribbon-map} illustrate the difference between a ribbon graph and a topological map: in Figure~\ref{fig:ribbon}, we have a graph $\Gbb$ embedded in $\Sigma_{3,1}$. The connected components of $\Sigma_{3,1}\setminus\Gbb$ are not homeomorphic to disks, therefore the graph embedding is not cellular, and the graph is only a ribbon graph. In order to obtain a topological map, additional edges must be given, which results for instance in the new graph in Figure~\ref{fig:ribbon-map}.

\begin{figure}[h!]
 \centering
 \includegraphics[width=0.7\linewidth]{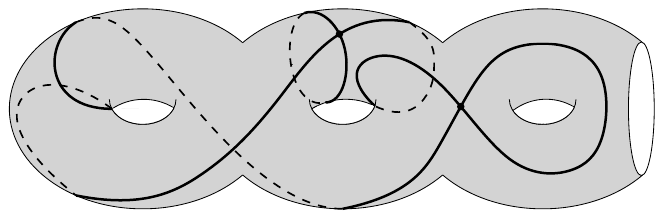}
 \caption{A ribbon graph embedded in $\Sigma_{3,1}$ with two vertices and four edges.}
 \label{fig:ribbon}
\end{figure}

As $\Sigma_{g,n}$ is oriented, the faces have a natural orientation coming from their embedding. Each edge $e\in E$ can be endowed with an orientation so that it has a starting point $\underline{e}$ and an endpoint $\overline{e}$. We assume that an initial orientation of all edges is fixed, and we allow the following action on the graph: for any $e\in E$, we denote by $e^{-1}$ the curve obtained by reversing the parametrization of $e$. It is formally the same edge, but its starting point and endpoint are exchanged. It yields an action of $\{\pm1\}\cong\Z_2$ on $E$ given by $1\cdot e=e^1=e$ and $(-1)\cdot e=e^{-1}$. In the following, when $e_1,\ldots,e_k\in E$ will be fixed, we will allow implicitly to consider them with their initial orientation $e_i$ or with their reverse orientation $e_i^{-1}$.

\begin{figure}[h!]
 \centering
 \includegraphics[width=0.7\linewidth]{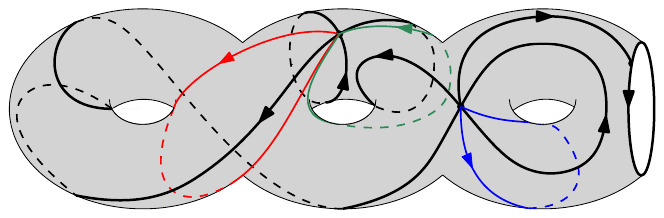}
 \caption{An oriented topological map that contains the ribbon graph from Figure~\ref{fig:ribbon}.}
 \label{fig:ribbon-map}
\end{figure}

\begin{definition}
A \emph{curve} in a topological map $\Gbb=(V,E,F)$ is either a single vertex, or a concatenation $e_1\ldots e_k$ of edges $e_1,\ldots,e_k\in E$, where $\overline{e_i}=\underline{e_{i+1}}$ for all $i$. Given a curve $\gamma$, we define its starting point $\underline{\gamma}$ and endpoint $\overline{\gamma}$ as follows:
\begin{enumerate}
\item If $\gamma=v$ is a vertex, then $\underline{\gamma}=\overline{\gamma}=v$.
\item If $\gamma=e_1\ldots e_k$, then $\underline{\gamma}=\underline{e_1}$ and $\overline{\gamma}=\overline{e_k}$.
\end{enumerate}
\end{definition}

In the definition above, $e_1,\ldots,e_n$ are not necessarily distinct, and they can be endowed with any orientation. A convenient way to encode a non-constant curve $\gamma$ is as an element of the free group $\mathbb{F}_E$ generated by the set of edges. The curve is thus written as a word $w$ in all edges and their inverse.

A curve $\gamma$ such that $\underline{\gamma}=\overline{\gamma}=v$ is called a \emph{loop} with base $v$. Another important assumption in the definition of the map $\Gbb$ is that all boundary components of $\Sigma_{g,n}$ are loops in $\Gbb$. We denote by $L(\Gbb)$ the set of loops in $\Gbb$. For a fixed vertex $v$, we identify based edge loops that differ by insertion or deletion of an immediate backtracking $ee^{-1}$; concatenation followed by reduction makes the resulting set $L_v(\Gbb)$ into a group, the \emph{reduced based edge-path group}. Its identity is the constant loop at $v$, and inversion is given by reversal of the edge word.

\subsection{Configuration space of lattice gauge theory}

For any compact surface $\Sigma$ and any topological map $\Gbb=(V,E,F)$ embedded in $\Sigma$, we build a configuration space of discrete gauge theory as follows:
\begin{enumerate}
\item A discrete connection is given by a function $U:E\to G$ such that $U(e^{-1})=U(e)^{-1}$, or equivalently, a configuration $U=(U_e)_{e\in E}\in G^E$ that is compatible with the change of orientation.
\item The holonomy of a discrete connection $U$ along a path $\gamma=e_1\ldots e_n$ in $\Gbb$ is defined by
\[
\Hol_\gamma(U)=U_{e_1}\ldots U_{e_n}.
\]
The curvature of a discrete connection $U$ is the function $\Omega^U:F\to G$ defined (up to conjugation) by
\[
\Omega^U(f)=\Hol_{\partial f}(U),
\]
for any face $f\in F$.
\end{enumerate}

\begin{remark}[Holonomies and words]\label{rmk:holo}
An important caveat is the following. Holonomy is often defined covariantly by
\[
 \Hol_\gamma(U)=U_{e_n}\cdots U_{e_1},
\]
so that for two compatible curves $\gamma_1,\gamma_2$ one has
$\Hol_{\gamma_1\gamma_2}(U)=\Hol_{\gamma_2}(U)\Hol_{\gamma_1}(U)$: functions are composed from right to left, whereas paths are concatenated from left to right. With our convention, fixing a discrete connection $U$ defines instead a word-evaluation homomorphism
\[
 \operatorname{ev}_U:\mathbb F_E\longrightarrow G,
 \qquad
 w\longmapsto w(U_e:e\in E),
\]
and, whenever the word $w$ represents a path $\gamma$, one has
\begin{equation}\label{eq:hol_words}
 \Hol_\gamma(U)=\operatorname{ev}_U(w)=w(U_e:e\in E).
\end{equation}
Under the usual covariant convention, the corresponding word-evaluation rule is an antihomomorphism. Note that not every element of the free group $\mathbb F_E$ is represented by a path, because successive edge letters must be composable. Our convention is therefore only a convenient way of evaluating the words that do represent paths.
\end{remark}

The set $\Omega^1(\Gbb,G)\simeq G^{\vert E\vert}$ of discrete connections is a compact Lie group, hence it possesses a Haar probability measure $dU$. The gauge group $G^V$ acts on $\Omega^1(\Gbb,G)$ by
\[
(j\cdot U)_e = j(\underline{e})^{-1}U_e j(\overline{e}),\qquad j:V\to G.
\]
With our convention for holonomy, this gives
\[
\Hol_\gamma(j\cdot U)=j(\underline{\gamma})^{-1}\Hol_\gamma(U)j(\overline{\gamma}).
\]
In particular, based loop holonomies are conjugated by the gauge variable at the base point. For this reason, gauge invariance of discrete connections translates to a conjugation invariance of holonomies.

\subsection{Discrete Yang--Mills measure}

\begin{definition}\label{def:discrete-YM-measure}
Let $\Sigma$ be a compact surface of genus $g$ with boundary components
$b_1,\ldots,b_n$, endowed with an area measure $\vol$. Let
$\Gbb=(V,E,F)$ be a topological map embedded in $\Sigma$, let $G$ be a
compact group, and let $(Q_t)_{t>0}$ be a family of central continuous
probability densities on $G$. Fix conjugacy classes
$[t_1],\ldots,[t_n]$ and write
$\mathfrak t=([t_1],\ldots,[t_n])$.

For every $i$, choose an oriented edge $e_i$ contained in the $i$-th
boundary component; these edges are pairwise distinct. Once all variables
except $U_{e_i}$ are fixed, the boundary holonomy has the form
\[
 \Hol_{\partial b_i}(U)=a_i(U)U_{e_i}^{\varepsilon_i}b_i(U),
 \qquad \varepsilon_i\in\{\pm1\},
\]
so, for every $y_i\in G$, there is a unique value of $U_{e_i}$ for which
\[
 \Hol_{\partial b_i}(U)=y_it_iy_i^{-1}.
\]
Denote by $U^{\mathbf y}$ the resulting connection, where
$\mathbf y=(y_1,\ldots,y_n)$. The \emph{discrete Yang--Mills measure}
with weight $Q$ and boundary condition $\mathfrak t$ is the finite measure
characterized by
\begin{equation}\label{eq:DS-rigorous}
\begin{aligned}
 \int_{\Omega^1(\Gbb,G)}\! \Phi(U)\,
 d\mu_{\Gbb,G,Q,\vol,\mathfrak t}(U)
 :=
 \int_{G^{E\setminus\{e_1,\ldots,e_n\}}}\int_{G^n}
 &\Phi(U^{\mathbf y})
 \prod_{f\in F}Q_{|f|}\bigl(\Hol_{\partial f}(U^{\mathbf y})\bigr)
 \,d\mathbf y\,dU
\end{aligned}
\end{equation}
for every bounded measurable $\Phi$. This definition is independent of the
chosen boundary edges, by invariance of Haar measure under left and right
translations and inversion.
\end{definition}

We use the customary shorthand
\begin{equation}\label{eq:DS}
 d\mu_{\Gbb,G,Q,\vol,\mathfrak t}(U)
 =
 \prod_{f\in F}Q_{|f|}(\Hol_{\partial f}(U))
 \prod_{i=1}^n\delta_{[t_i]}(\Hol_{\partial b_i}(U))\,dU,
\end{equation}
where $\delta_{[t]}$ denotes invariant orbital probability measure:
\begin{equation}\label{eq:Dirac}
 \int_G F(x)\,\delta_{[t]}(dx)
 =\int_G F(yty^{-1})\,dy.
\end{equation}
Formula~\eqref{eq:DS} is always interpreted through
\eqref{eq:DS-rigorous}; it is not a product of distributions on
$\Omega^1(\Gbb,G)$.

In the case of the heat kernel action, Equation~\eqref{eq:DS} is known as the Driver--Sengupta equation, named after Driver \cite{Dri89} and Sengupta \cite{Sen97} who used it to construct the continuous two-dimensional Yang--Mills measure. Note that the term inside the action is exactly the curvature $\Omega^U(f)$ for each face. By Remark~\ref{rmk:holo}, we can replace the holonomies along all boundaries (of faces and of the surface) by words in the variables $U_e,e\in E$. The measure $\mu_{\Gbb,G,Q,\vol,\mathfrak t}$ is finite because every
$Q_{|f|}$ is continuous on the compact group $G$. Its mass is the
\emph{partition function}
\[
 Z_{\Gbb,G,Q,\vol}(t_1,\ldots,t_n)
 =\int_{\Omega^1(\Gbb,G)}1\,
 d\mu_{\Gbb,G,Q,\vol,\mathfrak t}.
\]
If $E=\{e_1,\ldots,e_k\}$ and $F=\{f_1,\ldots,f_p\}$, we will use the
formal but convenient notation
\begin{equation}\label{eq:YM_distrib}
\begin{aligned}
 \int \Phi(U)\,d\mu(U)
 =\int_{G^k}
 &\Phi(U_{e_1},\ldots,U_{e_k})
 \prod_{i=1}^p Q_{|f_i|}\bigl(w_i(U)\bigr)
 \prod_{j=1}^n\delta_{[t_j]}\bigl(w'_j(U)\bigr)\,dU,
\end{aligned}
\end{equation}
where $w_i$ and $w'_j$ are the face and boundary words. As above,
\eqref{eq:YM_distrib} is shorthand for the rigorous iterated integral
\eqref{eq:DS-rigorous}.
\par In two-dimensional lattice Yang--Mills theory, the heat-kernel (or Villain) action is sometimes replaced by either the Wilson or the Manton action. It is classical\footnote{See \cite{ChevGar25,ChevShen26,DangNohra26} for similar statements; our claim follows from similar arguments.} that both actions satisfy assumptions (A1)--(A3) with the same symbol as the heat kernel, and we therefore omit the details. Consequently, our results imply that, at the level of holonomy processes, the Yang--Mills measure arises as the scaling limit of lattice gauge theories defined by the Wilson and Manton actions.
\par In the case of an action induced by another semigroup, Equation~\eqref{eq:DS} gives a finite-dimensional distribution of the holonomy process induced by the corresponding Markovian holonomy field constructed by L\'evy in~\cite{Lev10}.

\subsection{Edge integration}

\begin{definition}
Let $\Gbb=(V,E,F)$ and $\Gbb'=(V',E',F')$ be two topological maps embedded in the same surface. $\Gbb$ is said to be \emph{finer} than $\Gbb'$ if all edges of $\Gbb'$ can be written as paths in $\Gbb$. In this case we write $\Gbb'\preceq\Gbb$.
\end{definition}

If $\Gbb'\preceq\Gbb$, there is a restriction map $\mathcal R:\Omega^1(\Gbb,G)\to\Omega^1(\Gbb',G)$. For each oriented coarse edge $e\in E'$, choose its expression $e=e_1\cdots e_k$ as a path of oriented edges of $\Gbb$ and set $\mathcal R(U)_e=U_{e_1}\cdots U_{e_k}$. This is independent of the chosen initial orientations, since reversing an edge replaces its variable by its inverse.

Let us state a fact that is simple yet important for the sequel. Recall that the degree of a vertex $v\in V$ of a graph $(V,E)$ is the number of edges that are adjacent to $v$.

\begin{proposition}[Edge merging]\label{prop:edge_fusion}
Let $\Gbb=(V,E,F)$ be a topological map embedded in a surface $\Sigma_{g,n}$ that contains a subdivision vertex $v\in V$: the two half-edges incident to $v$ belong to two distinct non-loop edge segments $e_1$ and $e_2$, and no other half-edge is incident to $v$. Let $\Gbb'=(V',E',F')$ be a new topological map obtained by removing $v$ and replacing $\{e_1,e_2\}$ by a single edge $e_{12}$ whose embedding is given by the same curve as $e_1e_2$. Then
\begin{equation}
\mathcal{R}_*\mu_{\Gbb,G,Q,\vol,\mathfrak{t}}=\mu_{\Gbb',G,Q,\vol,\mathfrak{t}}.
\end{equation}
\end{proposition}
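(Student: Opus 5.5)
The plan is a change of variables in the defining integral~\eqref{eq:DS-rigorous}. We show that for every bounded measurable $\Phi'$ on $\Omega^1(\Gbb',G)$,
\[
 \int \Phi'(\mathcal R(U))\,d\mu_{\Gbb,G,Q,\vol,\mathfrak t}(U)=\int \Phi'(U')\,d\mu_{\Gbb',G,Q,\vol,\mathfrak t}(U').
\]
First we compare the combinatorial data. Since $v$ has degree two and lies only on $e_1$ and $e_2$, the faces of $\Gbb$ and $\Gbb'$ coincide as subsets of $\Sigma$, so $F=F'$ and the areas $|f|$ are unchanged. Every face boundary word in $\Gbb$ that passes through $v$ contains $e_1e_2$ or its inverse $e_2^{-1}e_1^{-1}$ as consecutive letters, because $v$ is a subdivision vertex. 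Replacing these by $e_{12}^{\pm1}$ gives the boundary word in $\Gbb'$. The same holds for the boundary loops $\partial b_i$. Consequently
\[
 \Hol_{\partial f}(U)=\Hol_{\partial f}(\mathcal R(U)),
\]
where on the right $\mathcal R(U)_{e_{12}}=U_{e_1}U_{e_2}$ and $\mathcal R(U)_e=U_e$ for all other edges. A cyclic rotation of a face word only conjugates the holonomy, which does not matter since $Q_t$ is central.

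Next we compute the pushforward of Haar measure. Fix all variables other than $U_{e_1},U_{e_2}$ and substitute $W=U_{e_1}U_{e_2}$. For fixed $U_{e_1}$, the map $U_{e_2}\mapsto U_{e_1}U_{e_2}$ preserves Haar measure on $G$. Hence the image of $dU_{e_1}\,dU_{e_2}$ under $(U_{e_1},U_{e_2})\mapsto U_{e_1}U_{e_2}$ is Haar measure $dW$. Since every integrand factor depends on $(U_{e_1},U_{e_2})$ only through $W$, Fubini gives
\[
 \int\Phi'(\mathcal R U)\prod_f Q_{|f|}(\cdots)\,dU_{e_1}dU_{e_2}=\int \Phi'(U')\prod_f Q_{|f|}(\cdots)\,dW,
\]
with $U'_{e_{12}}=W$. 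Integrating over the remaining variables yields the desired identity in the case without boundary.

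The main obstacle is the boundary constraint. The rigorous definition~\eqref{eq:DS-rigorous} solves for a distinguished boundary edge $e_i$ in each boundary component, and $e_1$ or $e_2$ might lie on a boundary component. We handle this with the independence of the choice of boundary edges stated after Definition~\ref{def:discrete-YM-measure}. If $v$ lies on a boundary component, then both $e_1$ and $e_2$ lie on it, since $v$ has degree two. In that case choose $e_1$ as the distinguished edge for $\Gbb$ and $e_{12}$ for $\Gbb'$. Then $U^{\mathbf y}_{e_1}$ is determined by $U_{e_2}$, $y_i$ and the other variables, and $U^{\mathbf y}_{e_1}U_{e_2}$ equals exactly the solved value $U'^{\mathbf y}_{e_{12}}$ in $\Gbb'$. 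Integrating out the free variable $U_{e_2}$ then contributes mass one, because Haar measure is normalized. If $v$ is not on the boundary, choose distinguished boundary edges other than $e_1,e_2$, and the Haar substitution above applies unchanged. In either case we must also check that the remaining chosen boundary edges stay pairwise distinct in $\Gbb'$, which holds by construction.
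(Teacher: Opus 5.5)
Your proof is correct and follows the paper's argument. Both use that $e_1,e_2$ enter every face and boundary word only through $e_1e_2$ or $e_2^{-1}e_1^{-1}$, change variables to $U_{e_{12}}=U_{e_1}U_{e_2}$, and integrate out the redundant variable using Haar invariance. Your explicit treatment of the boundary constraint through the rigorous definition \eqref{eq:DS-rigorous}, taking $e_1$ as the distinguished boundary edge when $v$ lies on $\partial\Sigma$, is a useful refinement, since the paper works only with the formal shorthand involving $\delta_{[t_i]}$.
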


\begin{proof}
We need to prove that for any continuous test function $F:\Omega^1(\Gbb',G)\to\C$,
\[
\int_{\Omega^1(\Gbb,G)}F(\mathcal{R}(U))d\mu_{\Gbb,G,Q,\vol,\mathfrak{t}}(U)=\int_{\Omega^1(\Gbb',G)}F(U)d\mu_{\Gbb',G,Q,\vol,\mathfrak{t}}(U).
\]
Let $E=\{e_1,e_2,\ldots,e_k\}$, with the convention that $e_1$ and $e_2$ are adjacent to $v$ as in the assumption. Let us also assume that they are oriented so that $v=\overline{e_1}=\underline{e_2}$. We have $\mathcal{R}(U)_{e_{12}}=U_{e_1}U_{e_2}$, and $\mathcal{R}(U)_{e_i}=U_{e_i}$ for all $i\geq 3$. For all $f\in F$, let $w_f$ be the word associated to $\partial f$, and for all $1\leq i\leq n$, let $w'_i$ be the word associated to $\partial b_i$. The important fact is that every occurrence of these edge germs in a face or boundary word is through $e_1e_2$ or through its inverse $e_2^{-1}e_1^{-1}$. They correspond therefore to words $\tilde{w}_f$ and $\tilde{w}'_i$ in $e_{12},e_3,\ldots,e_k$ which are the edges of $\Gbb'$. By~\eqref{eq:YM_distrib}, we have on the one hand
\begin{align*}
\int_{\Omega^1(\Gbb,G)}F(\mathcal{R}(U))d\mu_{\Gbb,G,Q,\vol,\mathfrak{t}}(U)= & \int_{G^k}F(U_{e_1}U_{e_2},U_{e_3},\ldots,U_{e_k})\prod_{f\in F}Q_{\vert f\vert}(w_f(U_{e_1},\ldots,U_{e_k}))\\
&\times\prod_{i=1}^n\delta_{[t_i]}(w'_i(U_{e_1},\ldots,U_{e_k}))dU_{e_1}\ldots dU_{e_k}.
\end{align*}
We perform a change of variables $U_{e_{12}}=U_{e_1}U_{e_2}$, which replaces all words $w_f$ and $w'_i$ by $\tilde{w}_f$ and $\tilde{w}'_i$, then integrate over $U_{e_2}$, yielding $\int_{\Omega^1(\Gbb',G)}F(U)d\mu_{\Gbb',G,Q,\vol,\mathfrak{t}}(U)$.
\end{proof}

We shall use the following convention for convolution. If $P,Q\in L^1(G)$, then
\[
 (P*Q)(g)=\int_G P(x)Q(x^{-1}g)\,dx .
\]

\begin{proposition}[Edge removal]\label{prop:edge_removal}
Let $\mathbb G=(V,E,F)$ be a topological map embedded in $\Sigma_{g,n}$.
Let $e$ be an internal edge incident to two distinct faces $f_1,f_2$, and
let $\mathbb G'$ be obtained by deleting $e$ and merging these faces into
$f_{12}$. Let $\mathcal R$ forget the variable attached to $e$. Then, for
every bounded measurable test function $\Phi$ on
$\Omega^1(\mathbb G',G)$,
\begin{align*}
 &\int \Phi(\mathcal R(U))\,
 d\mu_{\mathbb G,G,Q,\vol,\mathfrak t}(U)\\
 &\quad=
 \int \Phi(U')
 (Q_{|f_1|}*Q_{|f_2|})
 \bigl(\Omega^{U'}(f_{12})\bigr)
 \prod_{f\in F'\setminus\{f_{12}\}}
 Q_{|f|}\bigl(\Omega^{U'}(f)\bigr)
 \prod_i\delta_{[t_i]}(\Hol_{\partial b_i}(U'))\,dU'.
\end{align*}
In particular, if $(Q_t)_{t>0}$ is a convolution semigroup, then
\[
 \mathcal R_*\mu_{\mathbb G,G,Q,\vol,\mathfrak t}
 =\mu_{\mathbb G',G,Q,\vol,\mathfrak t}.
\]
If, in addition, $Q_{|f_1|+|f_2|}>0$ almost everywhere, the first identity
may equivalently be written as a Radon--Nikodym formula with quotient
$(Q_{|f_1|}*Q_{|f_2|})/Q_{|f_1|+|f_2|}$.
\end{proposition}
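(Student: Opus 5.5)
We work directly from the shorthand \eqref{eq:YM_distrib}, interpreted rigorously via \eqref{eq:DS-rigorous}, and integrate out the single variable $U_e$. Fix orientations so that $e$ is traversed positively in $\partial f_1$ and negatively in $\partial f_2$; this is possible because the surface is oriented and $f_1\neq f_2$. The edge $e$ is internal, so it lies on no boundary component of $\Sigma_{g,n}$. We may therefore choose the boundary edges $e_i$ of Definition~\ref{def:discrete-YM-measure} distinct from $e$, and neither the boundary words nor the constraint variables $\mathbf y$ involve $U_e$. Moreover, $e$ appears in exactly two face words: once in $\partial f_1$ and once, inverted, in $\partial f_2$. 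An edge bordering two distinct faces occurs exactly once in each boundary word, and no other face contains it.

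Since $Q_{|f_i|}$ is central, we may cyclically rotate each face word and choose the starting points conveniently. Writing $x=U_e$, we get
\[
 \Hol_{\partial f_1}(U)\sim x\,A,\qquad \Hol_{\partial f_2}(U)\sim x^{-1}B,
\]
where $A,B$ are words in the remaining variables, and $\sim$ denotes conjugation. We orient things so that $A$ and $B$ are the two arcs of $\partial f_{12}$, and so that the concatenation $AB$, read appropriately, is the boundary word of the merged face $f_{12}$ in $\mathbb G'$. By Fubini and Haar invariance, which holds for any fixed values of the other variables including the $\mathbf y$-dependent ones, the $x$-integral becomes
\[
 \int_G Q_{|f_1|}(xA)\,Q_{|f_2|}(x^{-1}B)\,dx .
\]
Substitute $z=xA$, so that $x^{-1}=Az^{-1}$. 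Using centrality again, $Q_{|f_2|}(Az^{-1}B)=Q_{|f_2|}(z^{-1}BA)$, and the integral equals
\[
 \int_G Q_{|f_1|}(z)\,Q_{|f_2|}(z^{-1}BA)\,dz=(Q_{|f_1|}*Q_{|f_2|})(BA).
\]
Here $BA$ is conjugate to $\Omega^{U'}(f_{12})$, and $Q_{|f_1|}*Q_{|f_2|}$ is central as a convolution of central functions. All other factors, namely the other face weights, the boundary constraints and $\Phi(\mathcal R(U))=\Phi(U')$, do not depend on $x$. Integrating them against the remaining Haar variables therefore yields the claimed formula.

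The only genuinely delicate point is bookkeeping: checking that the cyclic word of $\partial f_{12}$ is indeed (a rotation of) $BA$ with the correct orientation relative to the orientation of $f_{12}$ inherited from $\Sigma$. The cases where $A$ or $B$ shares edges with each other, or where $\partial f_1$ itself visits a vertex several times, require care but no new idea. Centrality of $Q$ absorbs all the cyclic rotations.

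For the corollaries: if $(Q_t)$ is a convolution semigroup, then $Q_{|f_1|}*Q_{|f_2|}=Q_{|f_1|+|f_2|}=Q_{|f_{12}|}$ by additivity of $\vol$, which gives $\mathcal R_*\mu_{\mathbb G}=\mu_{\mathbb G'}$. If $Q_{|f_1|+|f_2|}>0$ almost everywhere, then dividing and multiplying by $Q_{|f_{12}|}$ rewrites the identity as a density of $\mathcal R_*\mu_{\mathbb G}$ with respect to $\mu_{\mathbb G'}$, equal to the stated quotient evaluated at $\Omega^{U'}(f_{12})$.
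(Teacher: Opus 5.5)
Your proposal is correct and follows essentially the paper's argument: isolate the only two factors involving $x=U_e$, use Haar invariance to turn the $x$-integral into the convolution $Q_{|f_1|}*Q_{|f_2|}$ evaluated at the merged face holonomy, and note that the boundary constraints and all other factors depend only on $U'$. The only cosmetic difference is that the paper writes the factors as $Q_{|f_1|}(ax)Q_{|f_2|}(x^{-1}b)$, so the substitution $y=ax$ lands directly on $ab=\Omega^{U'}(f_{12})$, whereas you rotate to $xA$ and use centrality once more to reach $BA$; your explicit choice of boundary edges $e_i\neq e$ in \eqref{eq:DS-rigorous} is a welcome bit of extra care that the paper leaves implicit.
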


\begin{proof}
Orient $e$ so that
\[
 \partial f_1=\alpha e,
 \qquad
 \partial f_2=e^{-1}\beta,
\]
up to cyclic permutation, where $\alpha,\beta$ do not involve $e$. If
$x=U_e$, $a=\Hol_\alpha(U)$ and $b=\Hol_\beta(U)$, the only factors that
depend on $x$ are
\[
 Q_{|f_1|}(ax)Q_{|f_2|}(x^{-1}b).
\]
All boundary-holonomy constraints and all other face factors depend only on
$U'=\mathcal R(U)$. Haar invariance and the change of variables $y=ax$
give
\[
 \int_GQ_{|f_1|}(ax)Q_{|f_2|}(x^{-1}b)\,dx
 =
 (Q_{|f_1|}*Q_{|f_2|})(ab),
\]
and $ab=\Omega^{U'}(f_{12})$. This proves the displayed push-forward
identity. The semigroup case follows from
$Q_{|f_1|}*Q_{|f_2|}=Q_{|f_1|+|f_2|}$.
\end{proof}

\section{Proof of state-sum formulas}

In this section, we use the following consequence of the Peter--Weyl
theorem. The irreducible characters $(\chi_\lambda)_{\lambda\in\widehat G}$
form an orthonormal Hilbert basis of
$L^2(G)^{\operatorname{Ad}G}$. Whenever a central function $Q$ is
Peter--Weyl regular, namely
\[
\sum_{\lambda\in\widehat G} d_\lambda^2 |a_Q(\lambda)|<\infty,
\qquad
a_Q(\lambda)=\frac1{d_\lambda}\langle Q,\chi_\lambda\rangle,
\]
its character expansion
\[
Q(x)=\sum_{\lambda\in\widehat G}d_\lambda a_Q(\lambda)\chi_\lambda(x)
\]
converges absolutely and uniformly. This is only a convenient
one-plaquette sufficient condition. The state-sum formulas below require
instead the weaker graph-level product summability conditions
\eqref{eq:PF-spectral-summability} and
\eqref{eq:component-spectral-summability}.

\subsection{The partition function}

We use three elementary reductions. The first is Schur orthogonality in
character form:
\begin{equation}\label{eq:character-face-gluing}
 \int_G\chi_\lambda(ax)\chi_\mu(x^{-1}b)\,dx
 =
 \delta_{\lambda\mu}\frac{\chi_\lambda(ab)}{d_\lambda}.
\end{equation}

\begin{lemma}[Tree gauge fixing]\label{lem:tree-gauge-fixing}
Let $\Gamma=(V,E)$ be a finite connected graph, let $T\subset E$ be a
spanning tree, and fix a root $o\in V$. If $\Phi:G^E\to\mathbb C$ is
integrable and invariant under the gauge action of $G^V$, then
\[
 \int_{G^E}\Phi(U)\,dU
 =
 \int_{G^{E\setminus T}}\Phi(U^T)\,dU,
\]
where $U^T_e=1_G$ for $e\in T$ and $U^T_e=U_e$ otherwise.
The same statement holds for the boundary-conditioned integrals of
Definition~\ref{def:discrete-YM-measure}.
\end{lemma}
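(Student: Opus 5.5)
The plan is to construct, from the spanning tree, a gauge transformation that sends every tree edge to $1_G$, and then use gauge invariance of $\Phi$ together with translation invariance of Haar measure. For each vertex $v$, let $\gamma_v$ be the unique reduced path in $T$ from $o$ to $v$. Given $U\in G^E$, set
\[
 j_U(v)=\Hol_{\gamma_v}(U),
\]
so that $j_U(o)=1_G$. This value depends only on the tree variables $(U_e)_{e\in T}$.

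With the convention $(j\cdot U)_e=j(\underline e)^{-1}U_ej(\overline e)$ we need the opposite normalization, and we take $j=j_U^{-1}$ pointwise in the appropriate sense. Concretely, we use the transformation
\[
 (\tilde U)_e=j_U(\underline e)\,U_e\,j_U(\overline e)^{-1}.
\]
This is the gauge action by $v\mapsto j_U(v)^{-1}$. For a tree edge $e$ oriented away from $o$, we have $j_U(\overline e)=j_U(\underline e)U_e$, hence $\tilde U_e=1_G$. Gauge invariance then gives $\Phi(U)=\Phi(\tilde U)$.

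For a non-tree edge, $\tilde U_e=j_U(\underline e)U_ej_U(\overline e)^{-1}$. For fixed tree variables, this is a left–right translation of $U_e$ by elements independent of $U_e$. We therefore apply Fubini, integrating first over $(U_e)_{e\notin T}$ with the tree variables fixed. By bi-invariance of Haar measure, the change of variables $U_e\mapsto\tilde U_e$ on $G^{E\setminus T}$ preserves $dU$, and this yields
\[
 \int_{G^{E\setminus T}}\Phi(U^T)\,dU .
\]
This expression is independent of the tree variables. Integrating over $G^T$, which has total mass one, gives the claim. Integrability is preserved throughout because all maps involved are measure-preserving.

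For the boundary-conditioned version we use Definition~\ref{def:discrete-YM-measure}. Here the difficulty is that the solved boundary edges $e_i$ might lie in $T$. The simplest route is to rewrite the boundary-conditioned integral as an unconditioned one. Set
\[
 \Psi(U)=\Phi(U)\prod_i\Delta_i(\Hol_{\partial b_i}(U)),
\]
where each $\Delta_i$ is an approximate identity on the class $[t_i]$, meaning a central continuous density converging weakly to $\delta_{[t_i]}$. The function $\Psi$ is gauge invariant, since the boundary holonomies are only conjugated and $\Delta_i$ is central, so the first part applies to $\Psi$. We then pass to the limit on both sides when $\Phi$ is continuous, and extend to bounded measurable $\Phi$ by a monotone class argument.

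Alternatively, one can choose the boundary edges $e_i$ outside $T$. This is always possible when each boundary cycle contains a non-tree edge, which holds because a cycle cannot lie in a tree. Since Definition~\ref{def:discrete-YM-measure} is independent of the chosen edges, the $\mathbf y$-parametrization then commutes with the tree gauge fixing. I would present this second route, because it avoids limits.

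The main obstacle is the measurability and Fubini bookkeeping in this boundary case. Specifically, one must check that the solved variables $U_{e_i}$ transform covariantly under the tree gauge. Under the gauge, $\Hol_{\partial b_i}$ is conjugated by $j_U$ at its base point, so the parameter $y_i$ is replaced by $j\,y_i$. Haar invariance in $y_i$ absorbs this substitution.
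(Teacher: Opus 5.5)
Your proof is correct and follows essentially the same route as the paper. The paper also gauge-fixes with the unique transformation, normalized at $o$ and built along $T$, that trivializes the tree edges, and uses a triangular Haar-preserving change of variables; this is exactly your Fubini argument over the non-tree edges with the tree variables frozen and then integrated out with mass one. For the boundary-conditioned case the paper only says that the same argument applies, and your device of choosing the solved edges $e_i\notin T$ (possible since boundary cycles cannot lie in a tree), with $y_i$ left-translated by the gauge value at the base point of $\partial b_i$ and this absorbed by Haar invariance, is a correct way of making that remark precise.
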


\begin{proof}
Orient every edge of $T$ away from $o$. For a connection $U$, there is a
unique gauge transformation $j_U$ with $j_U(o)=1_G$ such that
$(j_U\cdot U)_e=1_G$ for all $e\in T$; its values are obtained recursively
along the tree. The change of variables
\[
 U\longmapsto\bigl((j_U\cdot U)|_{E\setminus T},
 (j_U(v))_{v\ne o}\bigr)
\]
is triangular and preserves product Haar measure. Gauge invariance removes
the second set of variables, whose Haar volume is one. The boundary
constraints are invariant under gauge transformations, so the same argument
applies to the iterated integral~\eqref{eq:DS-rigorous}.
\end{proof}

\begin{lemma}[One-face normal form]\label{lem:one-face-normal-form}
Let $\mathbb G_0$ be a connected one-face topological map on the compact
oriented surface $\Sigma_{g,n}$ and suppose that every boundary component is
carried by $\mathbb G_0$. After contracting a spanning tree, the resulting
one-vertex ribbon graph has $2g+n$ oriented loop edges. There is a free basis
\[
 a_1,b_1,\ldots,a_g,b_g,c_1,\ldots,c_n
\]
of its edge-path group such that, with the face and boundary orientations induced by the orientation of the surface, the unique face word is, up to cyclic
permutation,
\[
 \prod_{i=1}^g[a_i,b_i]\prod_{j=1}^n c_j,
\]
and $c_j$ represents the $j$-th oriented boundary component. The
corresponding change of edge variables preserves product Haar measure.
\end{lemma}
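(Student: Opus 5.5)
Contracting a spanning tree $T$ of the one-face map $\mathbb G_0$ yields a one-vertex ribbon graph whose edges are the edges of $E\setminus T$, now loops at the root. The one-face property is preserved by the contraction: the complement of a contractible tree in a disk-neighbourhood picture gives again a single face, with boundary word obtained by deleting the letters of $T$ from $\partial f$. Euler's formula for the cell decomposition of $\Sigma_{g,n}$ (boundary carried by the graph) gives $V-E+1=\chi(\Sigma_{g,n})=2-2g-n$. Hence $|E\setminus T|=E-V+1=2g+n$. This settles the edge count.

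For the normal form, I would use the classification of surfaces in its combinatorial, polygon-word version. The single face is a $2(2g+n)$-gon whose sides are identified in pairs for interior edges, while boundary edges appear only once. Here I mean the sides after contraction, and a boundary component carried by $\mathbb G_0$ becomes a chain of edges appearing once each, corresponding to a single unpaired side after the first merging step below. The standard cut-and-paste moves of the classification theorem bring the word, by successive substitutions, to $\prod_i[a_i,b_i]\prod_j c_j$. These moves are: merging consecutive boundary letters of one component, which is a substitution $c=e_1\cdots e_k$ followed by forgetting $e_2,\dots,e_k$; conjugating boundary letters into position; and grouping linked pairs into commutators. Orientability ensures that no pair appears as $xx$, so no cross-caps occur.

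Each move is a Nielsen transformation of the free group on the edge letters. That is, it replaces one generator $x$ by $uxv$ or $x^{-1}$, where $u,v$ are words in the other generators. The composite is therefore a free-basis change $\phi$ of the edge-path group $\pi_1$ of the one-vertex graph, which is free of rank $2g+n$. The $c_j$ end up as conjugates of the oriented boundary words. By the freedom in choosing the base point of each boundary cycle, I take them to represent the boundary components themselves, conjugation being irrelevant in $\delta_{[t_j]}$.

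For the measure statement, the map $G^{2g+n}\to G^{2g+n}$ induced by an elementary Nielsen move $x\mapsto uxv$ (others fixed) is, for fixed values of the other variables, a left–right translation in the $x$ coordinate. It therefore preserves Haar measure by bi-invariance and Fubini; inversion preserves it by unimodularity. Compositions preserve product Haar measure. The main obstacle is bookkeeping rather than a conceptual step. One must check that the classification algorithm applies directly to the word read off the contracted ribbon graph, with orientations induced by $\Sigma$. One must also check that the boundary letters can be kept as genuine boundary cycles throughout, rather than only up to an unrelated automorphism. I would handle this by first merging each boundary chain into one letter $c_j$. I would then run the usual normalization on the remaining paired letters, treating each $c_j$ as a fixed block that is only conjugated when moved.
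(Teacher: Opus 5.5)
Your overall route (Euler count, polygonal classification, factoring the change of basis into elementary Nielsen moves, Haar invariance of each move) is the paper's route, and your edge count and measure-preservation argument are fine. The gap is in the step you single out for the boundary: ``merging each boundary chain into one letter $c_j$'' via $c=e_1\cdots e_k$ and then ``forgetting $e_2,\dots,e_k$''. After contracting an arbitrary spanning tree, a boundary circle contributes $k_j\ge 2$ loop edges as soon as the tree joins two of its vertices through the interior.

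These letters never form a contiguous block of the face word. Suppose $e$ and $e'$ are consecutive along the boundary and adjacent in the inner face word, and let $h$ be the end of $e$ and $h'$ the start of $e'$. The inner face then turns from $h$ to $h'$, and the boundary face, traversing $e'^{-1}e^{-1}$, turns from $h'$ to $h$. So the rotation at the unique vertex contains the $2$-cycle $(h\,h')$, forcing degree $2$, whereas $k_j\ge2$ forces degree $2(2g+n)\ge4$.

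Nor can letters be ``forgotten'': Nielsen moves preserve the rank $2g+n$. The face word has $4g+2n-k_b$ letters, where $k_b=\sum_jk_j$, so it is not a $2(2g+n)$-gon; even when $k_b=n$ it has $4g+n$ sides. The surplus boundary letters must therefore become handle generators. For instance, take $\Sigma_{1,1}$ with boundary vertices $u_1,u_2$, boundary edges $\beta_1,\beta_2$, and interior edges $\gamma,\delta$ joining $u_1,u_2$ through the handle. With a suitable ribbon structure, contracting $T=\{\gamma\}$ gives face word $\beta_1\delta\beta_2\delta^{-1}$ and boundary cycle $\beta_1\beta_2$. The normal form $[\delta^{-1},\beta_1]\,c$ is reached with $c=\beta_1\beta_2$, $a=\delta^{-1}$, $b=\beta_1$, so the boundary letter $\beta_1$ survives as a genus generator rather than disappearing.

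There are two easy repairs. First, you may choose the tree to contain all but one edge of every boundary circle: these arcs form a forest, which extends to a spanning tree, and Lemma~\ref{lem:tree-gauge-fixing} allows any tree. Then $k_j=1$, each $c_j$ is a single unpaired letter, the face word has length $4g+n$, and your normalization with the $c_j$ only conjugated goes through. This proves the lemma only for such trees, which suffices for the application. Second, you can argue as the paper does, with no explicit algorithm. The thickening of the contracted ribbon graph is a genus-$g$ surface with $n+1$ boundary circles, and the classification supplies a free basis of its free fundamental group in which the face word and the boundary classes take the standard form. Every automorphism of a free group is a product of elementary Nielsen transformations, each of which preserves product Haar measure, so this covers every spanning tree.

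Finally, ``contracting'' a tree that meets $\partial\Sigma$ at several points pinches the boundary and is not a quotient that stays a surface. The contraction and the one-face claim should therefore be understood for abstract ribbon graphs.
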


\begin{proof}
The thickening of the one-vertex ribbon graph is $\Sigma_{g,n}$ with the
interior of the unique face removed. The standard polygonal classification
of compact oriented surfaces supplies the displayed system of oriented
generators and boundary word. Any two free bases of the one-vertex graph are
related by elementary Nielsen transformations. On $G^{2g+n}$ these are
generated by permutations of variables, inversion of a variable, and
replacements $x_i\mapsto x_ix_j$ with $i\ne j$; each preserves product Haar
measure by translation and inversion invariance. Hence the normal-form
change of variables is measure preserving.
\end{proof}

\begin{proof}[Proof of Theorem~\ref{thm:Fourier_PF}]
Write $F=\{f_1,\ldots,f_q\}$. For $\varepsilon>0$, regularize each
plaquette density by the heat kernel:
\[
 Q_{|f|}^{(\varepsilon)}
 :=Q_{|f|}*p_\varepsilon.
\]
These are central smooth probability densities, hence Peter--Weyl
regular, and their normalized Fourier coefficients are
\[
 a_{|f|}^{(\varepsilon)}(\lambda)
 =e^{-\frac{\varepsilon}{2}c_2(\lambda)}
 a_{|f|}(\lambda).
\]
Let $Z_\varepsilon$ denote the partition function obtained by replacing
every face weight by $Q_{|f|}^{(\varepsilon)}$. We may multiply the
absolutely and uniformly convergent character expansions and integrate
term by term.

Consider the dual adjacency graph whose vertices are the faces of
$\mathbb G$ and whose edges correspond to internal primal edges separating
two distinct faces. This dual graph is connected: a
path in the interior of the connected surface joining points in two
faces, chosen transverse to the graph and avoiding its vertices, gives a
chain of adjacent faces. Choose a spanning tree $T^*$ of this dual graph.
Integrate successively the $q-1$ primal edge variables dual to $T^*$. An example is displayed in Figure~\ref{fig:tree-integration}. At
each step, Schur orthogonality in the form
\eqref{eq:character-face-gluing} forces the two incident face labels to
agree, merges the character words, and contributes one factor
$d_\lambda^{-1}$. Since $T^*$ connects all faces, all labels are equal to
a single $\lambda$. The $q$ factors $d_\lambda$ from the character
expansions and the $q-1$ gluing factors leave one factor $d_\lambda$. We
obtain
\begin{equation}\label{eq:PF-one-face-intermediate}
 Z_\varepsilon(t_1,\ldots,t_n)
 =
 \sum_{\lambda\in\widehat G}
 e^{-\frac{q\varepsilon}{2}c_2(\lambda)}
 \left(\prod_{f\in F}a_{|f|}(\lambda)\right)d_\lambda I_\lambda,
\end{equation}
where $I_\lambda$ is the boundary-conditioned integral of
$\chi_\lambda$ of the boundary word of the unique remaining face.

\begin{figure}[h!]
 \centering
 \includegraphics[width=0.9\linewidth]{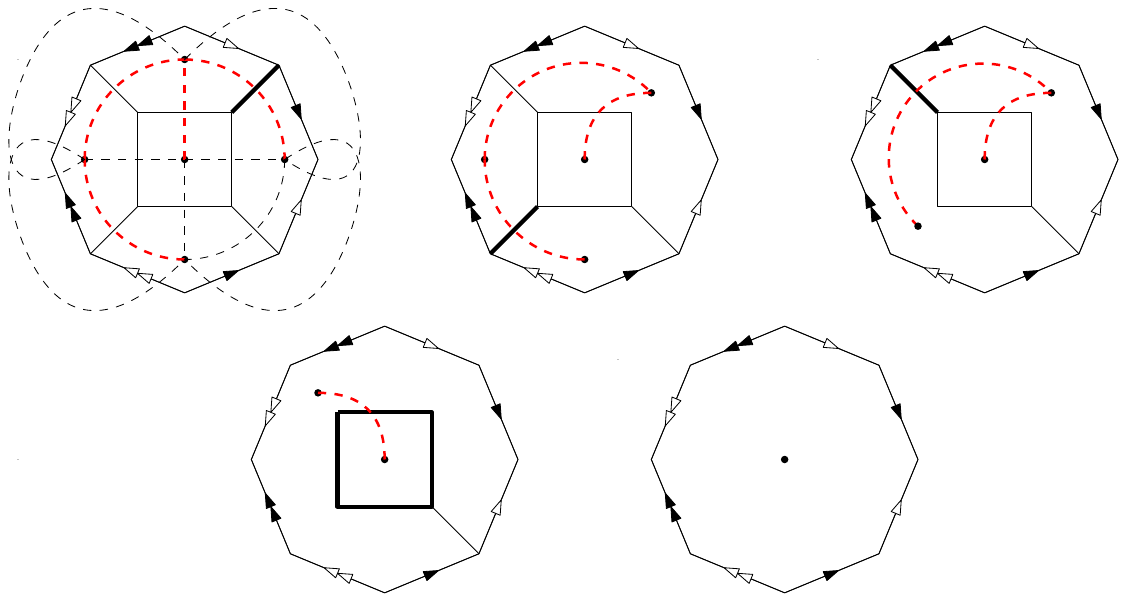}
 \caption{An example of the successive integration for a topological map in $\Sigma_{2,0}$. The dual graph is dashed, and the dual tree is in red. The oriented edges are glued pairwise. In the first step, the whole dual graph is displayed, but all non-tree dual edges are omitted in the next steps. Each step consists in integrating the corresponding primal edge (in bold) to an edge of the dual tree, thereby merging two faces of the primal graph.}
 \label{fig:tree-integration}
\end{figure}

The primal graph remaining after these deletions is connected and has one
face. Apply Lemma~\ref{lem:tree-gauge-fixing} to a spanning tree and then
Lemma~\ref{lem:one-face-normal-form}. Thus
\[
 I_\lambda
 =
 \int_{G^{2g+n}}
 \chi_\lambda\!\left(
 \prod_{i=1}^g[A_i,B_i]\prod_{j=1}^n C_j
 \right)
 \prod_{j=1}^n\delta_{[t_j]}(dC_j)
 \prod_{i=1}^g dA_i\,dB_i.
\]
Using orbital coordinates $C_j=U_jt_jU_j^{-1}$ and Schur orthogonality,
\[
 \int_{G^2}\chi_\lambda([A,B]x)\,dA\,dB
 =\frac{\chi_\lambda(x)}{d_\lambda^2},
 \qquad
 \int_G\chi_\lambda(xUtU^{-1})\,dU
 =\frac{\chi_\lambda(x)\chi_\lambda(t)}{d_\lambda}.
\]
Iterating these identities yields
\[
 I_\lambda
 =d_\lambda^{-2g+1-n}
 \prod_{j=1}^n\chi_\lambda(t_j).
\]
Substitution in~\eqref{eq:PF-one-face-intermediate} gives
\begin{align*}
 Z_\varepsilon(t_1,\ldots,t_n)
 &=
 \sum_{\lambda\in\widehat G}
 e^{-\frac{q\varepsilon}{2}c_2(\lambda)}
 \left(\prod_{f\in F}a_{|f|}(\lambda)\right)
 d_\lambda^{2-2g}
 \prod_{j=1}^n\frac{\chi_\lambda(t_j)}{d_\lambda}.
\end{align*}

Since $Q_{|f|}*p_\varepsilon\to Q_{|f|}$ uniformly on $G$ for each
face, the finite product of plaquette densities converges uniformly and
$Z_\varepsilon\to Z_{\mathbb G,G,Q,\vol}$. On the series side,
$|\chi_\lambda(t_j)|\leq d_\lambda$ and
$e^{-q\varepsilon c_2(\lambda)/2}\leq1$, so the absolute value of the
$\lambda$-term is bounded by
\[
 d_\lambda^{2-2g}
 \prod_{f\in F}|a_{|f|}(\lambda)|,
\]
which is summable by~\eqref{eq:PF-spectral-summability}. Dominated
convergence as $\varepsilon\downarrow0$ proves
\eqref{eq:pf-state-sum-intro} and its absolute convergence.
\end{proof}

\subsection{Wilson loop integrals}

Fix a vertex $o$ of $\mathbb G$ and let $L=(\ell_1,\ldots,\ell_k)$ be a finite regular family of graph loops in $L_o(\mathbb G)$. Let $\Gamma(L)\subset \mathbb G$ be the subgraph formed by the edges visited by the loops. We assume that cutting $\Sigma_{g,n}$ along $\Gamma(L)$ gives finitely many compact surfaces with boundary $C_1,\ldots,C_r .$ An illustration is given in Figure~\ref{fig:ribbon-cut} below (note that only the ribbon graph associated to $L$ is displayed, not the whole topological map $\Gbb$).

\begin{figure}[h!]
 \centering
 \includegraphics[width=\linewidth, trim={-2cm 0cm 0cm 0cm}]{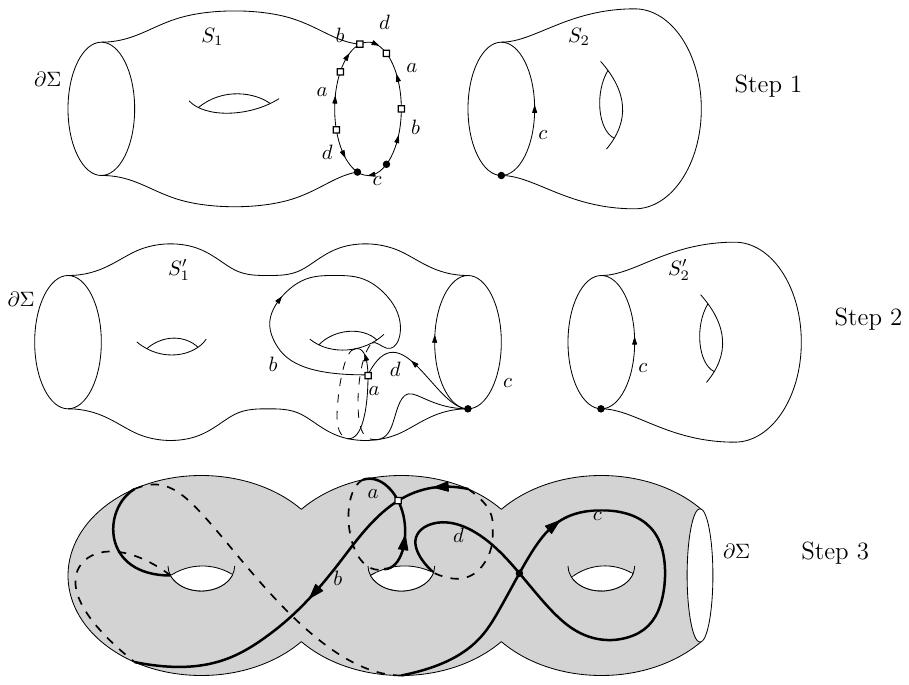}
 \caption{The surface decomposition of the ribbon graph represented by the loop configuration from Figure~\ref{fig:ribbon}. Start with two surfaces $S_1$ and $S_2$ of genus $1$ with boundaries $C_1\cup\partial\Sigma$ and $C_2$, with $C_1=abda^{-1}b^{-1}cd^{-1}$, $C_2=c^{-1}$ (step 1). Glue copies of edges according to their labels and orientation, which first produces two surfaces $S_1'$ and $S_2'$ (step 2), and finally the whole surface $\Sigma$, with a ribbon graph that carries two vertices and four edges. Vertices are coloured and edges are labelled in order to follow the steps.}
 \label{fig:ribbon-cut}
\end{figure}

Let $\mathbb G_\alpha$ be the graph induced by $\mathbb G$ on $C_\alpha$, and let $F_\alpha$ be the set of faces of $\mathbb G$ contained in $C_\alpha$. Choose an orientation of every edge of $\Gamma(L)$. If $U=(U_e)_{e\in E_L}\in G^{E_L}$ is a configuration on the loop edges, every boundary component of every $C_\alpha$ has a holonomy
\[
 x_{\alpha,q}(U)\in G,\qquad 1\leq q\leq m_\alpha .
\]
These boundary holonomies are of two types:
\begin{enumerate}
\item if the boundary component comes from an original boundary circle of
$\Sigma_{g,n}$, then it has the form $x_{\alpha,q}(U)=t_j$ up to conjugacy,
\item if it comes from a side of the cut along $L$, then it is a word in the variables $U_e^{\pm1}$.
\end{enumerate}
Likewise, for each loop $\ell_i$, we denote by $h_{\ell_i}(U)$ the corresponding word in the variables $U_e^{\pm1}$.

Recall from the introduction that, for a labelling $\Lambda:\{C_1,\ldots,C_r\}\longrightarrow \widehat G,$ we define the spectral coefficient
\[
 \kappa_{\mathbb G,Q,\mathrm{vol}}(\Lambda)
 =
 \prod_{\alpha=1}^r
 \prod_{f\in F_\alpha}
 a_{|f|}\bigl(\Lambda(C_\alpha)\bigr),
\]
where $a_t(\lambda)=\frac{1}{d_\lambda}\langle Q_t,\chi_\lambda\rangle$, and the topological coefficient by
\[
\begin{aligned}
\widehat W_{F,L}(\Lambda)=\int_{G^{E_L}}F\bigl(h_{\ell_1}(U),\ldots,h_{\ell_k}(U)\bigr)\prod_{\alpha=1}^r\left[d_{\Lambda(C_\alpha)}^{\chi(C_\alpha)}\prod_{q=1}^{m_\alpha}\chi_{\Lambda(C_\alpha)}\bigl(x_{\alpha,q}(U)\bigr)\right]\,dU.
\end{aligned}
\]
Here $dU$ is the product Haar probability measure on $G^{E_L}$.

\begin{proof}[Proof of Theorem~\ref{thm:Fourier_Wilsonloop}]
Let $E_L\subset E$ be the set of edges belonging to the loop graph $\Gamma(L)$. For each $\alpha$, let $E_\alpha'$ be the set of edges of the induced graph on $C_\alpha$ which do not belong to the cut graph; this includes edges carried by an original boundary component of $\Sigma_{g,n}$. Then
\[
 E=E_L\sqcup E_1'\sqcup\cdots\sqcup E_r'.
\] The face set decomposes as
\[
 F=F_1\sqcup\cdots\sqcup F_r .
\]
For a fixed cut-edge configuration
\[
 U=(U_e)_{e\in E_L}\in G^{E_L},
\]
the integral over the remaining variables factorizes over the components $C_\alpha$. Indeed, once the variables $U$ are fixed, each component $C_\alpha$ is an ordinary lattice gauge theory on $\mathbb G_\alpha$, with plaquette weights $Q_{|f|}$, $f\in F_\alpha$, and with boundary holonomies
\[
 x_{\alpha,1}(U),\ldots,x_{\alpha,m_\alpha}(U).
\]
Therefore
\[
\begin{aligned}
&\int_{\Omega^1(\mathbb G,G)}
 F\bigl(\operatorname{Hol}_{\ell_1}(U),\ldots,
 \operatorname{Hol}_{\ell_k}(U)\bigr)\,
 d\mu_{\mathbb G,G,Q,\mathrm{vol},\mathfrak t}(U) \\
&\qquad =
\int_{G^{E_L}}
 F\bigl(h_{\ell_1}(U),\ldots,h_{\ell_k}(U)\bigr)
 \prod_{\alpha=1}^r
 Z_{\mathbb G_\alpha,G,Q,\mathrm{vol}}
 \bigl(x_{\alpha,1}(U),\ldots,x_{\alpha,m_\alpha}(U)\bigr)
 \,dU .
\end{aligned}
\]
Here the notation for $Z_{\mathbb G_\alpha,G,Q,\mathrm{vol}}$ means the partition function of the component $C_\alpha$ with the displayed boundary holonomies. For boundary components inherited from $\partial\Sigma_{g,n}$, the holonomy is fixed in the conjugacy class $[t_j]$; for cut boundary components, it is the corresponding word in the variables $U_e^{\pm1}$.

By the partition-function formula applied to $C_\alpha$, we have
\[
\begin{aligned}
Z_{\mathbb G_\alpha,G,Q,\mathrm{vol}}
 \bigl(x_{\alpha,1},\ldots,x_{\alpha,m_\alpha}\bigr)
=
\sum_{\lambda_\alpha\in\widehat G}
 \left(
 \prod_{f\in F_\alpha}
 a_{|f|}(\lambda_\alpha)
 \right)
 d_{\lambda_\alpha}^{\chi(C_\alpha)}
 \prod_{q=1}^{m_\alpha}
 \chi_{\lambda_\alpha}(x_{\alpha,q}).
\end{aligned}
\]
Indeed this is the formula of Theorem~\ref{thm:Fourier_PF}. Substituting this expansion for every component $C_\alpha$, and collecting the labels $\Lambda(C_\alpha)=\lambda_\alpha,$ we obtain
\[
\begin{aligned}
&\int_{\Omega^1(\mathbb G,G)}
 F\bigl(
 \operatorname{Hol}_{\ell_1}(U),\ldots,
 \operatorname{Hol}_{\ell_k}(U)
 \bigr)\,
 d\mu_{\mathbb G,G,Q,\mathrm{vol},\mathfrak t}(U) \\
&\quad =
\sum_{\Lambda:\{C_1,\ldots,C_r\}\to\widehat G}
 \left[
 \prod_{\alpha=1}^r
 \prod_{f\in F_\alpha}
 a_{|f|}\bigl(\Lambda(C_\alpha)\bigr)
 \right]   \\
&\qquad\qquad \times
 \int_{G^{E_L}}
 F\bigl(h_{\ell_1}(U),\ldots,h_{\ell_k}(U)\bigr)
 \prod_{\alpha=1}^r
 \left[
 d_{\Lambda(C_\alpha)}^{\chi(C_\alpha)}
 \prod_{q=1}^{m_\alpha}
 \chi_{\Lambda(C_\alpha)}
 \bigl(x_{\alpha,q}(U)\bigr)
 \right]
 dU .
\end{aligned}
\]
The component expansions converge absolutely and uniformly in their
boundary variables. Indeed, if $C_\alpha$ has genus $g_\alpha$ and
$m_\alpha$ boundary components, then the absolute value of its
$\lambda$-term is bounded uniformly in the boundary holonomies by
\[
 d_\lambda^{\chi(C_\alpha)+m_\alpha}
 \prod_{f\in F_\alpha}|a_{|f|}(\lambda)|
 =d_\lambda^{2-2g_\alpha}
 \prod_{f\in F_\alpha}|a_{|f|}(\lambda)|,
\]
which is summable by~\eqref{eq:component-spectral-summability}. Since
there are finitely many components and $F$ is bounded, Tonelli's
theorem justifies the exchange of all label sums with the cut-edge
integral. This is exactly~\eqref{eq:decomp_spectrale_WL}.
\end{proof}

\begin{remark}\label{rem:topological-coefficient-subdivision}
The topological coefficient is unchanged if an edge of the loop graph is subdivided. Indeed, if an oriented edge $e$ is replaced by two consecutive edges $e_1e_2$, then every loop word and every cut-boundary word depends on the corresponding variables only through the product $U_{e_1}U_{e_2}$. The push-forward of product Haar measure under
\[
 G^2\longrightarrow G,
 \qquad
 (U_{e_1},U_{e_2})\longmapsto U_{e_1}U_{e_2},
\]
is Haar measure. Hence integration over one of the two variables reduces the subdivided coefficient to the original coefficient, exactly as in the proof of Proposition~\ref{prop:edge_fusion}. By iteration, $\widehat W_{F,L}(\Lambda)$ depends only on the minimal marked loop graph, with unmarked vertices of degree two suppressed.
\end{remark}

\section{Holonomy processes and scaling limits}
\label{sec:loop-space-regular-processes}

In this section, we prove the universality theorem, Theorem~\ref{thm:universality}. Throughout, $\Sigma$ is a compact connected oriented Riemannian surface, possibly with smooth boundary, endowed with a smooth positive area density. We fix a base point $o\in\operatorname{int}(\Sigma).$ The structure group $G$ is a compact connected Lie group endowed with a bi-invariant distance $d_G$. Unless explicitly stated otherwise, every regular loop family is based at $o$ and has image contained in $\operatorname{int}(\Sigma)$. This restriction concerns only the regular test families used to formulate convergence; arbitrary rectifiable based loops are recovered by approximation and stochastic continuity.

\subsection{Regular loop families and stable approximations}

We denote by $P(\Sigma)$ the set of rectifiable paths in $\Sigma$, modulo the equivalence relation used by L\'evy, and by $P_{x,y}(\Sigma)$ the set of such paths from $x$ to $y$. The concatenation of two (equivalent classes of) paths $\gamma_1$ and $\gamma_2$ is denoted by $\gamma_1\gamma_2$, and the inverse of a path $\gamma$ is denoted by $\gamma^{-1}$. The set of rectifiable loops based at $o$ is $L_o(\Sigma):=P_{o,o}(\Sigma).$ We use L\'evy's metric $d_1$ on rectifiable paths, as defined in \cite[Section~1.2.2]{Lev10} that we recall here.

For $\gamma_1,\gamma_2\in P(\Sigma)$, we define the distance
\begin{equation}
 d_1(\gamma_1,\gamma_2)
 = \inf_{\text{param}} \left\{
 \sup_{t\in [0,1]} d\big(\gamma_1(t),\gamma_2(t)\big)
 + \int_0^1 d_{T\Sigma}\big(\dot{\gamma}_1(t),\dot{\gamma}_2(t)\big)\,dt
 \right\},
\end{equation}
where the infimum is taken over all reparametrizations of $\gamma_1$ and $\gamma_2$ on $[0,1]$. 
The quantity $d_{T\Sigma}(X,Y)$, for $X\in T_x\Sigma$ and $Y\in T_y\Sigma$ with $x,y\in \Sigma$, is defined by
\begin{equation}
 d_{T\Sigma}(X,Y)^2= 
 \begin{cases}
 d(x,y)^2 + \big\|P_{x\to y}X - Y\big\|_{T_y\Sigma}^2 
 & \text{if $x$ and $y$ can be joined by a unique geodesic}, \\
 1 & \text{otherwise}.
 \end{cases}
\end{equation}
Here $P_{x\to y}$ denotes the parallel transport operator along the unique geodesic joining $x$ and $y$. 
For a precise discussion of why this defines a well-posed metric, see \cite[Section~1.2.2]{Lev10}.
\begin{theorem}[\cite{Lev10}, Propositions~1.2.12 and~1.2.14]
\label{thm:Levy-d1-complete-rewrite}
For every $x,y\in\Sigma$, the metric space $(P_{x,y}(\Sigma),d_1)$ is complete. Moreover, piecewise geodesic paths with fixed endpoints are dense in $P_{x,y}(\Sigma)$ for $d_1$.
\end{theorem}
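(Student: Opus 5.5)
We would prove both assertions by reducing to a Sobolev-type statement in a fixed Euclidean setting. First, embed $\Sigma$ isometrically in some $\R^N$ (Nash), or work with finitely many charts. Call $d_1^{\mathrm{euc}}$ the analogue of $d_1$ in which $d_{T\Sigma}$ is replaced by the Euclidean distance on $T\R^N$. On the set where $d(x,y)$ is smaller than the injectivity radius, $d_{T\Sigma}$ is bi-Lipschitz equivalent to the Euclidean distance between $(x,X)$ and $(y,Y)$, with constants depending only on the geometry of $\Sigma$. Elsewhere $d_{T\Sigma}=1$, which is comparable to a truncated Euclidean distance because velocities of paths of bounded length are integrable. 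We expect the resulting comparison to give, on the set of paths of length at most $R$, an inequality $c\,\min(1,d_1^{\mathrm{euc}})\le d_1\le C\,d_1^{\mathrm{euc}}$, with constants that may depend on $R$. Note also that the length functional is $1$-Lipschitz for $d_1$, since $\bigl|\|X\|-\|Y\|\bigr|\le d_{T\Sigma}(X,Y)$ up to the truncation. Hence a $d_1$-Cauchy sequence has bounded lengths, so the whole argument stays inside such a set.

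For completeness, take a Cauchy sequence $(\gamma_n)$ and pass to a subsequence with $d_1(\gamma_k,\gamma_{k+1})<2^{-k}$. We then choose parametrizations inductively: given a parametrization $c_k$ of $\gamma_k$, we pick a parametrization $c_{k+1}$ of $\gamma_{k+1}$ with $\|c_k-c_{k+1}\|_\infty+\|\dot c_k-\dot c_{k+1}\|_{L^1}<2^{-k+1}$. This requires that the infimum in the definition of $d_1$ can be realized up to $\varepsilon$ with the parametrization of the first path fixed. To arrange this, we first show that simultaneously reparametrizing both paths by the same increasing homeomorphism $\phi$ leaves the integral unchanged: the Jacobian factors out, because $d_{T\Sigma}(\lambda X,\lambda Y)=\lambda\, d_{T\Sigma}(X,Y)$ up to the $d(x,y)$ term, which is controlled by the sup part. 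We regard this step as the main technical obstacle: making the reparametrization argument clean while respecting L\'evy's equivalence relation, and checking that the inhomogeneous $d(x,y)^2$ term inside $d_{T\Sigma}$ causes no trouble. Once the parametrizations are chosen, $(c_k)$ is Cauchy in $W^{1,1}([0,1];\R^N)$. Its limit $c$ is absolutely continuous, takes values in $\Sigma$ (which is closed), and has the endpoints $x,y$. It is therefore a rectifiable path, and $d_1(\gamma_k,[c])\to0$. A Cauchy sequence with a convergent subsequence converges, which gives completeness.

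For density, fix $\gamma\in P_{x,y}(\Sigma)$ and parametrize it at constant speed, so that $\gamma$ is Lipschitz with $\dot\gamma\in L^\infty$. For the uniform partition $t_i=i/m$, let $\gamma_m$ be the path joining $\gamma(t_i)$ to $\gamma(t_{i+1})$ by the minimizing geodesic on each interval, parametrized linearly in time. This is well defined once $1/m$ is below the injectivity radius divided by the speed. Uniform convergence $\gamma_m\to\gamma$ is immediate. For the derivatives, we compare in the embedding. On $[t_i,t_{i+1}]$, the velocity $\dot\gamma_m$ differs from the average $m\int_{t_i}^{t_{i+1}}\dot\gamma$ by $O(1/m)$, because the second fundamental form of $\Sigma$ is bounded. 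The piecewise averages $E_m\dot\gamma$ (conditional expectations on the partition) converge to $\dot\gamma$ in $L^1$ by the martingale convergence theorem, or by Lebesgue differentiation. Hence $\|\dot\gamma_m-\dot\gamma\|_{L^1}\to0$, and the comparison of metrics from the first paragraph gives $d_1(\gamma_m,\gamma)\to0$.
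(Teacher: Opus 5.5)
Your proposal is correct in substance, but it takes a different route from the paper, which does not prove the statement directly. The paper quotes L\'evy for completeness of $(P(\Sigma),d_1)$ and for density of piecewise geodesic paths in fixed-endpoint classes. It then only adds that $P_{x,y}(\Sigma)$ is closed, because the endpoint maps are continuous (indeed $1$-Lipschitz) for $d_1$. You instead give a self-contained argument. Lengths stay bounded along Cauchy sequences, and coherent reparametrization turns a fast $d_1$-Cauchy sequence into a $W^{1,1}([0,1];\R^N)$-Cauchy sequence. Its uniform limit automatically has endpoints $x,y$, so closedness comes for free. Density then follows from geodesic interpolation, where the curvature bound gives $\|\dot\gamma_m-E_m\dot\gamma\|_\infty=O(1/m)$ because $\gamma_m$ and $\gamma$ agree at the partition points. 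The citation buys brevity and L\'evy's careful treatment of admissible parametrizations. Your route buys transparency and works directly with the infimum-over-reparametrizations form of $d_1$ written in the paper.

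A few points need tightening:
\begin{itemize}
\item Your pointwise claim that $d_{T\Sigma}$ is bi-Lipschitz to the Euclidean distance, with constants depending only on $\Sigma$, is false. The correct estimate is $\bigl|\,\|P_{x\to y}X-Y\|-|X-Y|\,\bigr|\le C\,d(x,y)\,|X|$. Only after integrating $|X|=|\dot c_1|$ do you get the $R$-dependent comparison you actually use.
\item Your ``main obstacle'' is easy. Since $\|P_{x\to y}X-Y\|\le d_{T\Sigma}(X,Y)\le d(x,y)+\|P_{x\to y}X-Y\|$, and the middle quantity is exactly $1$-homogeneous, replacing a near-optimal pair $(a,b)$ by $(a\circ\phi,b\circ\phi)$ at most doubles the functional. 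That is all summability needs.
\item The limit $c$ may have stationary intervals or a zero-speed set of positive measure. To conclude $d_1(\gamma_k,[c])\to0$, you must either admit such parametrizations in the infimum or add a short density step. For example, replace the normalized arc-length function $s$ of $c$ by the strictly increasing $(1-\varepsilon)s+\varepsilon\,\mathrm{id}$; this changes $c$ by $o(1)$ in $W^{1,1}$.
\item Uniform partitions are not nested, so martingale convergence does not apply directly. Use your Lebesgue-differentiation alternative, together with dominated convergence (valid since $|\dot\gamma|$ is constant).
\end{itemize}
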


\begin{proof}
L\'evy's propositions give completeness of $P(\Sigma)$ and density of piecewise geodesic paths in the corresponding fixed-endpoint classes. The initial- and terminal-point maps are continuous for $d_1$, directly from the definition of that metric. Hence $P_{x,y}(\Sigma)$ is closed in $P(\Sigma)$ and is therefore complete.
\end{proof}
\begin{definition}\label{def:holonomy-process}
A holonomy process on the pointed surface $(\Sigma,o)$ with structure group $G$ is a family $H=(H_\ell)_{\ell\in L_o(\Sigma)}$ of $G$-valued random variables on a common probability space such that, almost surely,
\[ \begin{cases}
 H_{1_o}=1_G \\
 \forall \ell_1,\ell_2\in L_o(\Sigma), H_{\ell_1\ell_2}=H_{\ell_1}H_{\ell_2} \\
 \forall \ell\in L_o(\Sigma), H_{\ell^{-1}}=H_\ell^{-1}
\end{cases}.
\]
Equivalently, it is a random element of $\Hom(L_o(\Sigma),G)$ equipped with the cylinder $\sigma$-field generated by the evaluation maps. If $\Gbb$ is a topological map in $\Sigma$ containing $o$ as a vertex, a holonomy process on $\Gbb$ is defined in the same way with $L_o(\Gbb)$ in place of $L_o(\Sigma)$.
\end{definition}

Our terminology is deliberately different from the one introduced in \cite{Lev10}. A holonomy process is a single multiplicative stochastic process on the based-loop group of one fixed pointed surface (or one fixed graph), whereas a two-dimensional Markovian holonomy field consists of a coherent family of finite measures, one for each measured marked surface with $G$-constraints, on multiplicative $G$-valued functions of paths. These measures satisfy, among other properties, covariance under the relevant surface maps, compatibility with changes of constraints, factorization under disjoint unions, and cutting--gluing axioms encoding the two-dimensional Markov property. Regular Markovian holonomy fields satisfy in addition the continuity requirements of \cite[Definition~3.1.3]{Lev10}.

Fixing a measured surface $\Sigma$ and choosing a base point $o$, the corresponding Markovian holonomy field canonically determines a finite gauge-invariant measure on based-loop holonomies, hence a measure on $\Hom(L_o(\Sigma),G)/G$. Whenever its total mass is non-zero, we normalize it. Choosing a conjugation-invariant realization of this quotient law then gives a holonomy process in the sense of Definition~\ref{def:holonomy-process}. The choice of realization is immaterial for the gauge-invariant observables considered in this paper. Conversely, multiplicativity on one fixed surface alone does not imply the covariance, consistency, or Markov axioms required of a Markovian holonomy field. Thus a Markovian holonomy field determines fixed-surface holonomy processes up to the natural gauge equivalence, but the two notions are not synonymous.

Whenever $\psi$ is the symbol of an admissible L\'evy process, L\'evy's construction \cite[Theorem~4.3.1]{Lev10} gives a regular Markovian holonomy field. Throughout the rest of this paper, $H^\psi$ denotes the holonomy process on the fixed pointed surface $(\Sigma,o)$ induced by that field (with the prescribed boundary constraints, when present).

The index sets of a graph process and of a surface process are different. Rather than extending graph holonomies noncanonically to all surface loops, we approximate finite loop configurations while preserving their embedded topology.

\begin{definition}\label{def:regular-loop-family}
A finite family of based rectifiable loops $L=(\ell_1,\ldots,\ell_r)$ is called \emph{regular} if its image is contained in $\operatorname{int}(\Sigma)$ and the union of the images of its loops is a finite embedded ribbon graph $\Gamma(L)\subset\operatorname{int}(\Sigma).$ The vertices are the base point, the self-intersection points and the mutual intersection points. The edges are embedded piecewise-smooth arcs with pairwise disjoint interiors. We always take $\Gamma(L)$ to be minimal: all unmarked vertices of degree two are suppressed.
\end{definition}

Note that the ribbon graph drawn in Figure~\ref{fig:ribbon} is in fact $\Gamma(L)$ for a regular family $L=(\ell_1,\ell_2)$ of loops in $\Sigma_{3,1}$.

\begin{definition}\label{def:same-ribbon-type}
Two finite regular based loop families
\[
 L=(\ell_1,\ldots,\ell_r),
 \qquad
 L'=(\ell'_1,\ldots,\ell'_r),
\]
have the same \emph{marked ambient ribbon type} if there exists an orientation-preserving homeomorphism $h:\Sigma\longrightarrow\Sigma$ isotopic to the identity relative to $\{o\}\cup\partial\Sigma$\footnote{Meaning that there exists an isotopy from the identity to $h$ that fixes $o$ and $\partial\Sigma$ pointwise at every time.}, such that
\[
 h(\Gamma(L))=\Gamma(L')
\]
and, for every $i$, $h\circ\ell_i=\ell'_i$ as elements of $P(\Sigma)$. Such a homeomorphism induces a canonical bijection between the connected components of the complements, as well as between the corresponding cut-open surfaces and their marked boundary cycles.
\end{definition}

\begin{definition}\label{def:mesh-rewrite}
Let $\Gbb=(V,E,F)$ be a topological map embedded in $\Sigma$. We write
\[
 \mesh(\Gbb)
 =
 \max\left\{
 \max_{e\in E}\ell(e),
 \max_{f\in F}\operatorname{diam}(\overline f)
 \right\}.
\]
A sequence $(\Gbb_n)_{n\geq1}$ has mesh tending to zero if $\mesh(\Gbb_n)\to0$. We always require $o\in V(\Gbb_n)$.
\end{definition}

\begin{lemma}\label{lem:mesh-implies-face-area}
If $\mesh(\Gbb_n)\to0$, then
\[
 \max_{f\in F(\Gbb_n)}|f|\xrightarrow[n\to\infty]{}0.
\]
\end{lemma}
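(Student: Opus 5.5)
The plan is to bound the area of each face by a uniform function of its diameter, and then let the mesh go to zero. Since $\mesh(\Gbb_n)\to0$, every face $f\in F(\Gbb_n)$ satisfies $\operatorname{diam}(\overline f)\leq\mesh(\Gbb_n)=:\delta_n\to0$. It therefore suffices to show that
\[
 \omega(\delta):=\sup\{\vol(A):A\subset\Sigma\text{ Borel},\ \operatorname{diam}(A)\leq\delta\}\longrightarrow0\qquad(\delta\downarrow0).
\]
Once this is proved, we get $\max_f|f|\leq\omega(\delta_n)\to0$.

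To prove $\omega(\delta)\to0$, first note that any set $A$ of diameter at most $\delta$ lies in a closed metric ball $\overline B(x,\delta)$ for any chosen $x\in A$. So it is enough to show that $\sup_{x\in\Sigma}\vol(\overline B(x,\delta))\to0$.

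I will argue by compactness. Suppose the claim fails. Then there exist $\eta>0$, radii $\delta_k\downarrow0$ and centres $x_k\in\Sigma$ with $\vol(\overline B(x_k,\delta_k))\geq\eta$. Passing to a subsequence, we may assume $x_k\to x$. For any fixed $r>0$, we then have $\overline B(x_k,\delta_k)\subset\overline B(x,r)$ for all large $k$. This gives $\vol(\overline B(x,r))\geq\eta$ for every $r>0$.

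On the other hand, $\vol$ is given by a smooth density, so it is absolutely continuous with respect to the Riemannian area and has no atoms. Since $\bigcap_{r>0}\overline B(x,r)=\{x\}$, continuity from above of the finite measure $\vol$ gives $\vol(\overline B(x,r))\to\vol(\{x\})=0$. This contradicts the lower bound $\eta$.

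An alternative, more quantitative route is available. The density is bounded by some constant $c$, and in the compact Riemannian surface balls of radius $r$ have area at most $C r^2$ uniformly in the centre, including near the boundary. Together these give $|f|\leq cC\,\mesh(\Gbb_n)^2$.

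There is no real obstacle here. The only point that needs care is uniformity over centres near $\partial\Sigma$. The compactness argument handles this automatically, so I would use it rather than the explicit bound.
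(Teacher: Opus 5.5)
Your argument is correct and is essentially the paper's: each face lies in a metric ball whose radius is comparable to the mesh, and the areas of such balls tend to zero uniformly in the centre. The only difference is that you justify this uniformity explicitly, by compactness of $\Sigma$ and continuity from above of the atomless finite measure $\vol$, whereas the paper simply asserts it from the smoothness of the density on the compact surface.
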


\begin{proof}
Every face of diameter at most $\frac r2$ is contained in a geodesic ball of radius $r$. Since the area density is smooth on the compact surface,
\[
 \sup_{x\in\Sigma}\operatorname{vol}(B(x,r))\longrightarrow0
 \qquad (r\downarrow0).
\]
The assertion follows from the definition of the mesh.
\end{proof}

\begin{definition}\label{def:Levy-stable-graph-approximation}
Let $L=(\ell_1,\ldots,\ell_r)$ be a regular based loop family. A \emph{stable graph approximation} of $L$ is the data of a sequence of embedded topological maps $(\Gbb_n)_{n\geq1}$, and a sequence of graph-loop families
\[
 L_n=(\ell_{1,n},\ldots,\ell_{r,n}),
 \qquad
 \ell_{i,n}\in L_o(\Gbb_n),
\]
such that:
\begin{enumerate}
\item $\mesh(\Gbb_n)\xrightarrow[n\to\infty]{}0$;
\item for every $i=1,\dots,r$,
\[
 d_1(\ell_{i,n},\ell_i)\xrightarrow[n\to\infty]{}0;
\]
\item for all sufficiently large $n$, there exists a homeomorphism $h_n$ such that $L_n$ and $L$ have the same marked ambient ribbon type; and if $C_1,\ldots,C_m$ are the connected components of $\Sigma\setminus\Gamma(L)$ and
\[
 C_{j,n}:=h_n(C_j),
\]
then
\[
 |C_{j,n}|\longrightarrow|C_j|,
 \qquad j=1,\ldots,m.
\]
\end{enumerate}
We say that $L$ has a stable graph approximation carried by the sequence $(\mathbb{G}_n)$.
The same notation is used for the corresponding cut-open compact surfaces with boundary.
\end{definition}

An illustration is given in Figure~\ref{fig:loop-to-approx}.

\begin{figure}[!h]
 \centering
 \includegraphics[width=0.9\linewidth]{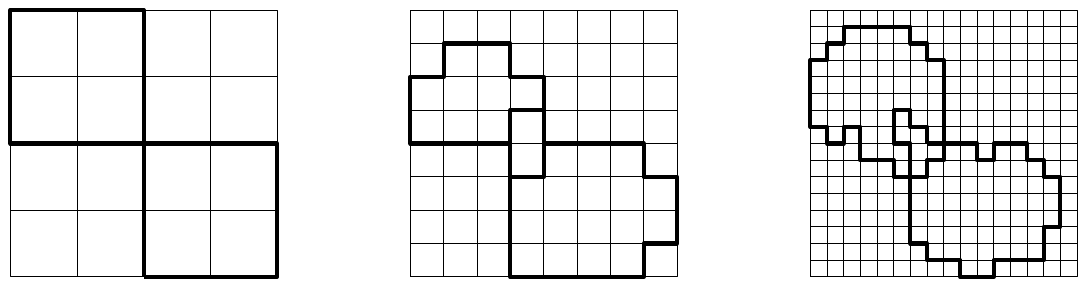}
 \caption{A stable graph approximation, using square lattices, of a configuration of two circles with a small overlap. The overlap is not visible in the first graph but appears in the second and third ones.}
 \label{fig:loop-to-approx}
\end{figure}

\begin{lemma}\label{lem:regular-density-d1}
Let $\ell_1,\ldots,\ell_r\in L_o(\Sigma)$ be rectifiable based loops. Then there exist regular based loop families $L^{(k)}=(\ell_1^{(k)},\ldots,\ell_r^{(k)})$ with image contained in $\operatorname{int}(\Sigma)$ such that, for every $i=1,\dots,r$
\[
 d_1(\ell_i^{(k)},\ell_i)\longrightarrow0.
\]
\end{lemma}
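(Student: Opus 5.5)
We first reduce to piecewise geodesic loops and then put them in general position. By Theorem~\ref{thm:Levy-d1-complete-rewrite}, applied in $P_{o,o}(\Sigma)$, every $\ell_i$ is a $d_1$-limit of piecewise geodesic loops based at $o$. We push each such loop into $\operatorname{int}(\Sigma)$ by composing with a diffeomorphism $\phi_\varepsilon$ of $\Sigma$ that is $C^1$-close to the identity, fixes a neighbourhood of $o$, and maps $\Sigma$ into its interior. One can take $\phi_\varepsilon$ to be the time-$\varepsilon$ flow of an inward vector field supported in a collar of $\partial\Sigma$. A direct estimate from the definition of $d_1$ shows $d_1(\phi_\varepsilon\circ\gamma,\gamma)\le C\varepsilon(1+\ell(\gamma))$. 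We may therefore assume each $\ell_i$ is a piecewise geodesic loop in $\operatorname{int}(\Sigma)$, that is, a finite concatenation of geodesic segments.

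Next we perturb to obtain a finite embedded ribbon graph. The only obstruction is non-transverse intersections: two geodesic segments may overlap along a subarc, or meet tangentially, or infinitely often. We work segment by segment, moving the breakpoints of the polygonal loops, but never $o$, by arbitrarily small amounts. Two short geodesic segments with distinct directions meet in at most one point inside a convex normal neighbourhood, and overlaps occur only when the breakpoints line up. So after first subdividing the segments to be shorter than the convexity radius, a generic small displacement of the breakpoints gives the following. Any two segments, from the same loop or from different loops, meet in finitely many points, transversally, away from the breakpoints other than $o$. Segments through $o$ then meet only at $o$, with distinct tangent directions.

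The set of bad configurations is a finite union of closed sets with empty interior in the finite-dimensional space of breakpoints, so good perturbations exist arbitrarily close. Moving breakpoints by $\delta$ changes each loop by $O(\delta)$ in $d_1$, because geodesics depend smoothly on their endpoints within convex neighbourhoods. The resulting union of images is then a finite graph. Its vertices are $o$, the transverse crossings and the breakpoints, and its edges are piecewise smooth arcs with disjoint interiors. It is embedded, so it carries the canonical ribbon structure. Suppressing unmarked degree-two vertices, namely breakpoints that are not crossings, gives the minimal $\Gamma(L)$ of Definition~\ref{def:regular-loop-family}.

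The main obstacle is the genericity step, especially near $o$ and at self-intersections of a single loop, where a loop may retrace a segment, for instance in a backtracking $\gamma\gamma^{-1}$. A retraced segment cannot be separated by a perturbation without changing the loop as an element of $P(\Sigma)$. However, the statement only requires $d_1$-approximation, not equality of paths. So we simply perturb the two traversals independently, replacing the second copy by a nearby disjoint segment with the same endpoints. This is allowed since only the $d_1$ distance must be small. A diagonal choice $\delta_k\to0$ then yields the family $L^{(k)}$.
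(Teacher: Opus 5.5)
Your skeleton matches the paper's: Lévy's density of piecewise geodesic loops, a collar push into $\operatorname{int}(\Sigma)$, then a small general-position perturbation. The general-position step, however, works differently.

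\textbf{How the two general-position steps differ.} The paper subdivides the pushed loops into smooth embedded arcs and perturbs their interiors in $C^1$, endpoints fixed. The aim is that unrelated arcs are transverse, avoid foreign vertices and share no interval. Because these conditions are open, all perturbations can be taken small simultaneously. Disjoint initial and terminal germs are arranged separately in a small disk around $o$, and finiteness follows from compactness plus discreteness.

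You instead keep geodesic polygons and move finitely many breakpoints, letting geodesic convexity do the work. Two short geodesic segments meet at most once unless they lie on a common geodesic. Distinct geodesics cannot be tangent. Segments leaving $o$ in distinct directions meet only at $o$. This makes finiteness, transversality and the behaviour at $o$ automatic, and reduces genericity to a nowhere-density statement in a finite-dimensional space. The paper's route has the advantage of applying to arbitrary piecewise-smooth arcs.

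\textbf{Step order needs repair.} After composing with $\phi_\varepsilon$ the loops are no longer piecewise geodesic. So ``we may therefore assume each $\ell_i$ is a piecewise geodesic loop in $\operatorname{int}(\Sigma)$'' does not follow, and your convexity arguments need genuine geodesic segments. Two fixes work:
\begin{enumerate}
\item Push first, then approximate by piecewise geodesic loops with $d_1$-error below the distance to $\partial\Sigma$. Since $d_1$ dominates the uniform distance, these stay in a compact subset of the interior, where a positive convexity radius is available.
\item Perturb the breakpoints first and apply $\phi_\varepsilon$ last. An orientation-preserving embedding isotopic to the identity preserves all the regularity properties, and your estimate $d_1(\phi_\varepsilon\circ\gamma,\gamma)\le C\varepsilon(1+\ell(\gamma))$ still applies.
\end{enumerate}

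\textbf{Two smaller points.}
\begin{enumerate}
\item A short geodesic with prescribed endpoints is unique, so a ``nearby disjoint segment with the same endpoints'' must be a broken geodesic with an extra, independently perturbed breakpoint. Inserting such midpoints everywhere guarantees that no two segments share both endpoint variables, which is what your genericity claim actually needs. Retracing by itself does not violate regularity, since the image is still a graph.
\item Bad sets such as ``two segments overlap'' are not closed. They are, however, contained in closed nowhere-dense sets: collinearity, equal directions at a shared endpoint, and a breakpoint lying on a non-incident segment. That suffices for the genericity argument.
\end{enumerate}
With these adjustments the argument is complete.
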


\begin{proof}
By Theorem~\ref{thm:Levy-d1-complete-rewrite}, first approximate the loops simultaneously, with the base point fixed, by piecewise geodesic loops. Choose a smooth collar
\[
 c:\partial\Sigma\times[0,\rho)\longrightarrow\Sigma
\]
whose image does not contain $o$. For sufficiently small $\delta>0$, choose a smooth function $\theta_\delta:[0,\rho)\to(0,\rho)$ such that $\theta_\delta(s)=s$ for $s\ge\rho/2$ and $\theta_\delta\to\operatorname{id}$ in $C^1$ as $\delta\downarrow0$. Define $p_\delta:\Sigma\to\Sigma$ by
\[
 p_\delta(c(x,s))=c(x,\theta_\delta(s))
\]
in the collar and by $p_\delta=\operatorname{id}$ outside it. Then $p_\delta(o)=o$, the image of every path $p_\delta\circ\ell_i$ is contained in $\operatorname{int}(\Sigma)$ since $\theta_\delta$ is on $(0,\rho)$, and $p_\delta\circ\ell_i\to\ell_i$ in $d_1$ as $\delta\downarrow0$.
Subdivide the resulting loops into finitely many smooth embedded arcs. Keeping their endpoints fixed, perturb their interiors successively and arbitrarily slightly in the $C^1$ topology so that every pair of unrelated arcs is transverse, every arc avoids all unrelated vertices, no three arcs meet away from $o$, and no two arcs share a nontrivial interval. In a small disk around $o$, keep $o$ fixed and choose pairwise disjoint initial and terminal germs with the prescribed cyclic order, so that no intersections accumulate at the base point. These conditions are open after each finite step, hence all perturbations can be chosen simultaneously small. The intersection set away from $o$ is then a compact discrete set and is therefore finite. The resulting family is regular. Since edgewise $C^1$ convergence implies convergence in $d_1$, the perturbations may be made arbitrarily small in $d_1$.
\end{proof}

\subsection{Universally stable graph sequences}
\label{subsec:universal-stable-graph-sequences}

\begin{definition}\label{def:universally-stable-graph-sequence}
A sequence of embedded topological maps $(\Gbb_n)_{n\geq1}$ is called \emph{universally stable for regular loop families} if:
\begin{enumerate}
\item $o\in V(\Gbb_n)$ for every $n$, every boundary component of $\Sigma$ is carried by $\Gbb_n$, and $\mesh(\Gbb_n)\to0$;
\item every finite regular based loop family admits a stable graph approximation carried by the sequence $(\Gbb_n)$.
\end{enumerate}
\end{definition}

The main result of this subsection is the following proposition, which asserts the existence of a universally stable graph sequence.

\begin{proposition}\label{prop:existence-universally-stable-sequence}
Let $\Sigma$ be a compact smooth oriented Riemannian surface, possibly with smooth boundary, and let $o\in\operatorname{int}(\Sigma)$. There exists a sequence $(\mathcal T_n)_{n\geq1}$ of smooth triangulations of $\Sigma$, with one-skeleta $\Gbb_n=\mathcal T_n^{(1)}$, such that:
\begin{enumerate}
\item $o\in V(\Gbb_n)$ for every $n$;
\item every boundary component of $\Sigma$ is a subcomplex of $\mathcal T_n$;
\item $\mesh(\Gbb_n)\to0$;
\item $(\Gbb_n)_{n\geq1}$ is universally stable for regular loop families.
\end{enumerate}
\end{proposition}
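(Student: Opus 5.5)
The plan is to build the triangulations from a fixed coarse triangulation, to refine every coarse triangle by a locally affine ``line arrangement'' that contains many directions, and then, for each regular family $L$, to route the edges of $\Gamma(L)$ along lines of the arrangement inside thin tubes. The key point is that $d_1$ controls tangent directions as well as positions. An approximating graph path therefore cannot zigzag across a grid with few directions: the set of available edge directions must become dense as $n\to\infty$, while the mesh still tends to $0$.

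\emph{Construction.} Fix a smooth triangulation $\mathcal T_0$ of $\Sigma$ such that $o$ is a vertex and every boundary circle is a subcomplex. Identify each closed triangle $\tau$ with a Euclidean triangle by a chart $\phi_\tau$ that is smooth up to the boundary; these charts are compatible along common edges because each edge is parametrized proportionally to a fixed parameter. Fix integers $K_n\to\infty$ and spacings $h_n\to0$ with $h_nK_n^2\to0$. In each $\phi_\tau(\tau)$, take the lines in the $K_n$ directions $\pi j/K_n$ with spacing $h_n$, intersect them with the triangle, and add the sides of the triangle. Triangulate the resulting convex polygons by adding diagonals, and subdivide the sides of $\tau$ at the points shared with the neighbouring triangles so that the pieces glue. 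Pulling back by $\phi_\tau^{-1}$ gives a smooth triangulation $\mathcal T_n$. Properties (1) and (2) of Proposition~\ref{prop:existence-universally-stable-sequence} hold by construction. The mesh is $O(h_n)$ up to chart constants, which gives (3). Because the charts are uniformly $C^1$, an edge of $\mathcal T_n$ lying on a line of direction $\theta$ has its tangent within $O(\text{diam})$ of $d\phi_\tau^{-1}(\theta)$.

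\emph{Approximating a given regular family.} Let $L$ be regular, with minimal graph $\Gamma(L)\subset\operatorname{int}(\Sigma)$. By a preliminary isotopy with arbitrarily small $d_1$ displacement, of the type used in Lemma~\ref{lem:regular-density-d1}, assume that the vertices of $\Gamma(L)$ other than $o$ avoid $\mathcal T_0^{(1)}$ and that the edges cross $\mathcal T_0^{(1)}$ transversally. Around every vertex $v$ of $\Gamma(L)$ take a small disk $D_v$; take $D_o$ centred at $o$, which is a vertex of every $\mathcal T_n$. Around every edge $e$ of $\Gamma(L)$ take a tube $N_e$ of width $\varepsilon$; the tubes are pairwise disjoint outside $\bigcup_v D_v$. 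Inside $N_e$, approximate $e$ by a concatenation of segments of the arrangement. Subdivide $e$ into pieces of length $\asymp 1/K_n$ (in the chart). On each piece, follow a line whose direction is within $\pi/K_n$ of the tangent and whose offset is within $h_n$ of the curve. Join consecutive pieces by short transition segments of length $O(h_nK_n)$, so that their total length is $O(h_nK_n^2)\to0$. The resulting path $e_n$ is simple, lies in $N_e$, and satisfies $d_1(e_n,e)\to0$: the position error is $O(h_n+1/K_n)$, and the tangent error is $O(1/K_n)$ except on a set of length tending to $0$. Inside $D_v$, connect the germs to a single vertex $v_n$ of $\mathcal T_n$ near $v$ (with $v_n=o$ if $v=o$). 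Use straight segments of the arrangement in distinct directions, approaching $v_n$ in the prescribed cyclic order; this is possible once $K_n$ exceeds the degree of $v$ and $h_n\ll\operatorname{radius}(D_v)$. Concatenating gives loops $\ell_{i,n}\in L_o(\mathcal T_n^{(1)})$ with $d_1(\ell_{i,n},\ell_i)\to0$.

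\emph{Ribbon type and areas.} Each $e_n$ is isotopic to $e$ inside $N_e$ relative to $\partial D_v$, and the local stars inside the $D_v$ carry the same cyclic orders. Gluing these isotopies gives an ambient isotopy $h_n$, supported in $\bigcup_e N_e\cup\bigcup_v D_v$ and fixing $o$ and $\partial\Sigma$, with $h_n(\Gamma(L))=\Gamma(L_n)$. This yields item (3) of Definition~\ref{def:Levy-stable-graph-approximation}. Since $\vol$ is smooth, each symmetric difference $C_j\triangle h_n(C_j)$ is contained in a region whose area tends to $0$, provided the tube width and disk radii are sent to $0$ along the sequence by a diagonal choice. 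Hence $|C_{j,n}|\to|C_j|$.

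\emph{Main obstacle.} The delicate step is the local combinatorics: simultaneously keeping the approximating paths simple and pairwise disjoint, matching the cyclic orders at vertices (in particular at $o$, which may have many germs), and passing across edges of $\mathcal T_0$, where the chart changes and lines from adjacent triangles need not continue one another. I would handle the last point by allowing one short transition segment near each crossing of $\mathcal T_0^{(1)}$; there are finitely many such crossings, so they add negligible length. I would handle disjointness by choosing distinct parallel lines at distance at least $h_n$ inside each tube.
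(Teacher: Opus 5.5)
Your overall strategy is a legitimate alternative to the paper's. You fix an explicit arrangement whose number of directions grows, so that graph paths can follow tangents as $d_1$ requires; your observation that a square grid would fail is correct. However, the \emph{vertex step} fails as written.

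Definition~\ref{def:regular-loop-family} allows vertices of arbitrary degree: $o$ with many germs, or a point through which several strands pass. Since the marked ribbon type must be preserved exactly, the vertex of $\Gamma(L_n)$ corresponding to $v$ must be a vertex of $\Gbb_n$ of degree at least $\deg_{\Gamma(L)}(v)$; in your scheme this is the point $v_n\in D_v$. Your justification (straight segments of the arrangement in distinct directions, possible once $K_n>\deg v$) presupposes that at least $\deg v$ lines of the arrangement pass through $v_n$. For parallel families with a common spacing $h_n$, a typical vertex lies on only two or three of the lines. Points through which many directions pass are exceptional, for instance a common origin of the families when the offsets are aligned, and you give no reason why one lies in every small disk. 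You also do not use the unspecified diagonals to raise degrees. The same problem arises at $o$ unless the families in every triangle at $o$ are anchored there. So the sequence you define is not shown to be universally stable: a point where many strands cross, at a generic location, may have no admissible image in $\Gbb_n$.

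The repair has to be built into $\mathcal T_n$ independently of $L$. For example, in each chart triangle add the full pencil of the $K_n$ directions through every point of a finite set $S_n$ that contains the corners and whose mesh tends to $0$. The arrangement stays finite with convex cells and mesh $O(h_n)$, and you can take $v_n\in S_n\cap D_v$.

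This is precisely the difficulty the paper's construction sidesteps. The paper enumerates a countable $C^1$-dense family $\mathscr D$ with infinite repetition, inserts the $n$-th member into a growing graph after a PL general-position perturbation, and triangulates relative to it. So $\Gbb_n$ literally contains perturbed copies of model families, with vertices of any degree, and $d_1$-control is inherited from the density of $\mathscr D$. Your route is more explicit, but it must manufacture both the directions and these high-degree hubs by hand.

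Two smaller points.
\begin{enumerate}
\item A line with direction error up to $\pi/(2K_n)$ drifts by order $K_n^{-2}\gg h_n$ from the curve over a piece of length $K_n^{-1}$. Consecutive lines may therefore be parallel, or meet at a distance comparable to a whole piece, so your $O(h_nK_n)$ transitions are not justified. Replace them by short transverse arrangement segments of length $O(h_n+K_n^{-2})$, which gives total transition length $O(h_nK_n+K_n^{-1})\to0$.
\item The preliminary transversality perturbation must itself tend to zero along the sequence, diagonalized together with the tube widths. Convergence in $d_1$ and of areas is required for $L$ itself, not for a fixed perturbation of it.
\end{enumerate}
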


The proof of Proposition~\ref{prop:existence-universally-stable-sequence} is deferred to Section~\ref{sec:proof_prop_univ_stable}, because it requires several technical results from PL topology and would interrupt the flow of the proofs. Note that L\'evy proved that every finite family of piecewise-geodesic paths is carried by a piecewise-geodesic graph and that every embedded graph admits arbitrarily close piecewise-geodesic approximations preserving its facial combinatorics and approximately preserving its face areas; see \cite[Propositions 1.4.8 and 1.4.10]{Lev10}. Proposition~\ref{prop:existence-universally-stable-sequence} is a sequential and simultaneous version adapted to scaling limits: it constructs a single vanishing-mesh sequence of graphs along which every finite regular loop family can be approximated while preserving its marked ambient ribbon type and the areas of its complementary components.

\begin{proposition}\label{prop:Levy-stable-existence}
Every finite regular based loop family admits a stable graph approximation.
\end{proposition}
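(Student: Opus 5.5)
The quickest route is to deduce the statement from Proposition~\ref{prop:existence-universally-stable-sequence}. Once a universally stable sequence $(\Gbb_n)$ exists on $\Sigma$, Definition~\ref{def:universally-stable-graph-sequence} says that every finite regular based loop family admits a stable graph approximation carried by $(\Gbb_n)$. In particular, such an approximation exists. So the proposition is a corollary, and the plan is only to check that no hypothesis is lost, since both statements use the same notion of regular family and the same base point $o\in\operatorname{int}(\Sigma)$.

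Because that proposition is deferred, I would also outline a direct argument, following L\'evy's \cite[Propositions~1.4.8 and~1.4.10]{Lev10}. The construction has four steps.

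First, fix a regular family $L$ with minimal ribbon graph $\Gamma(L)\subset\operatorname{int}(\Sigma)$. Complete $\Gamma(L)$ to an embedded topological map $\Gbb^0$ containing the boundary circles. To do this, add finitely many arcs inside each complementary component $C_j$ until all faces are disks, and add arcs joining $\partial\Sigma$ to the rest. The new arcs must lie in the interior of the $C_j$, so that $\Gamma(L)$ remains a subgraph.

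Second, refine $\Gbb^0$ by barycentric-type subdivisions of its faces and edges to obtain maps $\Gbb_n$ with $\mesh(\Gbb_n)\to0$. Smoothness of the edges and compactness give the bound on face diameters after repeated subdivision, in a smooth triangulation compatible with $\Gbb^0$. Every $\Gbb_n$ contains $\Gamma(L)$ as a union of edge paths, so each loop $\ell_i$ is exactly a loop $\ell_{i,n}\in L_o(\Gbb_n)$.

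Third, with $L_n=L$, $d_1(\ell_{i,n},\ell_i)=0$ and $h_n=\mathrm{id}$. The ribbon type and the areas $|C_{j,n}|=|C_j|$ are then preserved trivially. This checks all three conditions of Definition~\ref{def:Levy-stable-graph-approximation}.

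The main obstacle is the first step. The edges of $\Gamma(L)$ are only piecewise smooth, and they may meet at $o$ or at intersection points with tangential germs. We need a smooth triangulation of $\Sigma$ in which $\Gamma(L)$ and $\partial\Sigma$ are subcomplexes, and whose simplices have controlled shape, so that iterated subdivision really sends the mesh to zero.

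I would first try to triangulate each cut-open surface $\overline{C_j}$ separately. Each one is a compact surface with piecewise-smooth boundary, and I would use a smooth collar of its boundary together with a standard triangulation of the interior. The pieces glue along the edges of $\Gamma(L)$ after common subdivision of the boundary arcs.

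If corner or cusp behaviour at intersection points causes trouble, I would do something different. Using the flexibility of Definition~\ref{def:Levy-stable-graph-approximation}, I would replace $L$ by nearby piecewise-geodesic loops $L_n$ isotopic to $L$ rel $\{o\}\cup\partial\Sigma$, as in Lemma~\ref{lem:regular-density-d1}. I would then correct the areas by a small area-adjusting isotopy supported near the edges, as in L\'evy's Proposition~1.4.10. For piecewise-geodesic graphs, the triangulation step is standard.
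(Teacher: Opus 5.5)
Your first paragraph is exactly the paper's proof: the proposition is obtained as an immediate consequence of Definition~\ref{def:universally-stable-graph-sequence} and Proposition~\ref{prop:existence-universally-stable-sequence}. Your supplementary direct argument also works for a single family, whether with $L_n=L$ and $h_n=\mathrm{id}$ or with the piecewise-geodesic fallback when tangential germs obstruct a smooth triangulation. The reason is that Definition~\ref{def:Levy-stable-graph-approximation} does not require the maps $\Gbb_n$ to be nested or independent of $L$, and it only asks for $|C_{j,n}|\to|C_j|$, so no area correction is needed; the real work behind the paper's route is making one graph sequence serve all regular families simultaneously.
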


\begin{proof}
It follows from Definition~\ref{def:universally-stable-graph-sequence}, and Proposition~\ref{prop:existence-universally-stable-sequence}. 
\end{proof}

\subsection{Regular convergence of holonomy processes}

\begin{definition}\label{def:regular-process-convergence}
Let $(\Gbb_n)$ be universally stable. Let, for each $n$, $H_n$ be a holonomy process on $\Gbb_n$\footnote{Which we recall is just a random element of $\Hom(L_o(\Gbb_n),G)$.}. Let $H$ be a holonomy process on $\Sigma$. We say that $H_n$ converges \emph{regularly} to $H$ if, for every finite regular based loop family $L=(\ell_1,\ldots,\ell_r)$ and every stable graph approximation $L_n=(\ell_{1,n},\ldots,\ell_{r,n})$ carried by $(\Gbb_n)$,
\[
 \bigl(H_n(\ell_{1,n}),\ldots,H_n(\ell_{r,n})\bigr)
 \Longrightarrow
 \bigl(H_{\ell_1},\ldots,H_{\ell_r}\bigr)
\]
as probability measures on $G^r$.
\end{definition}

\begin{lemma}\label{lem:invariant-tests-determine-weak-convergence}
Let $\nu_n$ and $\nu$ be probability measures on $G^r$ invariant under diagonal conjugation. If $\int F\,d\nu_n\rightarrow\int F\,d\nu$ for every $F\in C(G^r)^{\operatorname{Ad}G}$, then $\nu_n\Rightarrow\nu$.
\end{lemma}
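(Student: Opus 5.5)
The plan is to show that the integrals of diagonally conjugation-invariant continuous functions separate conjugation-invariant probability measures on $G^r$. Combined with compactness, this gives weak convergence. Since $G^r$ is compact, the sequence $(\nu_n)$ is tight, so by Prokhorov every subsequence has a further subsequence converging weakly to some probability measure $\nu'$. It therefore suffices to prove that every such subsequential limit equals $\nu$.

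First, $\nu'$ is again invariant under diagonal conjugation. For $g\in G$ and $F\in C(G^r)$, write $F^g(x_1,\ldots,x_r)=F(gx_1g^{-1},\ldots,gx_rg^{-1})$. Then $\int F^g\,d\nu_{n}=\int F\,d\nu_n$ for all $n$, and passing to the limit along the subsequence gives $\int F^g\,d\nu'=\int F\,d\nu'$. Second, the hypothesis, applied to invariant $F$ along the subsequence, gives $\int F\,d\nu'=\int F\,d\nu$ for every $F\in C(G^r)^{\operatorname{Ad}G}$.

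The key step is to pass from invariant to arbitrary test functions by averaging over the conjugation action with respect to Haar probability measure. For arbitrary $F\in C(G^r)$, set
\[
 \bar F(x)=\int_G F(gx_1g^{-1},\ldots,gx_rg^{-1})\,dg .
\]
This function is continuous, because $F$ is uniformly continuous on a compact set and the action map is continuous. It is invariant under diagonal conjugation by the invariance of Haar measure. For any diagonally invariant probability measure $m$, Fubini gives
\[
 \int\bar F\,dm=\int_G\int F^g\,dm\,dg=\int F\,dm .
\]
Applying this to $m=\nu'$ and $m=\nu$ yields $\int F\,d\nu'=\int\bar F\,d\nu'=\int\bar F\,d\nu=\int F\,d\nu$. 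By the Riesz representation theorem, $\nu'=\nu$.

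Since every subsequence of $(\nu_n)$ has a further subsequence converging to the same limit $\nu$, the whole sequence converges, $\nu_n\Rightarrow\nu$. There is no real obstacle here. The only points needing care are the continuity of $\bar F$ and checking that invariance passes to the weak limit, and both are routine on a compact group.
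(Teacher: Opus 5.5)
Your proof is correct, and it rests on the same key identity as the paper's: Haar averaging over diagonal conjugation gives $\int F\,dm=\int\bar F\,dm$ for every invariant $m$. The Prokhorov and subsequence detour is unnecessary, though. Each $\nu_n$ is invariant by hypothesis, so you can apply the identity to it directly and get $\int F\,d\nu_n=\int\bar F\,d\nu_n\to\int\bar F\,d\nu=\int F\,d\nu$; this is the paper's one-line argument, and it avoids checking that subsequential limits are invariant.
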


\begin{proof}
For $\varphi\in C(G^r)$, define
\[
\varphi^\#(x_1,\ldots,x_r) = \int_G\varphi(gx_1g^{-1},\ldots,gx_rg^{-1})\,dg.
\]
Then $\varphi^\#\in C(G^r)^{\operatorname{Ad}G}$ and invariance gives
\[
\int\varphi\,d\nu_n=\int\varphi^\#\,d\nu_n,\qquad\int\varphi\,d\nu=\int\varphi^\#\,d\nu.
\]
The conclusion follows.
\end{proof}

\begin{definition}\label{def:stochastic-d1-continuity}
A holonomy process $H$ on $\Sigma$ is stochastically $d_1$-continuous if
\[
 d_1(\ell_m,\ell)\longrightarrow0
 \quad\Longrightarrow\quad
 H_{\ell_m}\longrightarrow H_\ell
 \quad\text{in probability}.
\]
\end{definition}

\begin{proposition}\label{prop:Levy-stoch-d1-continuity}
Let $(\mu_t)_{t\ge0}$ be an admissible conjugation-invariant L\'evy semigroup on $G$, and let $H^\mu$ be the holonomy process on $(\Sigma,o)$ induced by its associated regular Markovian holonomy field. Then $H^\mu$ is stochastically $d_1$-continuous.
\end{proposition}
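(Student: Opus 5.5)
The argument reduces stochastic continuity to a uniform estimate on the law of holonomies of loops that are $d_1$-close. Let $d_1(\ell_m,\ell)\to0$. By multiplicativity, $H_{\ell_m}H_\ell^{-1}=H_{\ell_m\ell^{-1}}$. Since $d_G$ is bi-invariant, we have $d_G(H_{\ell_m},H_\ell)=d_G(H_{\ell_m\ell^{-1}},1_G)$. It therefore suffices to show that $H_{\ell_m\ell^{-1}}\to1_G$ in probability. The loop $\ell_m\ell^{-1}$ is not small in $d_1$, so the estimate has to come from geometry. The plan is to invoke L\'evy's own continuity results for regular Markovian holonomy fields. Regularity in the sense of \cite[Definition~3.1.3]{Lev10} is precisely a continuity requirement, and L\'evy proves in \cite[Chapter~3]{Lev10} that a regular field yields random holonomies continuous in probability along $d_1$-convergent sequences of paths with fixed endpoints. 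The paper already quotes \cite[Theorem~4.3.1]{Lev10}, which guarantees that the field attached to an admissible semigroup is regular.

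The first step is to transfer L\'evy's statement to the normalized, based, boundary-conditioned process $H^\mu$. The field gives a finite measure whose mass is the partition function; I will check, for instance via Theorem~\ref{thm:Fourier_PF} applied to a one-face map, that this mass is nonzero. Normalizing by a positive constant preserves convergence in measure, and convergence in measure for the finite measure implies convergence in probability after normalization. Convergence in probability is a statement about the joint law of $(H_{\ell_m},H_\ell)$, and this law is gauge invariant. Hence the choice of conjugation-invariant realization does not matter: the function $d_G(x,y)$ is invariant under diagonal conjugation.

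If I instead wanted a self-contained argument, I would follow L\'evy's route. First, approximate $\ell$ and $\ell_m$ by piecewise geodesic loops using Theorem~\ref{thm:Levy-d1-complete-rewrite} and Lemma~\ref{lem:regular-density-d1}. Next, place both loops in a common graph and write $\ell_m\ell^{-1}$, up to lasso decomposition, as a product of lassos around thin domains of small total area. For a lasso around a disk $D$, the holonomy has law $Q^X_{|D|}$ conjugated by an independent factor. Because the L\'evy process is continuous in probability at time $0$, we get $\mathbb P(d(X_t,1)>\varepsilon)\to0$ as $t\to0$.

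The main obstacle in this direct route is the geometric control. Two $d_1$-close loops need not bound thin domains; they can cross many times, and the number of lassos need not stay bounded. L\'evy handles this with an estimate of the form $\mathbb E[d_G(H_{c_1},H_{c_2})]\lesssim\sqrt{\text{area}}$, which rests on an isoperimetric-type bound controlled by the length term in $d_1$. For that reason I would cite \cite{Lev10} rather than reprove it, and keep only the normalization and gauge-invariance reductions above as the new content.
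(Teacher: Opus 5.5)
Your proposal is correct and follows essentially the paper's route: regularity of the field comes from \cite[Theorem~4.3.1]{Lev10}, and stochastic continuity is the first condition of \cite[Definition~3.1.3]{Lev10} itself, so it holds by definition rather than by a separate result of Chapter~3. The paper also records two points you leave implicit: L\'evy's $L^1$ formulation is equivalent to convergence in probability because $d_G$ is bounded, and $d_1$ induces L\'evy's path topology \cite[Propositions~1.2.14 and~1.2.16]{Lev10}. Your normalization and gauge-invariance remarks are harmless extra bookkeeping; positivity of the mass is seen most easily from strict positivity of the L\'evy densities, not from the character series. The bi-invariance reduction and the lasso sketch are not needed.
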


\begin{proof}
By \cite[Theorem~4.3.1]{Lev10}, an admissible L\'evy process gives rise to a regular Markovian holonomy field, whose restriction to based loops is the holonomy process $H^\mu$. Stochastic continuity is the first regularity requirement in \cite[Definition~3.1.3]{Lev10}. Immediately after that definition, L\'evy notes that, because $G$ is compact and $d_G$ is bounded, the stated $L^1$ convergence is equivalent to convergence in probability. Finally, Propositions~1.2.14 and~1.2.16 of \cite{Lev10} show that the metric $d_1$ induces the path topology used there. This proves the assertion.
\end{proof}

\begin{lemma}[Uniqueness of the $d_1$-continuous extension]\label{lem:unique-d1-extension}
Let $H$ and $\widetilde H$ be stochastically $d_1$-continuous holonomy processes. If their finite-dimensional distributions agree on every finite regular based loop family, then they agree on every finite rectifiable based loop family.
\end{lemma}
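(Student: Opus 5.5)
Fix a finite rectifiable based loop family $(\ell_1,\ldots,\ell_r)$ in $L_o(\Sigma)$. The plan is to approximate it by regular families, for which the two processes are assumed to agree, and then to pass to the limit using stochastic $d_1$-continuity of both processes.

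\textbf{Step 1: approximation.} By Lemma~\ref{lem:regular-density-d1}, there exist regular based loop families $L^{(k)}=(\ell_1^{(k)},\ldots,\ell_r^{(k)})$ such that $d_1(\ell_i^{(k)},\ell_i)\to0$ as $k\to\infty$ for each $i$. The lemma produces all $r$ loops simultaneously, which matters: the joint law of the tuple is what we need, not just the individual marginals.

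\textbf{Step 2: convergence in probability.} Stochastic $d_1$-continuity (Definition~\ref{def:stochastic-d1-continuity}) gives $H_{\ell_i^{(k)}}\to H_{\ell_i}$ in probability for each $i$. Since there are finitely many coordinates and convergence in probability on a product is coordinatewise, the vector $(H_{\ell_1^{(k)}},\ldots,H_{\ell_r^{(k)}})$ converges in probability in $G^r$ to $(H_{\ell_1},\ldots,H_{\ell_r})$, and therefore converges in distribution. The same holds for $\widetilde H$. All random vectors involved for a fixed process live on that process's own probability space, so no coupling between $H$ and $\widetilde H$ is required.

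\textbf{Step 3: conclusion.} For each $k$, the laws of $(H_{\ell_i^{(k)}})_i$ and $(\widetilde H_{\ell_i^{(k)}})_i$ coincide by hypothesis, since $L^{(k)}$ is regular. Weak limits on $G^r$ are unique, so the limiting laws coincide; that is, the finite-dimensional distributions of $H$ and $\widetilde H$ agree on $(\ell_1,\ldots,\ell_r)$.

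The argument has no real obstacle. The only point needing care is that Lemma~\ref{lem:regular-density-d1} supplies a simultaneous regular approximation of the whole family, rather than separate approximations of each loop that might not jointly form a regular family. That simultaneity is exactly what the lemma asserts.
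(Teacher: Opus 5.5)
Your proposal is correct and follows the same route as the paper's proof: approximate the family simultaneously by regular families via Lemma~\ref{lem:regular-density-d1}, use stochastic $d_1$-continuity to get convergence in probability (hence in law) of both holonomy vectors, and conclude by uniqueness of weak limits. You also correctly identify the simultaneous approximation of the whole family as the one point needing care.
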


\begin{proof}
For $L=(\ell_1,\ldots,\ell_r)$, choose regular families $L^{(k)}$ as in Lemma~\ref{lem:regular-density-d1}. Stochastic continuity gives convergence in probability of both holonomy vectors along $L^{(k)}$ to their respective vectors along $L$. Since the two laws agree for every $k$, their weak limits agree.
\end{proof}

\subsection{Proof of the universality theorem}

Recall that
\[
 a_t(\lambda)
 =
 \frac1{d_\lambda}\langle Q_t,\chi_\lambda\rangle.
\]

\begin{lemma}\label{lem:topological-coefficient-marked-invariance}
Let $L$ and $L'$ have the same marked ambient ribbon type, witnessed by $h$. Let $C_1,\ldots,C_m$ be the cut components for $L$, put $C'_j=h(C_j)$, and transport a labelling by
\[
 \Lambda'(C'_j)=\Lambda(C_j).
\]
Then, for every $F\in C(G^r)^{\operatorname{Ad}G}$,
\[
 \widehat W_{F,L'}(\Lambda')
 =
 \widehat W_{F,L}(\Lambda).
\]
\end{lemma}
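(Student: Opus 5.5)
The proof is a transport of structure: the homeomorphism $h$ identifies every ingredient of the integral defining $\widehat W_{F,L}(\Lambda)$ with the corresponding ingredient for $L'$. The identification is first made at the level of combinatorial data and then at the level of integrals.

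First, $h$ maps $\Gamma(L)$ onto $\Gamma(L')$ and preserves orientation. Since both graphs are minimal, $h$ sends vertices to vertices: the base point, intersection points and self-intersection points are topologically characterized, and unmarked degree-two vertices have been suppressed. Hence $h$ induces an isomorphism of ribbon graphs $\Gamma(L)\to\Gamma(L')$ that preserves the cyclic orders at vertices and fixes $o$. We transport the chosen orientations of the edges of $\Gamma(L)$ to $\Gamma(L')$. This gives a bijection $E_L\to E_{L'}$, $e\mapsto h(e)$, and therefore a measure-preserving identification $G^{E_L}\cong G^{E_{L'}}$ of product Haar measures. A different choice of orientations on $E_{L'}$ only inverts some variables, which preserves Haar measure. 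The coefficient $\widehat W$ is therefore independent of this choice, as in Remark~\ref{rem:topological-coefficient-subdivision}.

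Second, we check that all words agree under this identification. Since $h\circ\ell_i=\ell'_i$ in $P(\Sigma)$ and both loops are edge paths in minimal graphs, the edge word of $\ell'_i$ is the image of the edge word of $\ell_i$. Thus $h_{\ell'_i}(U)=h_{\ell_i}(U)$ under the identification. For the cut components, $h$ restricts to a homeomorphism $\Sigma\setminus\Gamma(L)\to\Sigma\setminus\Gamma(L')$. It extends to the cut-open surfaces $C_j\to C'_j$, which preserves the genus and the Euler characteristic, so $\chi(C'_j)=\chi(C_j)$ and $d_{\Lambda'(C'_j)}^{\chi(C'_j)}=d_{\Lambda(C_j)}^{\chi(C_j)}$. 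The boundary cycles of $C_j$ are traced by following the ribbon structure, that is, the faces of the thickened graph. The orientation-preserving ribbon isomorphism therefore maps each boundary word $x_{j,q}$ to $x'_{j,q}$, possibly up to cyclic permutation. Components coming from $\partial\Sigma$ correspond to themselves, since $h$ fixes $\partial\Sigma$ pointwise, so the prescribed classes $[t_j]$ match. Characters are class functions, so cyclic rotation of a boundary word does not change $\chi_\lambda(x_{j,q}(U))$. Substituting all of this into the definition of $\widehat W_{F,L'}(\Lambda')$ gives exactly the integrand of $\widehat W_{F,L}(\Lambda)$ integrated against the same Haar measure, and the equality follows.

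The main obstacle is the careful justification that the boundary cycles of the cut surfaces are determined by the ribbon structure together with the orientation, and that they correspond under $h$ with matching orientations. I would argue this via the standard fact that a regular neighbourhood of an embedded graph in an oriented surface is determined, up to orientation-preserving homeomorphism, by its ribbon structure. Its boundary circles are the face cycles of the ribbon graph, and $C_j$ is obtained by attaching the closure of the complement along these circles. Isotopy relative to $o$ is not actually needed for this lemma; orientation preservation and $h(\Gamma(L))=\Gamma(L')$ suffice.
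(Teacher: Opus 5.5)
Your proposal is correct and takes essentially the same route as the paper. Like the paper, you transport the edge orientations by $h$ to identify the Haar product spaces, check that the loop words, the oriented boundary cycles of the cut components (with $\partial\Sigma$ fixed by $h$) and the Euler characteristics all correspond, and handle other orientation choices by inversion invariance of Haar measure; your extra remarks (vertices of minimal graphs are topologically characterized, characters are unaffected by cyclic rotation of boundary words, boundary cycles are the face cycles of the ribbon structure) fill in details the paper leaves implicit.
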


\begin{proof}
Orient the cut edges of $L$ and transport these orientations by $h$. The induced bijection of cut edges identifies the Haar product spaces. Because $h$ preserves the marked loop words, the words $h_{\ell_i}$ in the cut-edge variables are identical under this identification. It also identifies, with their orientations, every marked boundary cycle of every cut component; original boundary components are unchanged because $h$ fixes $\partial\Sigma$ pointwise. Finally, $\chi(C'_j)=\chi(C_j)$. Thus the two defining Haar integrals are identical after relabelling variables. If different auxiliary edge orientations are chosen, the same conclusion follows from invariance of Haar measure under inversion.
\end{proof}

\begin{lemma}\label{lem:topological-coefficient-polynomial-bound}
Fix a regular loop family $L$ and $F\in L^\infty(G^r)$. There exist constants $M=M(L)$ and $C=C(L,\|F\|_\infty)$ such that
\[
 |\widehat W_{F,L}(\Lambda)|
 \le
 C\prod_{j=1}^m d_{\Lambda(C_j)}^M
\]
for every labelling $\Lambda$.
\end{lemma}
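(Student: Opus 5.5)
The plan is to bound the integrand of $\widehat W_{F,L}(\Lambda)$ pointwise and then integrate against the Haar probability measure on $G^{E_L}$.

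Since $dU$ has total mass one,
\[
 |\widehat W_{F,L}(\Lambda)|
 \le
 \|F\|_\infty
 \prod_{j=1}^m d_{\Lambda(C_j)}^{\chi(C_j)}
 \sup_{U}\prod_{q=1}^{m_j}\bigl|\chi_{\Lambda(C_j)}(x_{j,q}(U))\bigr|.
\]
We then use the elementary bound $|\chi_\lambda(x)|\le d_\lambda$ for every $x\in G$. This holds because $\chi_\lambda(x)$ is the trace of a unitary matrix of size $d_\lambda$. It applies equally to cut-boundary words $x_{j,q}(U)$ and to original boundary holonomies, which are conjugates of $t_i$. Hence the $j$-th factor is at most $d_{\Lambda(C_j)}^{\chi(C_j)+m_j}$. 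Since $\chi(C_j)=2-2g_j-m_j$, this equals $d_{\Lambda(C_j)}^{2-2g_j}\le d_{\Lambda(C_j)}^2$, using $d_\lambda\ge1$ and $g_j\ge0$.

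We can therefore take $M=2$, or more crudely $M=\max_j\bigl(2-2g_j\bigr)_+$, and $C=\|F\|_\infty$. The only dependence on $L$ is through the finite data $(g_j,m_j)_{j\le m}$ of the cut components, all of which are determined by the marked ribbon type of $\Gamma(L)$. Measurability of the integrand is clear, since it is a product of a bounded measurable function composed with continuous word maps and continuous characters. For $F\in L^\infty$, the integral is understood in the same way, so the sup-norm bound suffices.

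There is no real obstacle. The one point needing care is the sign of the exponent $\chi(C_j)$: when it is negative, the factor $d_\lambda^{\chi(C_j)}$ is small rather than large, and it must be combined with the $m_j$ character factors before bounding. Otherwise one would get the weaker but still valid exponent $M=\max_j(\chi(C_j)_+ + m_j)$. Either choice gives a polynomial bound of the stated form.
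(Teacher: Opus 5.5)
Your proof is correct and is essentially the paper's argument: bound the integrand pointwise using $|\chi_\lambda|\le d_\lambda$ and the unit mass of Haar measure, then absorb the dimension powers into a common exponent. Your extra step pairs $d_{\Lambda(C_j)}^{\chi(C_j)}$ with the $m_j$ boundary characters to get $d_{\Lambda(C_j)}^{2-2g_j}\le d_{\Lambda(C_j)}^2$, which is the same identity the paper uses in the proof of Theorem~\ref{thm:Fourier_Wilsonloop}; this simply makes the paper's unspecified exponent explicit as $M=2$, with $C=\|F\|_\infty$.
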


\begin{proof}
In the definition of $\widehat W_{F,L}$, use $|\chi_\lambda(g)|\le d_\lambda$. There are finitely many boundary-character factors, and the powers $d_{\Lambda(C_j)}^{\chi(C_j)}$ are bounded above by fixed nonnegative powers of the corresponding dimensions. Absorbing all these powers into a common exponent $M$ gives the estimate.
\end{proof}

\begin{lemma}\label{lem:levy-symbol-summability}
Let $\psi:\widehat G\to\C$ be the symbol of an admissible L\'evy process. For every $s>0$ and $M\ge0$,
\begin{equation}\label{eq:bound-symbol-exp}
 \sum_{\lambda\in\widehat G}
 d_\lambda^M e^{-s\Re\psi(\lambda)}<\infty.
\end{equation}
\end{lemma}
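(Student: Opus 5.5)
The plan is to reduce~\eqref{eq:bound-symbol-exp} to a Plancherel identity for the L\'evy densities, using the semigroup structure exactly as in the Peter--Weyl regularity computation in the introduction. Since $X$ is admissible, for every $t>0$ the law of $X_t$ has a continuous central density $Q_t^X$. Its normalized Fourier coefficients are
\[
 a_t(\lambda)=e^{-t\psi(\lambda)},
 \qquad
 |a_t(\lambda)|=e^{-t\Re\psi(\lambda)} .
\]
Because $Q^X_t$ is continuous on the compact group $G$, it lies in $L^2(G)$. Its character expansion $Q_t^X=\sum_\lambda d_\lambda a_t(\lambda)\chi_\lambda$ is orthogonal in $L^2(G)^{\operatorname{Ad}G}$, so Plancherel gives
\[
 \sum_{\lambda\in\widehat G}d_\lambda^2 e^{-2t\Re\psi(\lambda)}=\|Q_t^X\|_{L^2(G)}^2<\infty .
\]
Choosing $t=s/2$ yields the claim for $M=2$, and hence for every $M\le 2$, since $d_\lambda\ge1$.

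For larger $M$ the idea is to trade half of the available decay for a polynomial weight. First I would split
\[
 e^{-s\Re\psi(\lambda)}=e^{-\frac s2\Re\psi(\lambda)}\,e^{-\frac s2\Re\psi(\lambda)} .
\]
The aim is then the intermediate bound
\[
 d_\lambda^{M-2}e^{-\frac s2\Re\psi(\lambda)}\le C_{s,M}\qquad\text{uniformly in }\lambda .
\]
Combined with the $M=2$ case applied at time $s/2$, this bound gives the result at once.

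I would obtain the intermediate bound from regularity of $Q_t^X$ in the convolution sense. Iterating the semigroup, $Q_{s/2}^X=(Q^X_{s/(2N)})^{*N}$, but this alone gives no smoothness. So I would instead use an $L^\infty$ bound:
\[
 \sup_\lambda d_\lambda^{2}\,e^{-u\Re\psi(\lambda)}\le \sum_\lambda d_\lambda^2 e^{-u\Re\psi(\lambda)}=\|Q^X_{u/2}\|_2^2<\infty ,
\]
which yields $d_\lambda^2\le C_u e^{u\Re\psi(\lambda)}$ for every $u>0$. Raising this to the power $k$ with $u=s/(2k)$ gives
\[
 d_\lambda^{2k}\le C^k e^{\frac s2\Re\psi(\lambda)} .
\]
Taking $2k\ge M-2$ then delivers the uniform estimate. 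The key point is that the $M=2$ Plancherel bound holds for \emph{every} positive time, so arbitrarily small time fractions of decay control arbitrary powers of $d_\lambda$.

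I expect the only delicate point to be justifying the Plancherel identity, namely that $Q_t^X\in L^2$ with exactly these coefficients. This is immediate from continuity of the density given by admissibility and from the definition of $\psi$ via $\frac1{d_\lambda}\langle Q_t^X,\chi_\lambda\rangle=e^{-t\psi(\lambda)}$. Everything else is the elementary bootstrap above.
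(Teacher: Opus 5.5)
Your proposal is correct and takes essentially the same route as the paper. Plancherel for $Q_t^X$ at every positive time gives the case $M\le2$, and the resulting uniform bound $\sup_\lambda d_\lambda^2e^{-u\Re\psi(\lambda)}<\infty$, raised to a power, absorbs $d_\lambda^{M-2}$ into half of the exponential decay; the only cosmetic difference is that you use an integer power $k$ with $2k\ge M-2$ together with $d_\lambda\ge1$, whereas the paper takes the real power $(M-2)/2$ with $r=s/(M-2)$.
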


\begin{proof}
Let $Q_t^\psi$ be the density of the process at time $t>0$. Since it is continuous, it belongs to $L^2(G)$. Its scalar Fourier coefficient at $\lambda$ is $e^{-t\psi(\lambda)}$, so Plancherel gives
\[
 \|Q_t^\psi\|_{L^2(G)}^2
 =
 \sum_{\lambda\in\widehat G}
 d_\lambda^2 e^{-2t\Re\psi(\lambda)}<\infty.
\]
Hence
\[
 \sum_{\lambda\in\widehat G}d_\lambda^2e^{-r\Re\psi(\lambda)}<\infty
\]
for every $r>0$. This immediately proves the claim when $M\leq2$.

Assume $M>2$ and put $r=s/(M-2)$. Convergence of the preceding series implies
\[
 C_r:=\sup_{\lambda\in\widehat G}
 d_\lambda^2e^{-r\Re\psi(\lambda)}<\infty.
\]
Therefore
\[
 d_\lambda^{M-2}e^{-\frac{s}{2}\Re\psi(\lambda)}
 =
 \left(d_\lambda^2e^{-r\Re\psi(\lambda)}\right)^{(M-2)/2}
 \leq C_r^{(M-2)/2}.
\]
Multiplying by $d_\lambda^2e^{-\frac{s}{2}\Re\psi(\lambda)}$ and summing over $\lambda$ gives
\[
 \sum_{\lambda\in\widehat G}d_\lambda^M e^{-s\Re\psi(\lambda)}
 \leq
 C_r^{(M-2)/2}
 \sum_{\lambda\in\widehat G}d_\lambda^2e^{-\frac{s}{2}\Re\psi(\lambda)}<\infty.
\]
\end{proof}
\begin{lemma}\label{lem:triangular-products}
Fix $\lambda\in\widehat G$. Suppose that $t_{n,k}>0$ satisfy
\[
 \max_k t_{n,k}\xrightarrow[n\to\infty]{}0,
 \qquad
 \sum_k t_{n,k}\xrightarrow[n\to\infty]{} T<\infty.
\]
Under assumption (A2),
\[
 \prod_k a_{t_{n,k}}(\lambda)
 \longrightarrow
 e^{-T\psi(\lambda)}.
\]
\end{lemma}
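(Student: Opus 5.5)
Fix $\lambda$ and write $a_t(\lambda)=e^{-t\psi(\lambda)}(1+r_t)$, where $r_t:=e^{t\psi(\lambda)}a_t(\lambda)-1$. Assumption (A2) says exactly that $e^{-t\psi(\lambda)}(1+r_t)=e^{-t\psi(\lambda)}+o(t)$. Since $e^{t\psi(\lambda)}$ is bounded near $t=0$, this gives $r_t=o(t)$ as $t\downarrow0$. The plan is to factor the product as
\[
 \prod_k a_{t_{n,k}}(\lambda)=e^{-\psi(\lambda)\sum_k t_{n,k}}\prod_k\bigl(1+r_{t_{n,k}}\bigr),
\]
where every factor is taken at the same $\lambda$. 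The first factor converges to $e^{-T\psi(\lambda)}$ by continuity of the exponential, because $\sum_k t_{n,k}\to T$. It therefore suffices to show that $\prod_k(1+r_{t_{n,k}})\to1$.

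For this, fix $\eta\in(0,1/2)$ and choose $t_1>0$ such that $|r_t|\le\eta t$ for all $0<t\le t_1$. Since $\max_k t_{n,k}\to0$, every $t_{n,k}$ lies below $t_1$ for $n$ large. Every factor then satisfies $|r_{t_{n,k}}|\le\eta t_{n,k}\le1/2$, so none of the factors $1+r_{t_{n,k}}$ vanishes. We then use the elementary inequality $\bigl|\prod_k(1+z_k)-1\bigr|\le\prod_k(1+|z_k|)-1\le e^{\sum_k|z_k|}-1$, valid for complex numbers $z_k$. It gives
\[
 \Bigl|\prod_k\bigl(1+r_{t_{n,k}}\bigr)-1\Bigr|\le \exp\Bigl(\eta\sum_k t_{n,k}\Bigr)-1\le e^{2\eta T}-1
\]
for $n$ large, because $\sum_k t_{n,k}\le 2T$ eventually. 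Letting $\eta\downarrow0$ yields the claim. If $T=0$, the same bound holds with any positive bound on $\sum_k t_{n,k}$ in place of $2T$.

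The proof has no serious obstacle. The one point that needs care is the use of a uniform $o(t)$ bound: the rate in (A2) depends on $\lambda$, but here $\lambda$ is fixed, so a single threshold $t_1$ serves for all $k$ and $n$ at once. The number of factors may grow with $n$, and this is compensated by the bound $\sum_k t_{n,k}\le 2T$, which controls the accumulated error $\sum_k|r_{t_{n,k}}|\le\eta\sum_k t_{n,k}$. Using the product inequality above avoids taking complex logarithms.
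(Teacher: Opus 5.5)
Your proposal is correct and follows essentially the same route as the paper: write $a_t(\lambda)=e^{-t\psi(\lambda)}(1+t\varepsilon_\lambda(t))$ with $\varepsilon_\lambda(t)\to0$, note that the accumulated error $\sum_k|t_{n,k}\varepsilon_\lambda(t_{n,k})|$ is bounded by the bounded total area times a vanishing supremum, and conclude with $\bigl|\prod_k(1+u_k)-1\bigr|\le e^{\sum_k|u_k|}-1$. The only difference is presentational: you use a fixed $\eta$ and then let $\eta\downarrow0$, where the paper uses $\sup_{0<t\le\max_k t_{n,k}}|\varepsilon_\lambda(t)|\to0$. The non-vanishing of the factors that you check is not actually needed.
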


\begin{proof}
For fixed $\lambda$, assumption (A2) can be written
\[
 a_t(\lambda)
 =
 e^{-t\psi(\lambda)}\bigl(1+t\varepsilon_\lambda(t)\bigr),
 \qquad
 \varepsilon_\lambda(t)\longrightarrow0.
\]
Put $u_{n,k}=t_{n,k}\varepsilon_\lambda(t_{n,k})$. Then
\[
 \sum_k|u_{n,k}|
 \le
 \left(\sum_k t_{n,k}\right)
 \sup_{0<t\le\max_k t_{n,k}}|\varepsilon_\lambda(t)|
 \longrightarrow0.
\]
Using
\[
 \left|\prod_k(1+u_{n,k})-1\right|
 \le
 \exp\left(\sum_k|u_{n,k}|\right)-1,
\]
we obtain $\prod_k(1+u_{n,k})\to1$. The exponential factors multiply to
\[
 \exp\left(-\psi(\lambda)\sum_k t_{n,k}\right)
 \longrightarrow e^{-T\psi(\lambda)}.
\]
\end{proof}

\begin{proof}[Proof of Theorem~\ref{thm:universality}]
Let $L=(\ell_1,\ldots,\ell_r)$ be regular and let $L_n$ be an arbitrary stable approximation carried by $(\Gbb_n)$. Let $\nu^\psi$ be the corresponding law of the holonomy vector of $H^\psi$. We first work with the unnormalized lattice measures; the argument below will show that their partition functions are strictly positive for all sufficiently large $n$. Since the normalized lattice laws and $\nu^\psi$ are invariant under diagonal conjugation, Lemma~\ref{lem:invariant-tests-determine-weak-convergence} shows that it suffices to test against
\[
 F\in C(G^r)^{\operatorname{Ad}G}.
\]

Let $C_1,\ldots,C_m$ be the cut components for $L$. For all
sufficiently large $n$, choose the homeomorphism $h_n$ in
Definition~\ref{def:Levy-stable-graph-approximation} and write
$C_{j,n}=h_n(C_j)$. Every $C_j$ is a nonempty open subset of $\Sigma$ and
hence has positive area. By stability,
\[
 |C_{j,n}|\longrightarrow|C_j|,
\]
and by Lemma~\ref{lem:mesh-implies-face-area},
\[
 \delta_n:=\max_{f\in F(\Gbb_n)}|f|\longrightarrow0.
\]

Choose $\eta_j>0$ such that $|C_{j,n}|\geq\eta_j$ for all sufficiently
large $n$. Let $M_0$ be supplied by
Lemma~\ref{lem:topological-coefficient-polynomial-bound} and put
$M=\max\{M_0,2\}$. For every $j$, apply (A3) with $T=\eta_j$ and this
value of $M$. Denote the resulting small-time threshold by $t_j$ and set
\[
 b_j(\lambda)
 :=\sup_{0<t\leq t_j}|a_t(\lambda)|^{\eta_j/t}.
\]
Then
\begin{equation}\label{eq:universality-single-plaquette-envelope}
 \sum_{\lambda\in\widehat G}
 d_\lambda^M b_j(\lambda)<\infty.
\end{equation}
Moreover, (A1) implies $0\leq b_j(\lambda)\leq1$. Since
$\delta_n\to0$, all face areas are at most $\min_jt_j$ for all
sufficiently large $n$. For such $n$,
\begin{align}
 \prod_{f\subset C_{j,n}}
 |a_{|f|}(\lambda)|
 &=\prod_{f\subset C_{j,n}}
 \left(
 |a_{|f|}(\lambda)|^{\eta_j/|f|}
 \right)^{|f|/\eta_j}\notag\\
 &\leq b_j(\lambda)^{|C_{j,n}|/\eta_j}
 \leq b_j(\lambda).
 \label{eq:component-product-envelope}
\end{align}
Writing $g_j$ for the genus of $C_j$, we have $2-2g_j\leq2\leq M$, and hence
\[
 \sum_{\lambda\in\widehat G}
 d_\lambda^{2-2g_j}
 \prod_{f\subset C_{j,n}}|a_{|f|}(\lambda)|
 \leq
 \sum_{\lambda\in\widehat G}d_\lambda^M b_j(\lambda)<\infty.
\]
Thus the componentwise hypothesis
\eqref{eq:component-spectral-summability} holds for all sufficiently
large $n$. Identify labels on the components through $h_n$. By
Remark~\ref{rem:topological-coefficient-subdivision}, the topological
coefficient may be computed using the minimal loop graph. Theorem~\ref{thm:Fourier_Wilsonloop} and
Lemma~\ref{lem:topological-coefficient-marked-invariance} therefore give
the non-normalized integral
\[
 \mathcal E_n(F)
 =
 \sum_{\Lambda:\{C_1,\ldots,C_m\}\to\widehat G}
 \left(
 \prod_{j=1}^m
 \prod_{f\subset C_{j,n}}
 a_{|f|}(\Lambda(C_j))
 \right)
 \widehat W_{F,L}(\Lambda).
\]
For a fixed label $\Lambda$, the face areas in $C_{j,n}$ have maximum at
most $\delta_n$ and sum to $|C_{j,n}|$. Since $|C_{j,n}|\to |C_j|$,
Lemma~\ref{lem:triangular-products} therefore gives
\[
 \prod_{f\subset C_{j,n}}
 a_{|f|}(\Lambda(C_j))
 \longrightarrow
 e^{-|C_j|\psi(\Lambda(C_j))}.
\]

For domination, combine
Lemma~\ref{lem:topological-coefficient-polynomial-bound} with
\eqref{eq:component-product-envelope}. Since $M\geq M_0$,
\[
\left|
 \left(
 \prod_{j=1}^m\prod_{f\subset C_{j,n}}
 a_{|f|}(\Lambda(C_j))
 \right)
 \widehat W_{F,L}(\Lambda)
\right|
\leq
 C\prod_{j=1}^m
 d_{\Lambda(C_j)}^M b_j(\Lambda(C_j)).
\]
The right-hand side is summable over all labels by
\eqref{eq:universality-single-plaquette-envelope}. Dominated convergence
yields
\[
 \mathcal E_n(F)
 \longrightarrow
 \mathcal E^\psi(F)
 :=
 \sum_\Lambda
 \left(
 \prod_{j=1}^m
 e^{-|C_j|\psi(\Lambda(C_j))}
 \right)
 \widehat W_{F,L}(\Lambda).
\]
Applying the same argument to $F=1$ gives
\[
 Z_n=\mathcal E_n(1)
 \longrightarrow
 Z^\psi=\mathcal E^\psi(1).
\]
By admissibility, every L\'evy density $Q_t^\psi$ is strictly positive. In the rigorous boundary-condition formula~\eqref{eq:DS-rigorous}, the limiting partition function $Z^\psi$ is therefore the integral of a strictly positive continuous function against a product of probability measures; hence $Z^\psi>0$, both on closed surfaces and in the presence of prescribed boundary conjugacy classes. It follows that $Z_n>0$ for all sufficiently large $n$. For those $n$, let $\nu_n$ be the law of the holonomy vector under the normalized lattice gauge measure. Then
\[
 \int F\,d\nu_n
 =
 \frac{\mathcal E_n(F)}{Z_n}
 \longrightarrow
 \frac{\mathcal E^\psi(F)}{Z^\psi}.
\]

For the admissible L\'evy semigroup with symbol $\psi$, the Fourier coefficient of its time-$t$ density is exactly $e^{-t\psi(\lambda)}$. Lemma~\ref{lem:levy-symbol-summability} gives the graph-level product summability needed to apply the state-sum formula to these densities. The graph measures are precisely those of \cite[Definition~4.3.2]{Lev10}; Proposition~4.3.10 there proves that they form a discrete Markovian holonomy field. Propositions~4.3.11 and~4.3.15 prove its regularity, and Theorem~3.2.9 extends it uniquely to a regular Markovian holonomy field. This construction is summarized in \cite[Theorem~4.3.1]{Lev10}; its restriction to the based loops of the fixed surface is precisely the holonomy process $H^\psi$ used here. Applying the same state-sum formula to these graph measures therefore identifies
\[
 \frac{\mathcal E^\psi(F)}{Z^\psi}
 =
 \mathbb E\left[
 F(H^\psi_{\ell_1},\ldots,H^\psi_{\ell_r})
 \right].
\]
Thus $\nu_n\Rightarrow\nu^\psi$. Since the stable approximation was arbitrary, the lattice holonomy processes converge regularly to $H^\psi$. Lemma~\ref{lem:unique-d1-extension} shows that this regular limit determines the finite-dimensional distributions on all rectifiable based loops.
\end{proof}

\section{Existence of universally stable graph sequences}\label{sec:proof_prop_univ_stable}

This section is devoted to the proof of Proposition~\ref{prop:existence-universally-stable-sequence}. It goes as follows: first, by a separability argument, we choose a countable collection of regular loop families which is dense, in the $d_1$-topology, among all regular loop families and which approximates them without changing their embedded topology. Second, we successively insert these countably many families into a growing sequence of finite embedded graphs. A small general-position perturbation is used before each insertion to ensure that the new family meets the graph already constructed only at finitely many points. Third, each resulting finite graph is completed to an arbitrarily fine triangulation of the surface. Enumerating every element of the countable dense collection infinitely often then gives a diagonal argument: every prescribed regular loop family is approximated, with arbitrarily small error, at arbitrarily late stages of the same graph sequence.

A \emph{marked abstract loop type} consists of a finite connected abstract ribbon graph with a distinguished vertex, together with an ordered finite family of closed oriented edge-words based at that vertex. An embedding of such a type in $\operatorname{int}(\Sigma)$, sending the distinguished vertex to $o$, determines a regular based loop family.

\begin{lemma}\label{lem:ribbon-neighbourhood-isotopy}
Let $L$ be a finite regular based loop family, and let $R\Subset\operatorname{int}(\Sigma)$ be a closed ribbon neighbourhood of $\Gamma(L)$, decomposed into pairwise disjoint vertex disks and edge strips. Let $L'$ be a regular loop family of the same marked abstract loop type such that:
\begin{enumerate}
\item the vertex of $L'$ corresponding to a vertex $v$ of $\Gamma(L)$ lies in the corresponding vertex disk;
\item in every edge strip, the corresponding edge of $L'$ is a single properly embedded arc joining the two prescribed attaching intervals;
\item no edge of $L'$ enters a vertex disk or edge strip to which it is not incident, and the cyclic order at every vertex is the same as for $L$.
\end{enumerate}
Then $L$ and $L'$ have the same marked ambient ribbon type. More precisely, there is an ambient isotopy of $\Sigma$, fixing $o$ and $\partial\Sigma$ pointwise and supported in $\operatorname{int}(R)$, whose final homeomorphism sends every marked loop of $L$ to the corresponding marked loop of $L'$.
\end{lemma}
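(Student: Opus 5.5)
We build the isotopy in two stages: first on the vertex disks, then on the edge strips. Both stages are supported in $\operatorname{int}(R)$. Since $R\Subset\operatorname{int}(\Sigma)$, every isotopy we build fixes $\partial\Sigma$ automatically. To keep $o$ fixed, we use hypothesis 1 and the fact that the distinguished vertex of $L'$ is $o$ itself (both families are based at $o$), so nothing moves the base vertex.

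\textbf{Stage 1: vertex disks.} Let $D_v$ be a vertex disk, with $v'\in D_v$ the corresponding vertex of $L'$. Inside $D_v$, the graph $\Gamma(L)$ is a cone over finitely many points on $\partial D_v$, one for each half-edge: a star of properly embedded arcs from $v$ to the attaching intervals. We may take these points to lie in the attaching intervals. Hypotheses 2 and 3 say that $\Gamma(L')\cap D_v$ is also a star, centred at $v'$, with one arc to each attaching interval, in the same cyclic order. There is one point to check here: hypothesis 3 forbids foreign edges, but an incident edge of $L'$ might a priori wander in and out of $D_v$. I would treat the conditions as meaning that each edge of $L'$ meets the region $R$ exactly in its vertex disks and its own strip, each in a single arc. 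If necessary, I would first apply a preliminary isotopy pushing extra excursions out along the strips.

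\begin{itemize}
\item First move $v'$ to $v$ by a disk isotopy fixed near $\partial D_v$. When $v=o$, no move is needed.
\item Then use the Alexander trick / Schoenflies theorem on each sector between consecutive spokes. Two stars in a disk with the same centre and endpoints in the same cyclic order are ambient isotopic rel $\partial D_v$, sector by sector. Each sector is a disk, by the Jordan--Schoenflies theorem.
\item A further isotopy supported near $\partial D_v$, sliding along the attaching intervals, makes the endpoints on $\partial D_v$ coincide.
\end{itemize}

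\textbf{Stage 2: edge strips.} After Stage 1, in each strip $S_e\cong[0,1]\times[0,1]$ both edges are properly embedded arcs joining the two ends $\{0\}\times[0,1]$ and $\{1\}\times[0,1]$, with common endpoints. Such arcs split the square into two disks. By Schoenflies, applied to each half together with the Alexander trick, there is an isotopy of $S_e$ fixed on $\partial S_e$ carrying one arc to the other. Since the strips and disks are pairwise disjoint, gluing the finitely many isotopies gives an ambient isotopy of $\Sigma$ supported in $\operatorname{int}(R)$. It fixes $o$ and $\partial\Sigma$, and its final map $h$ satisfies $h(\Gamma(L))=\Gamma(L')$, sending each edge to the corresponding edge.

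\textbf{Loops as paths.} Since $L$ and $L'$ have the same marked abstract type, each $\ell_i$ and $\ell'_i$ is given by the same edge-word. Hence $h\circ\ell_i$ traverses the edges of $\ell'_i$ in the same order and with the same orientations. It is therefore equal to $\ell'_i$ in $P(\Sigma)$, because reparametrisation is allowed there. This is exactly Definition~\ref{def:same-ribbon-type}, with $h$ isotopic to the identity relative to $\{o\}\cup\partial\Sigma$.

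\textbf{Main obstacle.} The hard part is Stage 1. It requires making rigorous that hypothesis 3 yields a clean star picture in each vertex disk, and doing the careful Schoenflies sector argument while keeping the centre fixed at $o$. The edge-strip stage is routine.
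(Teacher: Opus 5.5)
Your argument is correct in outline and has the same architecture as the paper's proof: vertex disks first, then edge strips, with isotopies rel boundary concatenated, and the loops read off through their edge words. The key tool is different, though.

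The paper straightens the piecewise-smooth arms to PL arcs and applies the relative disk theorem for pairs \cite[Theorem~4.20]{RourkeSandersonPL}, in its $(2,1)$ case. It does this once in a narrow sector-disk around each arm and once in each strip. You instead build a homeomorphism of $\partial D_v$ together with the star of $\Gamma(L)$ in $D_v$: the identity on $\partial D_v$, fixing the centre, and sending each arm to the corresponding arm. You extend it over the complementary sectors by Jordan--Schoenflies and deform it to the identity by the Alexander trick, and you do the same in each strip. (Read ``sector by sector'' this way: the sectors of the two stars are different, so there is nothing to isotope ``inside'' a common sector.)

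What each route buys:
\begin{itemize}
\item Your route is purely two-dimensional and topological. It needs no tameness or PL approximation.
\item Your sectors between consecutive spokes work for any two stars with the same endpoints and cyclic order. The paper's narrow sector-disks around the arms, pairwise disjoint away from the centre, tacitly require each target arm to lie close to its original. That holds in the applications, where $L'$ is $C^1$-close to $L$, but it is not among the stated hypotheses.
\item The paper's route stays inside the PL framework that is used again in Lemma~\ref{lem:relative-general-position-graphs}.
\end{itemize}

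You need one fix. An isotopy rel $\partial D_v$ cannot move points of $\partial D_v$, so your sector step is false as stated when the endpoints agree only in cyclic order. Do the sliding along the attaching intervals first, as the paper does, letting it extend into the adjacent strips. Only then apply the sector construction, to stars with identical endpoints.

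Both worries in your last paragraph go away.
\begin{itemize}
\item The Alexander trick coned at the centre, $\Phi_t(x)=t\,\Phi(x/t)$ for $|x|\le t$, satisfies $\Phi_t(0)=t\,\Phi(0)=0$ for all $t$. So $o$ stays fixed at no extra cost.
\item No preliminary isotopy is needed to get clean stars. The conclusion forces $\Gamma(L')=h(\Gamma(L))\subset\operatorname{int}(R)$, so this inclusion is implicitly a hypothesis. Given it, an edge $e'$ of $L'$ joining $v'$ to $w'$ lies in $D_v\cup D_w\cup S_e$ by hypothesis~3, and meets $S_e$ in a single properly embedded arc $\alpha$ by hypothesis~2. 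Each component of $e'\setminus\alpha$ is then a connected subset of $D_v\sqcup D_w$ containing $v'$ or $w'$. Hence it lies in one vertex disk, and meets that disk's boundary only at the corresponding endpoint of $\alpha$.
\end{itemize}
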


\begin{proof}
Informally, the vertex disks allow the vertices and incident arms to move without crossing, while each edge strip is a rectangle in which there is no topological obstruction to deforming one properly embedded arc into another with the same endpoints.
The assertion reduces to the relative unknotting of finitely many tame arcs in disks. Piecewise-smooth arcs are tame, and may be straightened to PL arcs inside arbitrarily small tubular neighbourhoods. We spell out the reduction and use the relative disk theorem for pairs \cite[Theorem~4.20]{RourkeSandersonPL} in the case of the standard pair $(I^2,I^1)$.

First work in a vertex disk. Shrink it slightly so that the portions of both graphs in the disk are finite embedded stars, meeting the new boundary circle once for every incident half-edge. Since the two cyclic orders agree, an orientation-preserving isotopy of the boundary circle, supported in the attaching intervals, matches the corresponding intersection points. Extend this isotopy to a collar of the boundary and, when the disk corresponds to the base point, keep $o$ fixed. After moving the central vertex along a short embedded arc, choose pairwise disjoint narrow sector-disks around the incident arms, disjoint away from a small disk about the central vertex. In each sector-disk the two corresponding arms are tame properly embedded arcs with the same endpoints. The $(2,1)$ case of \cite[Theorem~4.20]{RourkeSandersonPL}, applied relative to the boundary of the sector, supplies an ambient isotopy carrying one arm to the other. Performing these finitely many isotopies successively carries the original star to the target star, fixes the outer boundary of the vertex disk, and may be arranged to make the two graphs agree near every attaching interval.

Now consider an edge strip. After the preceding vertex-disk isotopies, the original and target edges agree near the two attaching intervals. They are tame properly embedded arcs in a rectangle with the same endpoints. Applying again the $(2,1)$ case of \cite[Theorem~4.20]{RourkeSandersonPL}, relative to the boundary of the rectangle, gives an isotopy of the strip carrying the original core to the target arc and fixed near the whole boundary of the strip.

The vertex-disk and edge-strip isotopies have disjoint supports except on regions where they are already stationary, so they concatenate to an isotopy of $R$ relative to $\partial R$ and to $o$. Extending it by the identity on $\Sigma\setminus R$ proves the result. The construction respects the marked edge words, hence sends every marked loop of $L$ to the corresponding marked loop of $L'$.
\end{proof}

\begin{lemma}\label{lem:countable-dense-regular-families}
There exists a countable collection $\mathscr D$ of finite regular based loop families with the following property. For every finite regular based loop family $L=(\ell_1,\ldots,\ell_r)$ and every $\varepsilon>0$, there exists $L'=(\ell'_1,\ldots,\ell'_r)\in\mathscr D$ such that:
\begin{enumerate}
\item $L'$ has the same marked ambient ribbon type as $L$;
\item $\max_i d_1(\ell'_i,\ell_i)<\varepsilon;$
\item the marked ambient equivalence can be realized by an ambient isotopy fixing $o$ and $\partial\Sigma$ pointwise and supported in an open set of area smaller than $\varepsilon$.
\end{enumerate}
Hence, corresponding complementary components have areas differing by less than $\varepsilon$.
\end{lemma}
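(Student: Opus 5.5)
The plan is a separability argument over marked abstract loop types, combined with Lemma~\ref{lem:ribbon-neighbourhood-isotopy}.

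\textbf{Step 1: reduce to countably many shapes.} There are only countably many marked abstract loop types. For each type, fix a countable dense set of points $\Sigma_0\subset\operatorname{int}(\Sigma)$ containing $o$. Let $\mathscr D$ consist of all regular embeddings of all types whose edges are polygonal arcs: concatenations of finitely many short geodesic segments with breakpoints in $\Sigma_0$. To handle the prescribed tangent germs at vertices, which are needed for $d_1$ control, I would also allow each edge to begin and end with a short segment whose direction lies in a countable dense set of directions. The resulting set is countable, since it is a countable union over types and finite tuples of points and directions.

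\textbf{Step 2: approximate a given regular family.} Fix a regular $L$ and $\varepsilon>0$. Choose a closed ribbon neighbourhood $R\Subset\operatorname{int}(\Sigma)$ of $\Gamma(L)$ with $|R|<\varepsilon$. This is possible because $\Gamma(L)$ is a finite union of piecewise-smooth arcs and so has zero area; thin tubular neighbourhoods have vanishing area. Shrink $R$ further so that its vertex disks and edge strips are as thin as needed. Perturb each vertex of $\Gamma(L)$ other than $o$ to a nearby point of $\Sigma_0$ inside its vertex disk. Replace each edge by a piecewise geodesic arc through points of $\Sigma_0$ that stays inside its own strip and the two incident vertex disks. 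Near vertices, choose rational directions close to the original tangents so that the cyclic order is preserved and arcs at a common vertex stay disjoint.

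By the density of piecewise geodesic paths (Theorem~\ref{thm:Levy-d1-complete-rewrite}), and because sufficiently fine polygonal approximations of a piecewise-smooth arc converge in $d_1$, the new loops $\ell'_i$ satisfy $\max_i d_1(\ell'_i,\ell_i)<\varepsilon$. This uses that $d_1$ controls both uniform distance and the $L^1$ distance of tangents. A small general-position adjustment makes the new arcs embedded with disjoint interiors; this is easy because distinct strips are disjoint and the arcs inside a strip form a single chain. The resulting $L'$ lies in $\mathscr D$ and satisfies the hypotheses of Lemma~\ref{lem:ribbon-neighbourhood-isotopy}.

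\textbf{Step 3: conclude.} Lemma~\ref{lem:ribbon-neighbourhood-isotopy} gives an ambient isotopy fixing $o$ and $\partial\Sigma$ and supported in $\operatorname{int}(R)$, which has area less than $\varepsilon$. This yields items (1) and (3). Since the final homeomorphism $h$ is the identity outside $R$, the symmetric difference of $C_j$ and $h(C_j)$ is contained in $R$. Hence the areas of corresponding components differ by less than $\varepsilon$.

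\textbf{Main obstacle.} The delicate step is Step 2 near the vertices. The polygonal replacement must simultaneously remain inside the prescribed disks and strips, stay embedded and pairwise disjoint, preserve the cyclic orders, and be $C^1$-close enough for $d_1$. At a vertex with tangential contact between arms this could fail for crude approximations. I would handle it by first straightening each arm near the vertex to its tangent direction, which costs little in $d_1$, and then picking rational directions in disjoint small sectors.
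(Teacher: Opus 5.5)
Your argument is correct in outline and has the same skeleton as the paper's proof: countably many marked types, a thin ribbon neighbourhood $R$ with $|R|<\varepsilon$, Lemma~\ref{lem:ribbon-neighbourhood-isotopy}, and the bound $C_j\triangle h(C_j)\subset R$. Where you differ is in how the countable family and the approximant are produced.

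The paper never constructs approximants. For each type $\tau$ it uses that the space $\operatorname{Emb}_o^1(K_\tau,\operatorname{int}(\Sigma))$ of edgewise $C^1$ embeddings with the prescribed cyclic orders is second countable, and takes a countable dense subset. It first rounds the corners of $L$ so that $L$ lies in this space; this is an extra isotopy, hence the $\varepsilon/3$ bookkeeping. It then picks a $C^1$-close element of the dense subset. Embeddedness and cyclic orders are built into the definition of that space, so $C^1$-closeness alone gives ribbon position and $d_1$-closeness. The local vertex analysis you flag as your main obstacle therefore never arises.

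Your explicit piecewise-geodesic family avoids corner rounding and gives concrete representatives, which fit well with the PL approximations used later. The cost is that the local analysis must actually be carried out, and two points need fixing:
\begin{enumerate}
\item The initial and terminal segments need lengths drawn from a countable set, not only rational directions; otherwise $\mathscr D$ is not countable.
\item Tangential contact occurs not only at vertices. It also occurs at interior corners of an edge whose two smooth pieces leave the corner with a common tangent, i.e.\ a cusp, possibly with infinite-order contact. There a naive fine polygonal approximation need not be embedded, so your remark that the arcs in a strip form a single chain does not settle embeddedness. These corners need the same radial-segment device you use at vertices.
\end{enumerate}

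Finally, replacing an initial arc of length $s$ by a short geodesic segment costs $O(s)$ in $d_1$ whatever its direction. So the choice of directions matters only for embeddedness and cyclic order, not for the $d_1$ estimate.
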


\begin{proof}
There are only countably many marked abstract loop types. Fix one, denoted by $\tau$, and let $K_\tau$ be its underlying finite graph. Consider the space $\operatorname{Emb}_o^1(K_\tau,\operatorname{int}(\Sigma))$ of edgewise $C^1$ embeddings which send the distinguished vertex to $o$ and induce the prescribed cyclic orders, equipped with the edgewise $C^1$ topology. This space is second countable and therefore separable: after embedding $\Sigma$ smoothly in a Euclidean space and parametrizing the finitely many abstract edges by compact intervals, it is a subspace of a finite product of separable $C^1$ function spaces. Choose a countable dense subset $\mathscr E_\tau$, and let $\mathscr D$ be the union of the associated loop families over all marked types $\tau$.

Fix $L$ and $\varepsilon>0$. By rounding its finitely many edge corners inside pairwise disjoint coordinate disks of total area less than $\varepsilon/3$, replace $L$ by a family $L^{\mathrm{sm}}$ of the same marked ambient ribbon type whose edges are $C^1$ and such that
\[
 \max_i d_1(\ell_i^{\mathrm{sm}},\ell_i)<\frac{\varepsilon}{3}.
\]
The rounding is realized by an ambient isotopy fixing $o$ and $\partial\Sigma$ pointwise and supported in those disks. Choose a closed ribbon neighbourhood $R\Subset\operatorname{int}(\Sigma)$ of $\Gamma(L^{\mathrm{sm}})$, made of vertex disks and edge strips, with $|R|<\varepsilon/3$. By density, choose an embedding $\iota'\in\mathscr E_\tau$ sufficiently close that its vertices and edges lie in the prescribed disks and strips with the same incidences and cyclic orders, and such that the associated loop family $L'$ satisfies
\[
 \max_i d_1(\ell_i',\ell_i^{\mathrm{sm}})<\frac{\varepsilon}{3}.
\]
The triangle inequality gives the required $d_1$ estimate. Composing the corner-rounding isotopy with the isotopy furnished by Lemma~\ref{lem:ribbon-neighbourhood-isotopy} gives the marked ambient equivalence between $L$ and $L'$. Its support is contained in an open set of area less than $2\varepsilon/3<\varepsilon$. If $C$ is a complementary component of $\Gamma(L)$, then $C\triangle h(C)$ is contained in the support of the resulting homeomorphism $h$; hence corresponding complementary components have areas differing by less than $\varepsilon$.
\end{proof}

\begin{lemma}\label{lem:fine-relative-triangulation}
Let $K$ be a finite piecewise-smooth graph embedded in $\Sigma$, containing $o$ and carrying $\partial\Sigma$. For every $\varepsilon>0$, there exists a smooth triangulation $\mathcal T$ of $\Sigma$ such that:
\begin{enumerate}
\item $K$ and $\partial\Sigma$ are subcomplexes of $\mathcal T$;
\item every edge of $\mathcal T$ has length at most $\varepsilon$;
\item every two-simplex of $\mathcal T$ has diameter at most $\varepsilon$.
\end{enumerate}
\end{lemma}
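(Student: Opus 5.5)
The plan is to build the triangulation in three stages: make $K$ part of a smooth cell structure, triangulate the cells relative to their boundaries, and then subdivide until every simplex is small.

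First, I will produce a finite smooth cell decomposition of $\Sigma$ that contains $K\cup\partial\Sigma$ in its one-skeleton. The graph $K$ carries $\partial\Sigma$ and contains $o$, but its complementary regions need not be disks. So I cut $\Sigma$ along $K$ and obtain finitely many compact surfaces with piecewise-smooth boundary. In each such piece I add finitely many smooth properly embedded arcs, meeting $\partial$ transversally at new vertices, until every complementary region is a closed disk. This is the standard system of cutting arcs, using $2g+m-1$ arcs for a piece of genus $g$ with $m$ boundary circles. Pieces that carry no vertex on a boundary circle get one extra subdivision vertex there. After this step we have a topological map $\Gbb_0\supset K$ with piecewise-smooth edges and disk faces.

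Second, I will triangulate each disk face relative to its boundary polygon. The vertices of $\Gbb_0$, together with the corner points of the piecewise-smooth edges (finitely many, added as vertices), subdivide each face boundary into smooth arcs. Each closed face is then a smooth polygon, diffeomorphic to a convex Euclidean polygon by a map that is smooth up to the boundary away from the corners; the corners can be handled by a local smoothing in conformal charts or by the Schoenflies theorem with smooth corners. Coning from an interior point gives a smooth triangulation of the face. Taking the union over all faces gives a smooth triangulation $\mathcal T_0$ in which $K$ and $\partial\Sigma$ are subcomplexes.

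Third, I will subdivide barycentrically, performing this subdivision in the chosen smooth charts of the 2-simplices so that subcomplexes remain subcomplexes. Iterated barycentric subdivision of a Euclidean simplex has diameter tending to zero geometrically. Each simplex chart is a diffeomorphism of a compact set, hence bi-Lipschitz, and there are finitely many charts, so edge lengths and the diameters of 2-simplices on $\Sigma$ also tend to zero uniformly. Subdividing edges of $K$ replaces them by chains of smaller edges, so $K$ is still a subcomplex in the sense of being a union of closed cells. After enough subdivisions every edge has length at most $\varepsilon$ and every 2-simplex has diameter at most $\varepsilon$.

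The main obstacle I expect is the smooth (not merely topological) triangulation of the faces near the corner points of the piecewise-smooth graph, where the parametrizations must stay compatible with $K$. I would first invoke the classical existence of smooth triangulations of manifolds with a given compact smooth subcomplex (Whitehead, Munkres), applied after straightening corners locally. Failing that, I would use the PL route: by Lemma~\ref{lem:ribbon-neighbourhood-isotopy}-type tame-arc arguments, isotope $K$ to a piecewise-geodesic graph, then use L\'evy's Propositions~1.4.8 and~1.4.10 to obtain fine piecewise-geodesic triangulations carrying it.
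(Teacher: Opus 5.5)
\textbf{There is a genuine gap in your second stage.} Your first and third stages are sound. The third is exactly the paper's final step: simplex maps are Lipschitz on compact Euclidean simplices, and iterated barycentric subdivision shrinks the Euclidean mesh uniformly. The second stage, however, carries all of the difficulty, and you only assert it.

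A face of $\Gbb_0$ is in general not diffeomorphic, smoothly up to its corners, to a convex Euclidean polygon:
\begin{enumerate}
\item at every genuine corner of an edge of $K$, unless you add further arcs for this purpose, one of the two adjacent faces has interior angle greater than $\pi$;
\item a face sees the angle $2\pi$ at a degree-one vertex of $K$;
\item the closure of a face need not even be an embedded disk, since a cutting arc or a bridge may border the same face on both sides.
\end{enumerate}
A map whose differential at a polygon vertex is invertible sends the convex tangent sector there onto a convex sector, so it cannot produce a reflex corner. If instead you let the parametrization be non-smooth at the corners, the coned triangles are not smooth simplices, and you lose the Lipschitz bound your third stage needs for edge lengths.

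The remedies you list do not close this.
\begin{enumerate}
\item Smoothing corners ``in conformal charts'' moves $K$, which must be a subcomplex exactly.
\item A Schoenflies theorem for disks with corners only handles convex corners.
\item The relative triangulation theorem, in the form the paper uses \cite[Theorem~10.6]{Mun66}, extends a triangulation prescribed on a smooth boundary. Bringing a graph with corners and branch points into that form is precisely the missing step, and you do not carry out the local straightening you mention.
\item Isotoping $K$ to a piecewise-geodesic graph and invoking \cite[Propositions~1.4.8 and~1.4.10]{Lev10} gives a triangulation carrying a \emph{different} graph. It can only be transported back by a homeomorphism, with no control of smoothness or lengths.
\end{enumerate}
There is also a smaller issue: coning a monogon, a bigon, or a face meeting an edge twice yields a $\Delta$-complex rather than a simplicial complex. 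Subdivision repairs this, but $\mathcal T_0$ is not yet a triangulation.

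\textbf{The missing idea, which is how the paper argues,} is to confine all non-smooth features of $K$ to a region triangulated by hand, and to apply the general theorem only to a surface with smooth boundary.
\begin{enumerate}
\item The paper takes a closed regular neighbourhood $N$ of $K$ relative to $\partial\Sigma$, with rounded corners. It consists of vertex disks, strips around interior edges, and collar rectangles along $\partial\Sigma$.
\item Strips and rectangles are triangulated through product parametrizations with the core as a subcomplex. Vertex disks are triangulated relative to their incident arcs. Common subdivisions are chosen on the attaching intervals, so that $K$, $\partial\Sigma$ and $\operatorname{Fr}_\Sigma(N)$ are all subcomplexes.
\item Then $\overline{\Sigma\setminus N}$ is a compact smooth surface with smooth boundary, and \cite[Theorem~10.6]{Mun66} extends the boundary triangulation over it.
\end{enumerate}
Corners of $K$ are thus handled only inside single coordinate disks, where reflex sectors can be split by auxiliary arcs. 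Near the vertex, a sector between two transverse arcs $\gamma_1,\gamma_2$ issuing from $v$ is parametrized in a chart by $(s,t)\mapsto\gamma_1(s)+\gamma_2(t)-v$, which has invertible differential at the origin. Your face-by-face scheme would need exactly this vertex-disk step before coning to become a proof.
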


\begin{proof}
Subdivide $K$ at every corner and at the point $o$. Choose a smooth closed regular neighborhood $N$ of $K$, relative to $\partial\Sigma$, with rounded corners, consisting of vertex disks, strips around interior edges, and collar rectangles around the edges contained in $\partial\Sigma$. We take $N$ to contain a collar of the whole boundary. Triangulate each interior strip through a product parametrization $[0,1]\times[-1,1]$, with its core and boundary sides as subcomplexes; triangulate each boundary collar rectangle through $[0,1]\times[0,1]$ in the same way; and triangulate every vertex disk relative to the incident core arcs and boundary arcs. Choose a common finite subdivision on every attaching interval before triangulating the pieces. The resulting triangulations therefore glue to a finite smooth triangulation of $N$ for which $K$, $\partial\Sigma$ and the interior frontier $F:=\operatorname{Fr}_\Sigma(N)$ are subcomplexes.

The closure $M=\overline{\Sigma\setminus N}$ is a compact smooth surface with boundary $F$. By the relative smooth triangulation theorem \cite[Theorem~10.6]{Mun66}, the prescribed triangulation of $F$ extends to a smooth triangulation of $M$. Gluing gives a smooth triangulation of $\Sigma$ containing $K$ and $\partial\Sigma$ as subcomplexes.

Each simplex map in this finite triangulation is Lipschitz on its compact Euclidean simplex. The Euclidean mesh of iterated barycentric subdivisions tends uniformly to zero; hence, after sufficiently many subdivisions, all Riemannian edge lengths and all Riemannian diameters of two-simplices are at most $\varepsilon$. The prescribed subcomplexes remain subcomplexes.
\end{proof}

The next lemma is the general-position step needed to enlarge the graph inductively. Its geometric content is simple in dimension two. Two one-dimensional objects in a two-dimensional surface are generically expected to meet at isolated points rather than along common arcs. The PL general-position theorem allows us to realize such a perturbation while keeping the base point fixed, making the perturbation arbitrarily small, and localizing it in a prescribed neighbourhood of the new loop family. Since the graphs involved are finite polyhedra, a compact zero-dimensional intersection is then a finite set.

\begin{lemma}\label{lem:relative-general-position-graphs}
Let $K$ be a finite piecewise-smooth graph embedded in $\Sigma$, containing $o$ and carrying $\partial\Sigma$, and let $L$ be a finite regular based loop family. For every $\varepsilon>0$, there exists a regular loop family $\widetilde L$ such that:
\begin{enumerate}
\item $\widetilde L$ has the same marked ambient ribbon type as $L$;
\item we have
\[
 \max_i d_1(\widetilde\ell_i,\ell_i)<\varepsilon;
\]
\item $\Gamma(\widetilde L)$ and $K$ have no common nontrivial arc and have finite intersection;
\item the marked ambient equivalence is realized by an ambient isotopy fixing $o$ and $\partial\Sigma$ pointwise and supported in an open set of area smaller than $\varepsilon$.
\end{enumerate}
Consequently, after subdivision at the intersection points and edge corners, $K\cup\Gamma(\widetilde L)$ is a finite embedded graph.
\end{lemma}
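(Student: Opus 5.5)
The plan is to make the perturbation edge by edge inside a thin ribbon neighbourhood of $\Gamma(L)$, and then use Lemma~\ref{lem:ribbon-neighbourhood-isotopy} to certify that the marked ambient ribbon type is unchanged. First choose a closed ribbon neighbourhood $R\Subset\operatorname{int}(\Sigma)$ of $\Gamma(L)$, made of vertex disks and edge strips, with $|R|<\varepsilon$. Make the vertex disks small enough that the disk around $o$ meets $K$ only in a finite star of arcs issuing from $o$; this is possible after subdividing $K$ at corners, because $K$ is piecewise smooth. All modifications will be supported in $\operatorname{int}(R)$, which is disjoint from $\partial\Sigma$, so condition 4 will be automatic once an isotopy is produced.

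\emph{Step 1: vertices.} Move every vertex of $\Gamma(L)$ other than $o$, inside its vertex disk, to a point off $K$. This is possible because $K$ is a finite graph and so has empty interior. At $o$, rotate the germs of the arms of $L$ slightly, keeping their cyclic order, so that each arm leaves $o$ transversally to every germ of $K$ at $o$. There are finitely many germs, and both families are piecewise smooth, so this is a finite avoidance condition. These adjustments are realized by small isotopies supported in the vertex disks.

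\emph{Step 2: edges.} Straighten every edge of $\Gamma(L)$ and every edge of $K\cap R$ to PL arcs inside arbitrarily thin tubular neighbourhoods, as in the proof of Lemma~\ref{lem:ribbon-neighbourhood-isotopy}. Then apply relative PL general position in dimension two, keeping the endpoints fixed. Two one-dimensional polyhedra in a surface can be perturbed so that they meet transversally at finitely many points away from vertices. Each edge is perturbed only inside its own edge strip, and by an arbitrarily small amount in the $C^1$ topology. Alternatively, one can perturb in the smooth category on each smooth piece: transversality to the finitely many smooth arcs of $K$ is an open dense condition, and the intersection set then consists of isolated points in a compact set, hence is finite. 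The perturbed edges stay properly embedded in their strips, remain disjoint from one another, and keep the incidences and cyclic orders. Lemma~\ref{lem:ribbon-neighbourhood-isotopy} therefore applies and gives condition 1 and condition 4. Since edgewise $C^1$ closeness implies $d_1$-closeness, condition 2 follows.

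\emph{Conclusion.} With conditions 1, 2 and 4 in hand, condition 3 holds by construction. Subdividing at the finitely many intersection points and at the corners makes $K\cup\Gamma(\widetilde L)$ a finite embedded graph. Regularity of $\widetilde L$ is inherited from the ribbon-type equivalence.

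The main obstacle is the base point. There $o\in K$ is shared and fixed, so general position cannot displace it, and intersections might accumulate at $o$ or produce common arcs there. The germ rotation in Step 1 handles this: it keeps common intersection confined to $\{o\}$ inside a small disk around $o$. Away from that disk, standard general position applies.
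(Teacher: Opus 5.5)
Your smooth-category route is a genuinely different proof from the paper's, and it works up to some bookkeeping. You move the vertices off $K$, rotate the arms at $o$ so that their tangent directions avoid those of $K$, make each edge transverse to the smooth arcs of $K$ by a $C^1$-small perturbation inside its own strip, and certify the type with Lemma~\ref{lem:ribbon-neighbourhood-isotopy}.

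The paper does not straighten anything it must keep. It takes a smooth triangulation in which $K$ is already a subcomplex (Lemma~\ref{lem:fine-relative-triangulation}), approximates only $\Gamma(L)$ by a PL graph $P$ in ribbon position, and applies \cite[Theorem~5.3]{RourkeSandersonPL} with $P_0=\{o\}$. Since $u(P)\cap Q$ is a compact polyhedron whose part off $o$ has dimension $\le 0$, it is finite. So the base point, vertices and corners need no separate treatment. The price is that $d_1$-control of the PL vertex moves must be obtained via \cite[Proposition~1.2.16]{Lev10}.

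Your route replaces that black box with elementary transversality, and $C^1$-smallness gives $d_1$-control directly. The cost is that you must handle $o$ by hand. Your germ rotation, which is a single diffeomorphism $C^1$-close to the identity, together with the cone argument does this correctly.

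Three points need fixing.

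\emph{The PL variant.} You straighten the edges of $K\cap R$. That is not allowed: the conclusion concerns the given $K$, which in the proof of Proposition~\ref{prop:existence-universally-stable-sequence} is the already constructed $K_{n-1}$. Finite intersection with a straightened copy says nothing about $K$ itself. Pulling back by a straightening homeomorphism loses piecewise smoothness and $d_1$ control. The remedy is the paper's: choose the PL structure so that $K$ is already PL.

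\emph{Arms inside vertex disks.} In the smooth variant you perturb only inside edge strips, so the arms inside a vertex disk keep whatever contact with $K$ they had. Moving the vertex itself off $K$ does not remove a common arc elsewhere in the disk.

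\emph{Corners of $L$ on $K$.} Corners of edges of $L$ lying on $K$ are fixed endpoints of your smooth pieces. Nothing in your argument prevents intersections from accumulating there.

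To repair the smooth variant, treat corners like vertices in Step 1: move them off $K$, or round them, which is $d_1$-small. Then re-choose the vertex disks so that they miss $K$; at $o$, take a disk small enough that the arms of $L$ meet $K$ only at $o$. With this, transversality already holds near both ends of every strip, the relative transversality theorem applies, and compactness gives finiteness as you say.
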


\begin{proof}
Choose closed ribbon neighbourhoods
\[
 \Gamma(L)\subset\operatorname{int}(R_0)
 \subset R_0\subset\operatorname{int}(R)
 \subset R\Subset\operatorname{int}(\Sigma)
\]
with $|R|<\varepsilon$. By Lemma~\ref{lem:fine-relative-triangulation}, choose a smooth triangulation in which $K$ is a subcomplex. After a sufficiently fine subdivision, approximate the marked edges of $\Gamma(L)$, inside the prescribed vertex disks and edge strips of $R_0$, by a marked PL embedded graph $P$ in the induced PL structure. We keep $o$ fixed, preserve all incidences and cyclic orders, and choose the approximation so close that the associated loop family $L^{\mathrm{PL}}$ satisfies
\[
 \max_i d_1(\ell_i^{\mathrm{PL}},\ell_i)<\frac{\varepsilon}{2}.
\]
The approximation is chosen in the ribbon position described in Lemma~\ref{lem:ribbon-neighbourhood-isotopy}; hence that lemma realizes it by an ambient isotopy supported in $\operatorname{int}(R_0)$, fixing $o$ and $\partial\Sigma$ pointwise.

Let $M=\operatorname{int}(R)$ with the induced PL structure. This is a PL surface without boundary. Choose a compact polyhedral neighbourhood $S$ of $P$ such that
\[
 P\subset\operatorname{int}(S)\subset S\subset\operatorname{int}(R_0).
\]
After refining the triangulation sufficiently, we may assume that $S$ is a finite subpolyhedron and that the union $Q$ of all simplices of the one-dimensional subcomplex $K$ which meet $S$ is contained in $\operatorname{int}(R)$. Then $Q$ is a finite subpolyhedron of $M$ and contains $K\cap S$.
Apply the general-position theorem for embeddings \cite[Theorem~5.3]{RourkeSandersonPL} in $M$ with
\[
 P^1=P,\qquad P_0=\{o\},\qquad Q^1=Q,
 \qquad m=2.
\]
Choose the size of the isotopy smaller than the distance from $P$ to $M\setminus\operatorname{int}(S)$. The theorem gives a compactly supported ambient PL isotopy of $M$, fixed at $o$, whose final homeomorphism $u$ satisfies
\[
 \dim\bigl(u(P\setminus\{o\})\cap Q\bigr)\le 1+1-2=0.
\]
The isotopy is supported away from $\partial R$, so it extends by the identity to an ambient isotopy of $\Sigma$, supported in $R$ and fixing $\partial\Sigma$ pointwise. Moreover, $u(P)\subset S$. Since $Q$ contains $K\cap S$, the intersection $u(P)\cap K$ is a compact zero-dimensional polyhedron together with the possible point $o$, and is therefore finite. In particular, $u(P)$ and $K$ have no common nontrivial arc.

For completeness, the proof of \cite[Theorem~5.3]{RourkeSandersonPL} produces the isotopy by moving finitely many vertices of a subdivision and extending conically over their stars. The vertex displacements may be chosen arbitrarily small. For each of the finitely many marked PL paths, both the uniform distance and the lengths vary continuously with these vertex positions. Proposition~1.2.16 of \cite{Lev10} therefore implies that the final perturbation may be chosen so that
\[
 \max_i d_1\bigl(u(\ell_i^{\mathrm{PL}}),\ell_i^{\mathrm{PL}}\bigr)
 <\frac{\varepsilon}{2}.
\]
Set $\widetilde L=u(L^{\mathrm{PL}})$. The triangle inequality gives the required $d_1$ estimate. The composition of the two ambient isotopies witnesses that $\widetilde L$ has the same marked ambient ribbon type as $L$; it fixes $o$ and $\partial\Sigma$ pointwise and is supported in $R$, whose area is smaller than $\varepsilon$. After subdividing at the finitely many intersection points and at all corners, $K\cup\Gamma(\widetilde L)$ is a finite embedded graph.
\end{proof}

Before giving the construction, let us describe the diagonal argument. Enumerate the countable dense family repeatedly, so that every one of its elements is presented infinitely many times. At stage $n$, we perturb the $n$-th family slightly so that it has only finitely many intersections with what has already been constructed, and we permanently add it to the graph $K_n$. Hence, once a loop family has been inserted, it remains carried by every subsequent $K_n$.

Now fix an arbitrary regular family $L$. Choose countable-model families $L^{(j)}$ approaching $L$ with error tending to zero. Since every model occurs infinitely often in the enumeration, we may choose increasing stages $m_j$ at which $L^{(j)}$ is inserted, and moreover choose them so late that the additional general-position perturbation is also arbitrarily small. Between stages $m_j$ and $m_{j+1}$, we simply use the copy inserted at stage $m_j$ as the approximation of $L$. This produces a stable approximation along the \emph{entire} graph sequence, not merely along a subsequence.

\begin{proof}[Proof of Proposition~\ref{prop:existence-universally-stable-sequence}]
Let $\mathscr D$ be given by Lemma~\ref{lem:countable-dense-regular-families}. Enumerate it with repetitions,
\[
 \mathscr D=\{\overline L^{\,1},\overline L^{\,2},\ldots\},
\]
so that every element occurs infinitely often, and fix $\delta_n\downarrow0$.

We construct finite embedded graphs
\[
 K_0\subset K_1\subset K_2\subset\cdots
\]
and perturbed families $\widetilde L^{\,n}$. Let $K_0$ be the union of $o$ with a fixed finite graph structure on $\partial\Sigma$. Given $K_{n-1}$, apply Lemma~\ref{lem:relative-general-position-graphs} to $K_{n-1}$, $\overline L^{\,n}$ and $\delta_n$. Define
\[
 K_n:=K_{n-1}\cup\Gamma(\widetilde L^{\,n}),
\]
subdividing at every intersection point, vertex and corner. This is a finite embedded graph. Apply Lemma~\ref{lem:fine-relative-triangulation} to $K_n$ with $\varepsilon=n^{-1}$, call the triangulation $\mathcal T_n$, and put $\Gbb_n=\mathcal T_n^{(1)}$. Then
\[
 \mesh(\Gbb_n)\le n^{-1},
\]
and the first three assertions hold.

Fix a regular family $L=(\ell_1,\ldots,\ell_r)$. For every $j\ge1$, choose from $\mathscr D$ a family $L^{(j)}$ and an ambient isotopy $g_j$ as in Lemma~\ref{lem:countable-dense-regular-families}, with error $j^{-1}$. Thus $g_j$ witnesses marked ambient equivalence between $L$ and $L^{(j)}$, the $d_1$ error is less than $j^{-1}$, and corresponding component areas differ by less than $j^{-1}$.

Choose strictly increasing indices $m_j$ such that
\[
 \overline L^{\,m_j}=L^{(j)},
 \qquad
 \delta_{m_j}<j^{-1}.
\]
Let $q_j$ be the ambient isotopy supplied by the general-position lemma from $L^{(j)}$ to $\widetilde L^{\,m_j}$. For $n<m_1$, let $L_n$ consist of $r$ copies of the constant loop at $o$. If $m_j\le n<m_{j+1}$, put
\[
 L_n:=\widetilde L^{\,m_j}.
\]
It is carried by $\Gbb_n$, because
\[
 \Gamma(\widetilde L^{\,m_j})
 \subset K_{m_j}\subset K_n\subset\Gbb_n.
\]
Subdivided edges are read as concatenations of subedges.

For $m_j\le n<m_{j+1}$,
\[
\begin{aligned}
 \max_i d_1(\ell_{i,n},\ell_i)
 &\le
 \max_i d_1(\widetilde\ell_i^{\,m_j},\ell_i^{(j)})
 +
 \max_i d_1(\ell_i^{(j)},\ell_i) \\
 &<\delta_{m_j}+j^{-1}<2j^{-1}.
\end{aligned}
\]
The composition $h_n=q_j\circ g_j$ witnesses that $L_n$ and $L$ have the same marked ambient ribbon type. If $C_1,\ldots,C_m$ are the components for $L$ and $C_{k,n}=h_n(C_k)$, then
\[
 \bigl||C_{k,n}|-|C_k|\bigr|
 <j^{-1}+\delta_{m_j}\longrightarrow0.
\]
Thus $(L_n)$ is a stable approximation of $L$. Since $L$ was arbitrary, $(\Gbb_n)$ is universally stable.
\end{proof}

\bibliographystyle{alpha}
\bibliography{Biblio}

@article {BlauThom92,
    AUTHOR = {Blau, Matthias and Thompson, George},
     TITLE = {Quantum {Y}ang-{M}ills theory on arbitrary surfaces},
   JOURNAL = {Internat. J. Modern Phys. A},
  FJOURNAL = {International Journal of Modern Physics A. Particles and
              Fields. Gravitation. Cosmology. Astrophysics. Accelerator
              Physics},
    VOLUME = {7},
      YEAR = {1992},
    NUMBER = {16},
     PAGES = {3781--3806},
      ISSN = {0217-751X,1793-656X},
   MRCLASS = {81T40 (58D30 58Z05 81S40 81T13)},
  MRNUMBER = {1169172},
MRREVIEWER = {Sylvie\ Paycha},
       DOI = {10.1142/S0217751X9200168X},
       URL = {https://doi.org/10.1142/S0217751X9200168X},
}

@misc{BCDRT26,
      title={The {Y}ang--{M}ills measure on surfaces via Morse theory}, 
      author={Chhaibi, Reda and  Dang, Nguyen Viet and Guedes Bonthonneau, Yannick and  Rivière, Gabriel and Tô, Tat Dat },
      year={2026},
      eprint={2607.24640},
      archivePrefix={arXiv},
      primaryClass={math.PR},
      url={https://arxiv.org/abs/2607.24640},
      note={arXiv:2607.24640}
}

@article {CCHS22,
    AUTHOR = {Chandra, Ajay and Chevyrev, Ilya and Hairer, Martin and Shen,
              Hao},
     TITLE = {Langevin dynamic for the 2{D} {Y}ang-{M}ills measure},
   JOURNAL = {Publ. Math. Inst. Hautes \'{E}tudes Sci.},
  FJOURNAL = {Publications Math\'{e}matiques. Institut de Hautes \'{E}tudes
              Scientifiques},
    VOLUME = {136},
      YEAR = {2022},
     PAGES = {1--147},
      ISSN = {0073-8301,1618-1913},
   MRCLASS = {81T08 (58J35 60H15 60J25 81T13)},
  MRNUMBER = {4517645},
MRREVIEWER = {Jiang\ Lun\ Wu},
       DOI = {10.1007/s10240-022-00132-0},
       URL = {https://doi.org/10.1007/s10240-022-00132-0},
}

@article {Chev19,
    AUTHOR = {Chevyrev, Ilya},
     TITLE = {Yang-{M}ills measure on the two-dimensional torus as a random
              distribution},
   JOURNAL = {Comm. Math. Phys.},
  FJOURNAL = {Communications in Mathematical Physics},
    VOLUME = {372},
      YEAR = {2019},
    NUMBER = {3},
     PAGES = {1027--1058},
      ISSN = {0010-3616,1432-0916},
   MRCLASS = {53C05 (53C29 60H30)},
  MRNUMBER = {4034782},
       DOI = {10.1007/s00220-019-03567-5},
       URL = {https://doi.org/10.1007/s00220-019-03567-5},
}

@article {CKM,
    AUTHOR = {Chevyrev, Ilya and Klose, Tom and  Mohamed, Abdulwahab},
     TITLE = {A PDE approach to the 2D Yang-Mills measure},
       year={2026},
      eprint={2607.22236},
      archivePrefix={arXiv},
      primaryClass={math.AP},
      url={https://arxiv.org/abs/2607.22236},
      note={arXiv:2607.22236},
}

@article {ChevGar25,
    AUTHOR = {Chevyrev, Ilya and Garban, Christophe},
     TITLE = {Villain action in lattice gauge theory},
   JOURNAL = {J. Stat. Phys.},
  FJOURNAL = {Journal of Statistical Physics},
    VOLUME = {192},
      YEAR = {2025},
    NUMBER = {3},
     PAGES = {Paper No. 38, 15},
      ISSN = {0022-4715,1572-9613},
   MRCLASS = {60D05 (05C20 81T13 81T25)},
  MRNUMBER = {4876989},
MRREVIEWER = {Vladimir\ Balan},
       DOI = {10.1007/s10955-025-03420-1},
       URL = {https://doi.org/10.1007/s10955-025-03420-1},
}

@article {ChevShen26,
    AUTHOR = {Chevyrev, Ilya and Shen, Hao},
     TITLE = {Invariant measure and universality of the 2{D} {Y}ang-{M}ills
              {L}angevin dynamic},
   JOURNAL = {Comm. Pure Appl. Math.},
  FJOURNAL = {Communications on Pure and Applied Mathematics},
    VOLUME = {79},
      YEAR = {2026},
    NUMBER = {8},
     PAGES = {1973--2102},
      ISSN = {0010-3640,1097-0312},
   MRCLASS = {60H15 (60L30 81T13 81T27)},
  MRNUMBER = {5084118},
       DOI = {10.1002/cpa.70043},
       URL = {https://doi.org/10.1002/cpa.70043},
}

@misc{Dah26,
      title={{L}arge {N} limit of {W}ilson {L}oops on orientable closed surfaces in the light of {K}oike-{S}chur-{W}eyl duality and {S}pin Networks}, 
      author={Antoine Dahlqvist},
      year={2026},
      eprint={2603.11374},
      archivePrefix={arXiv},
      primaryClass={math.PR},
      url={https://arxiv.org/abs/2603.11374},
      note={arXiv:2603.11374},
}

@misc{DangNohra26,
      title={The {Y}ang--{M}ills measure on compact surfaces as a universal scaling limit of lattice gauge models}, 
      author={Nguyen Viet Dang and Elias Nohra},
      year={2026},
      eprint={2602.08591},
      archivePrefix={arXiv},
      primaryClass={math.PR},
      url={https://arxiv.org/abs/2602.08591},
      note={arXiv:2602.08591}
}

@misc{DangNohra26b,
      title={Semiclassical analysis for Yang--Mills random connections on compact surfaces}, 
      author={Nguyen Viet Dang and Elias Nohra},
      year={2026},
      eprint={2607.19037},
      archivePrefix={arXiv},
      primaryClass={math.PR},
      url={https://arxiv.org/abs/2607.19037}, 
      note={arXiv:2607.19037},
}

@article {Dri89,
    AUTHOR = {Driver, Bruce K.},
     TITLE = {Y{M{${}_2$}}: continuum expectations, lattice convergence, and
              lassos},
   JOURNAL = {Comm. Math. Phys.},
  FJOURNAL = {Communications in Mathematical Physics},
    VOLUME = {123},
      YEAR = {1989},
    NUMBER = {4},
     PAGES = {575--616},
      ISSN = {0010-3616,1432-0916},
   MRCLASS = {81T08 (81T13 81T25)},
  MRNUMBER = {1006295},
MRREVIEWER = {Edward\ P.\ Osipov},
       URL = {http://projecteuclid.org/euclid.cmp/1104178984},
}

@article {GKS89,
    AUTHOR = {Gross, Leonard and King, Christopher and Sengupta, Ambar},
     TITLE = {Two-dimensional {Y}ang-{M}ills theory via stochastic
              differential equations},
   JOURNAL = {Ann. Physics},
  FJOURNAL = {Annals of Physics},
    VOLUME = {194},
      YEAR = {1989},
    NUMBER = {1},
     PAGES = {65--112},
      ISSN = {0003-4916,1096-035X},
   MRCLASS = {81E08 (60G15 60H10 81E13 81E40)},
  MRNUMBER = {1015789},
MRREVIEWER = {Bruce\ K.\ Driver},
       DOI = {10.1016/0003-4916(89)90032-8},
       URL = {https://doi.org/10.1016/0003-4916(89)90032-8},
}

@article{Lem26b,
      title={Universal dualities for {W}ilson loops in lattice {Y}ang-{M}ills}, 
      author={Thibaut Lemoine},
      year={2026},
      eprint={2604.16252},
      archivePrefix={arXiv},
      primaryClass={math-ph},
      url={https://arxiv.org/abs/2604.16252},
      note={arXiv:2604.16252},
}

@misc{Lem26c,
      title={The heat-kernel master field on $\mathbb{Z}^d$ at strong coupling}, 
      author={Thibaut Lemoine},
      year={2026},
      eprint={2606.28945},
      archivePrefix={arXiv},
      primaryClass={math-ph},
      url={https://arxiv.org/abs/2606.28945}, 
      note={arXiv:2606.28945},
      
}

@article {Lev03,
    AUTHOR = {L{\'{e}}vy, Thierry},
     TITLE = {Yang-{M}ills measure on compact surfaces},
   JOURNAL = {Mem. Amer. Math. Soc.},
  FJOURNAL = {Memoirs of the American Mathematical Society},
    VOLUME = {166},
      YEAR = {2003},
    NUMBER = {790},
     PAGES = {xiv+122},
      ISSN = {0065-9266,1947-6221},
   MRCLASS = {58D20 (60F20 60G60 60H40 81T13 81T27)},
  MRNUMBER = {2006374},
MRREVIEWER = {Ambar\ N.\ Sengupta},
       DOI = {10.1090/memo/0790},
       URL = {https://doi.org/10.1090/memo/0790},
}

@article {Lev04,
    AUTHOR = {L{\'{e}}vy, Thierry},
     TITLE = {Wilson loops in the light of spin networks},
   JOURNAL = {J. Geom. Phys.},
  FJOURNAL = {Journal of Geometry and Physics},
    VOLUME = {52},
      YEAR = {2004},
    NUMBER = {4},
     PAGES = {382--397},
      ISSN = {0393-0440,1879-1662},
   MRCLASS = {81T13 (20C35 53C80 81R05)},
  MRNUMBER = {2098832},
MRREVIEWER = {Ambar\ N.\ Sengupta},
       DOI = {10.1016/j.geomphys.2004.04.003},
       URL = {https://doi.org/10.1016/j.geomphys.2004.04.003},
}

@article {Lev10,
    AUTHOR = {L{\'{e}}vy, Thierry},
     TITLE = {Two-dimensional {M}arkovian holonomy fields},
   JOURNAL = {Ast\'{e}risque},
  FJOURNAL = {Ast\'{e}risque},
    NUMBER = {329},
      YEAR = {2010},
     PAGES = {172},
      ISSN = {0303-1179,2492-5926},
      ISBN = {978-2-85629-283-9},
   MRCLASS = {58J65 (53C29 57M15 57R56 58D20 60J60)},
  MRNUMBER = {2667871},
MRREVIEWER = {Ambar\ N.\ Sengupta},
}

@misc{Lev26,
      title={A combinatorial formula for {W}ilson loop expectations on compact surfaces}, 
      author={L{\'{e}}vy, Thierry},
      year={2026},
      eprint={2603.08509},
      archivePrefix={arXiv},
      primaryClass={math.PR},
      url={https://arxiv.org/abs/2603.08509},
      note={arXiv:2603.08509},
}

@article {Liu96,
    AUTHOR = {Liu, Kefeng},
     TITLE = {Heat kernel and moduli space},
   JOURNAL = {Math. Res. Lett.},
  FJOURNAL = {Mathematical Research Letters},
    VOLUME = {3},
      YEAR = {1996},
    NUMBER = {6},
     PAGES = {743--762},
      ISSN = {1073-2780},
   MRCLASS = {58G11 (32G08 32G13)},
  MRNUMBER = {1426532},
MRREVIEWER = {Philippe\ P.\ Eyssidieux},
       DOI = {10.4310/MRL.1996.v3.n6.a3},
       URL = {https://doi.org/10.4310/MRL.1996.v3.n6.a3},
}

@article {OeckPfei01,
    AUTHOR = {Oeckl, Robert and Pfeiffer, Hendryk},
     TITLE = {The dual of pure non-abelian lattice gauge theory as a spin
              foam model},
   JOURNAL = {Nuclear Phys. B},
  FJOURNAL = {Nuclear Physics. B. Theoretical, Phenomenological, and
              Experimental High Energy Physics. Quantum Field Theory and
              Statistical Systems},
    VOLUME = {598},
      YEAR = {2001},
    NUMBER = {1-2},
     PAGES = {400--426},
      ISSN = {0550-3213,1873-1562},
   MRCLASS = {81T25 (83C27)},
  MRNUMBER = {1818475},
MRREVIEWER = {Daniele\ Oriti},
       DOI = {10.1016/S0550-3213(00)00770-7},
       URL = {https://doi.org/10.1016/S0550-3213(00)00770-7},
}

@article {Rus90,
    AUTHOR = {Rusakov, B. Ye.},
     TITLE = {Loop averages and partition functions in {${\rm U}(N)$} gauge
              theory on two-dimensional manifolds},
   JOURNAL = {Modern Phys. Lett. A},
  FJOURNAL = {Modern Physics Letters A. Particles and Fields, Gravitation,
              Cosmology, Astrophysics, Nuclear Physics, Accelerator Physics,
              Quantum Information},
    VOLUME = {5},
      YEAR = {1990},
    NUMBER = {9},
     PAGES = {693--703},
      ISSN = {0217-7323,1793-6632},
   MRCLASS = {81T13 (81T20 81T25)},
  MRNUMBER = {1051372},
MRREVIEWER = {E.\ Wieczorek},
       DOI = {10.1142/S0217732390000780},
       URL = {https://doi.org/10.1142/S0217732390000780},
}

@article {Sen97,
    AUTHOR = {Sengupta, Ambar},
     TITLE = {Gauge theory on compact surfaces},
   JOURNAL = {Mem. Amer. Math. Soc.},
  FJOURNAL = {Memoirs of the American Mathematical Society},
    VOLUME = {126},
      YEAR = {1997},
    NUMBER = {600},
     PAGES = {viii+85},
      ISSN = {0065-9266,1947-6221},
   MRCLASS = {58D20 (81S20 81T13)},
  MRNUMBER = {1346931},
MRREVIEWER = {Dana\ S.\ Fine},
       DOI = {10.1090/memo/0600},
       URL = {https://doi.org/10.1090/memo/0600},
}

@article {Sen97limit,
    AUTHOR = {Sengupta, Ambar},
     TITLE = {Yang-{M}ills on surfaces with boundary: quantum theory and
              symplectic limit},
   JOURNAL = {Comm. Math. Phys.},
  FJOURNAL = {Communications in Mathematical Physics},
    VOLUME = {183},
      YEAR = {1997},
    NUMBER = {3},
     PAGES = {661--705},
      ISSN = {0010-3616,1432-0916},
   MRCLASS = {58D20 (53C07 58D27 81T13)},
  MRNUMBER = {1462231},
MRREVIEWER = {Dana\ S.\ Fine},
       DOI = {10.1007/s002200050047},
       URL = {https://doi.org/10.1007/s002200050047},
}

@article {Wit91,
    AUTHOR = {Witten, Edward},
     TITLE = {On quantum gauge theories in two dimensions},
   JOURNAL = {Comm. Math. Phys.},
  FJOURNAL = {Communications in Mathematical Physics},
    VOLUME = {141},
      YEAR = {1991},
    NUMBER = {1},
     PAGES = {153--209},
      ISSN = {0010-3616,1432-0916},
   MRCLASS = {58G26 (14D20 32G13 58D27 81T13)},
  MRNUMBER = {1133264},
MRREVIEWER = {Dana\ S.\ Fine},
       URL = {http://projecteuclid.org/euclid.cmp/1104248198},
}

@book{Mun66,
  author    = {Munkres, James R.},
  title     = {Elementary Differential Topology},
  series    = {Annals of Mathematics Studies},
  volume    = {54},
  publisher = {Princeton University Press},
  address   = {Princeton, NJ},
  year      = {1966}
}

@book{RourkeSandersonPL,
  author    = {Rourke, C. P. and Sanderson, B. J.},
  title     = {Introduction to Piecewise-Linear Topology},
  edition   = {Revised reprint of the 1972 original},
  series    = {Ergebnisse der Mathematik und ihrer Grenzgebiete},
  volume    = {69},
  publisher = {Springer},
  year      = {1982},
  isbn      = {978-3-540-11102-3},
  doi       = {10.1007/978-3-642-81735-9}
}

\end{document}